\documentclass[11pt,reqno,twoside]{amsart}
\usepackage{mathrsfs}
\usepackage{amsfonts,amssymb,amsmath,amsthm}
\usepackage{cite}
\usepackage[colorlinks=true,citecolor=red,linkcolor=blue]{hyperref}
\usepackage{titletoc}
\usepackage{geometry}
\usepackage{bm}
\usepackage{indentfirst}
\usepackage{graphicx}
\usepackage{stmaryrd}
\usepackage{tikz}
\usepackage{ulem}
\usepackage{color,soul}
\usepackage{setspace}
\usepackage[pagewise]{lineno}
\usepackage{float}
\usepackage{todonotes}

\def\R {\mathbb{R}}
\def\N {\mathbb{N}}
\def\Z {\mathbb{Z}}
\def\d{{\,\rm d}}
\numberwithin{equation}{section}
\newtheorem{theorem}{Theorem}[section]
\newtheorem{lemma}[theorem]{Lemma}
\newtheorem{corollary}[theorem]{Corollary}
\newtheorem{proposition}[theorem]{Proposition}

\newtheorem{remark}[theorem]{Remark}
\theoremstyle{definition}
\newtheorem{example}[theorem]{Example}
\newtheorem{definition}[theorem]{Definition}

\newcommand{\supp}{\operatorname{supp}}

\tikzstyle{idea} = [rectangle, rounded corners, minimum width=2cm, minimum height=1cm, text centered, draw=black, align=center]
\tikzstyle{process} = [rectangle, minimum width=3cm, minimum height=1cm, text centered, draw=black, align=center]
\tikzstyle{point} = [coordinate, on grid]
\tikzstyle{arrow} = [thick,->,>=stealth]
\tikzstyle{dasharrow} = [dashed,->,>=stealth]

\makeatletter
\@namedef{subjclassname@2020}{\textup{2020} Mathematics Subject Classification}
\makeatother

\title[Logarithmic Ultra-analyticity]{Sharp Logarithmic Ultra-analyticity for Fractional and Nonlocal Elliptic Equations}

\author[H. Dong]{Hongjie Dong}
\author[Y. Hu]{Yeyao Hu}
\author[M. Wang]{Ming Wang}

\address[H. Dong]{Division of Applied Mathematics, Brown University, 182 George Street, Providence, RI 02912, USA}
\email{Hongjie\_Dong@brown.edu}

\address[Yeyao Hu]{School of Mathematics and Statistics, HNP-LAMA, Central South University, Changsha, Hunan 410083, P.R. China}
\email{huyeyao@gmail.com}

\address[Ming Wang]{School of Mathematics and Statistics, HNP-LAMA, Central South University, Changsha, Hunan 410083, P.R. China}
\email{m.wang@csu.edu.cn}

\begin{document}

       \subjclass[2020]{35R11; 
       35B65; 
       35A20; 
        42B37.
        }
	
        \keywords{Fractional Laplacian, nonlocal elliptic equations, ultra-analyticity, Fourier multipliers}

    \vspace{5mm}

\begin{abstract}
It is well known that solutions of elliptic equations inherit analyticity from analytic coefficients,
while much less is understood about the inheritance of ultra-analytic regularity,
especially for nonlocal equations.
This paper develops a systematic Fourier-analytic framework to study fractional 
and more general nonlocal pure-potential equations whose potentials satisfy 
ultra-analytic derivative bounds.
We prove sharp quantitative logarithmic ultra-analytic estimates for normalized solutions,
and show that both the logarithmic power and the leading constant involving the fractional exponent are optimal 
in natural periodic model examples.
We also establish a general transfer principle for weighted ultra-analytic scales,
which reveals why standard scales are not preserved,
and singles out a natural family of invariant ultra-analytic spaces.
\end{abstract}

    \maketitle

     \setcounter{tocdepth}{2}

\section{Introduction and main results}

\subsection{Background and scope}

The analyticity of solutions to elliptic equations with analytic coefficients is a classical theme in elliptic regularity theory; see Morrey--Nirenberg \cite{MorreyNirenberg57} and H\"ormander \cite{Hoermander85}.  Elliptic regularity also extends to Gevrey classes. Under corresponding coefficient assumptions, solutions enjoy the same Gevrey order \cite{Rodino93}. The analytic and Gevrey theories thus provide a classical same‑scale inheritance paradigm, wherein regularity assumptions on the coefficients are transferred to solutions within the same regularity scale.

In the present paper we are concerned with a stronger and much less developed regime, namely ultra-analytic regularity.  A typical quantitative form is Fourier decay faster than exponential,
\[
|\widehat f(\xi)|\lesssim e^{-c|\xi|^\rho},\qquad \rho>1,
\]
which corresponds, up to harmless changes of constants, to estimates of the form
\begin{align}\label{equ-ultra-f-gamma}
	\|D^\alpha f\|_{L^p}
	\le C A^{|\alpha|}(\alpha!)^\kappa,
	\qquad \kappa=\rho^{-1}<1 .
\end{align}
The central issue is whether the same-scale inheritance paradigm continues to hold in this ultra-analytic regime, and if not, what scale should replace it.  This leads to the guiding question of this paper:
\begin{quote}
	If the coefficient \(V\) in an elliptic or nonlocal equation is ultra-analytic, how much ultra-analyticity can a normalized solution inherit?
\end{quote}

We answer this question for pure-potential fractional and nonlocal equations
\[
\Lambda^s u=Vu,\qquad \varphi(D)u=Vu.
\]
Our main finding is that the classical same-scale inheritance principle breaks down in the ultra-analytic regime.  Even when the potential satisfies ultra-analytic derivative bounds, a normalized solution generally inherits only a logarithmically renormalized ultra-analytic scale.  This renormalized scale is sharp in natural periodic model examples and is dictated by the growth of the underlying Fourier multiplier.

This logarithmic ultra-analytic preservation problem was initiated in the second order elliptic setting by Dong and Wang \cite{DongWang24}.  The present paper systematically develops its fractional and nonlocal counterpart by a different Fourier-analytic method.  The contribution of the paper has three parts. 

\begin{itemize}
    \item {\it Fractional $L^p$ theory}. For fractional pure-potential equations, ultra-analytic coefficient bounds imply a sharp logarithmically renormalized solution scale rather than preservation of the original coefficient scale. 
    
\item {\it General multipliers}. For general admissible radial Fourier multipliers, the inherited scale is determined by the growth of the multiplier.  

\item {\it Invariant classes}. For general weighted ultra-analytic classes, the equation acts on the defining scale through a logarithmic transfer law: for the fractional pure-potential equation $\Lambda^s u=Vu$, a coefficient scale $L_m$ is transferred into the solution scale $L_{\lfloor c\log(m+e)\rfloor}$.  This leads to natural invariant ultra-analytic spaces, namely those whose defining weights are stable under the logarithmic renormalization $m\mapsto\log m$.
\end{itemize}

\subsection{Fractional logarithmic ultra-analytic estimates}
We first formulate the coefficient-side ultra-analytic assumption used in the fractional model case.
We assume  
\begin{align}\label{assump-A-derivative}
 \|D^\alpha V\|_{L^\infty(\mathbb R^n)}
 \le A_0M^{|\alpha|}(\alpha!)^\kappa,
 \qquad \alpha\in\mathbb N^n,
\end{align}
where $A_0,M>0$ and $0\le\kappa<1$.
The case $\kappa=0$ includes trigonometric and quasi-periodic potentials, 
such as $V(x)=\sum c_j e^{i\lambda_j\cdot x}$ and Mathieu-type $a+b\cos 2x$, 
which are ubiquitous in the spectral theory of Schr\"odinger operators 
\cite{Bourgain05,KarpeshinaParnovskiShterenberg26}.
For $0<\kappa<1$, \eqref{assump-A-derivative} corresponds to 
faster-than-exponential Fourier decay, and related ultra-analytic classes 
arise in the study of spectral gaps for Hill operators 
\cite{DjakovMityaginEntire2003,Poschel2011HillWeighted,Trubowitz1977}.

The first result reads as follows.

\begin{theorem}[Fractional $L^p$ theory]\label{thm-fractional-Lp}
Let $s>0, 0\leq \kappa<1$, $1\le p\le\infty$, and let $u\in L^p(\mathbb R^n)$ be a distributional solution of
\[\Lambda^s u=Vu,\qquad \|u\|_{L^p(\mathbb R^n)}=1.\]
Assume \eqref{assump-A-derivative}.  Then for every $\theta>1$ there exist constants $C_\theta,K_\theta>0$, depending only on $s,n,\kappa,p$ and $\theta$, such that for every multi-index $\alpha\in\mathbb N^n$,
\begin{align}\label{eq-main-fractional-Lp}
 \|D^\alpha u\|_{L^p(\mathbb R^n)}
 \le
 C_\theta
 \left[
 \theta M\left(\frac{1-\kappa}{s}\right)^{1-\kappa}
 + K_\theta(1+A_0^{1/s})^{n+1}
 \right]^{|\alpha|}
 \frac{|\alpha|!}
 {\big(\log(|\alpha|+e)\big)^{(1-\kappa)|\alpha|}}.
\end{align}
\end{theorem}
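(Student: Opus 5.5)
Our plan is to work on the Fourier side through a Littlewood--Paley/frequency-cutoff decomposition and to iterate the identity $u=\Lambda^{-s}(Vu)$ on high frequencies. Fix a cutoff radius $R\ge 1$, to be chosen depending on $|\alpha|$, and write $u=P_{\le R}u+P_{>R}u$ with smooth Fourier projections. Here $\Lambda^s=(-\Delta)^{s/2}$ or $(1-\Delta)^{s/2}$; in either case $\Lambda^{-s}P_{>R}$ is an $L^p$-bounded multiplier with norm $\lesssim R^{-s}$ for every $1\le p\le\infty$, by a Mikhlin/kernel-integrability bound after rescaling. For the low-frequency part, Bernstein's inequality gives $\|D^\alpha P_{\le R}u\|_{L^p}\le C^{|\alpha|}R^{|\alpha|}$. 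Therefore everything hinges on how $D^\alpha$ acts on the high-frequency piece.

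\textbf{Step 1: iterating the equation.} We will use $P_{>R}u=\Lambda^{-s}P_{>R}(Vu)$ and Leibniz: $D^\alpha(Vu)=\sum_{\beta\le\alpha}\binom{\alpha}{\beta}D^{\beta}V\,D^{\alpha-\beta}u$. The derivative $D^\alpha$ is distributed so that each application of $\Lambda^{-s}P_{>R}$ gains $R^{-s}$. A cleaner route is to define $a_k=\sup_{|\alpha|=k}\|D^\alpha u\|_{L^p}$. The term $\beta=0$ gives $\|D^\alpha P_{>R}u\|\le CA_0R^{-s}a_k$, which is absorbable once $R^s\gtrsim A_0$; this produces the $(1+A_0^{1/s})$ factor, and the power $n+1$ comes from multinomial counting. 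The remaining terms give the recursive inequality
\[
a_k\le C^kR^k+\tfrac12a_k+CR^{-s}\sum_{j=1}^{k}\binom{k}{j}A_0M^j(j!)^\kappa a_{k-j}.
\]

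\textbf{Step 2: solving the recursion with an optimized $R$.} Substituting the ansatz $a_k\le CB^kk!/L_k^{k}$ with $L_k=(\log(k+e))^{1-\kappa}$, the $j$-th term becomes roughly
\[
R^{-s}(MB^{-1})^j(j!)^{\kappa-1}\,\frac{L_k^{k}}{L_{k-j}^{k-j}}.
\]
The key point is that $L_k^{k}/L_{k-j}^{k-j}\approx L_k^{j}$, while $(j!)^{\kappa-1}\approx (j/e)^{-(1-\kappa)j}$. So the sum is a series in $j$ whose terms are maximized near $j\sim \log k$-scales. Balancing then gives that $B$ must exceed $M$ times $((1-\kappa)/s)^{1-\kappa}$, up to the factor $\theta$. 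This comes from choosing $R$ with $R^s\approx k^{?}$ so that $R^k$ matches $k!/(\log k)^{(1-\kappa)k}$, which forces $R\approx k/(\tfrac{1}{s}\log k)^{1-\kappa}$ roughly. The constant $s$ enters because one gains $R^{-s}$ per iteration while $\log R\approx\log k$.

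\textbf{Main obstacle.} The delicate step is making the optimization sharp, to get exactly $\theta M((1-\kappa)/s)^{1-\kappa}$. A single-step absorption is too crude for this. Instead, we expect to iterate the equation $m\approx (1/s)\log k$ times rather than once. After $m$ iterations, the $j$ total derivatives are distributed over $m$ copies of $V$, which gives $(j_1!\cdots j_m!)^\kappa$. Equal splitting gains the factor $m^{-\kappa j}$, and combined with $R^{-sm}$ this yields the $(\log)^{-(1-\kappa)}$ per derivative. The exact constant then follows from Stirling applied to $\max_m m^{-(1-\kappa)j}\cdots$. We would handle this by induction on $k$ with $\theta>1$ providing slack to absorb the polynomial losses $C_\theta$. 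The residual constants, coming from multiplier norms and from counting multi-indices in $\mathbb N^n$, are collected into $K_\theta(1+A_0^{1/s})^{n+1}$.
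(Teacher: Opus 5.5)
Your Steps 1--2 already give a genuinely different and workable route: a \cite{DongWang24}-type induction on derivatives with a $k$-dependent cutoff. Your ``main obstacle'' paragraph, however, misdiagnoses the crux. The single-step recursion already yields the sharp constant, so you should drop the $m\approx\frac1s\log k$-fold iteration. As sketched it is not coherent: once the multinomial coefficients are counted, spreading $j$ derivatives over $m$ copies of $V$ costs a factor of order $m^{(1-\kappa)j}$ rather than gaining $m^{-\kappa j}$. In any case, the strong induction on $a_{k-j}$ already unrolls the equation as often as needed.

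To close the induction, put $\ell_k=(\log(k+e))^{1-\kappa}$ and $B=\theta M(\frac{1-\kappa}{s})^{1-\kappa}+K(1+A_0^{1/s})$. Assume $a_t\le C_0B^tt!/\ell_t^t$ for $t<k$, and take $R=R_k:=Bk/(4e\ell_k)$.
\begin{itemize}
\item Since $k!\ge(k/e)^k$, the Bernstein term is at most $2^{-k}B^kk!/\ell_k^k$.
\item The $\beta=0$ term is absorbable, because $A_0R_k^{-s}\le(4e)^sK^{-s}(\ell_k/k)^s$.
\item For $j\ge1$ you need more than $\ell_k^k/\ell_{k-j}^{k-j}\approx\ell_k^j$, since $j$ runs up to $k$. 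Write $t\log(\ell_k/\ell_t)$ with $t=k-j$ as an integral and split into $t\ge\sqrt k$ and $t<\sqrt k$. This gives $\ell_k^k/\ell_{k-j}^{k-j}\le(\ell_ke^{C/\log(k+e)})^j$ uniformly in $1\le j\le k$.
\item Set $y_k=\frac MB\ell_ke^{C/\log(k+e)}$, and write $y^j(j!)^{\kappa-1}=(1+\epsilon)^{-j}(z^j/j!)^{1-\kappa}$ with $z=((1+\epsilon)y)^{1/(1-\kappa)}$. Then
\[
\sum_{j=1}^{k}\Bigl(\frac MB\Bigr)^{j}(j!)^{\kappa-1}\frac{\ell_k^{k}}{\ell_{k-j}^{k-j}}
\le\frac1\epsilon\exp\Bigl((1-\kappa)(1+\epsilon)^{\frac1{1-\kappa}}y_k^{\frac1{1-\kappa}}\Bigr)
\le C_\theta(k+e)^{qs},
\qquad q=\Bigl(\frac{1+\epsilon}{\theta}\Bigr)^{\frac1{1-\kappa}}<1,
\]
precisely because $(1-\kappa)(M/B)^{1/(1-\kappa)}\le s\,\theta^{-1/(1-\kappa)}$.
\end{itemize}
Hence the $j\ge1$ terms contribute at most $C_\theta K^{-s}(\ell_k/k)^s(k+e)^{qs}$ times the target. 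This is small for $K=K_\theta$ large, uniformly in $k$ and $M$, and the induction closes. The sharp coefficient is exactly the threshold at which the growth $k^{(1-\kappa)(M/B)^{1/(1-\kappa)}}$ of this sum is beaten by the gain $R_k^{-s}\approx k^{-s}$. Bernstein and multiplier constants enter only as fixed factors absorbed by $K_\theta$.

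There are two further points to fix.
\begin{itemize}
\item Absorbing $\frac12a_k$ requires $a_k<\infty$, i.e.\ $u\in W^{k,p}$ for all $k$, which you never prove. A qualitative bootstrap of $u=P_{\le R}u+\Lambda^{-s}P_{>R}(Vu)$ in $B^t_{p,\infty}$ supplies it: there $\Lambda^{-s}P_{>R}$ gains $s$ derivatives and $V$ acts boundedly for all $1\le p\le\infty$. The paper meets the same issue and handles it with the truncated weights $|\xi|_\infty\wedge L$.
\item The power $n+1$ does not come from multinomial counting. Since $\sum_{\beta\le\alpha,|\beta|=j}\binom\alpha\beta=\binom{|\alpha|}j$ and $\beta!\le|\beta|!$, your route gives $K_\theta(1+A_0^{1/s})$ to the first power, which implies \eqref{eq-main-fractional-Lp}. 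In the paper the $n+1$ arises from the factor $(\sigma h_M)^{-n}$ when derivatives are extracted from the frequency-uniform norm.
\end{itemize}

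As for the comparison, the paper never inducts on derivatives. For every $\sigma>0$ it proves an a priori bound in an analytic frequency norm: $\mathcal G^\sigma_\infty$ for $p=2$, and $\mathcal U^\sigma_{p,h_M}$ otherwise. It uses the annular decay of $V$ and the Legendre-transform summation of Lemma~\ref{lem-lacunary-summation} to price multiplication by $V$ at $\exp((1+\varepsilon)(1-\kappa)(\sigma M)^{1/(1-\kappa)})$. It then absorbs high frequencies at the threshold $N_\sigma$, and only at the end extracts $D^\alpha$ at cost $m!/\sigma^m$ with $\sigma_m\simeq B_\theta^{-1}(\log(m+e))^{1-\kappa}$. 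Your sum $\sum_j y^j/(j!)^{1-\kappa}$ plays the role of that Legendre transform.

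Your argument is shorter and treats all $1\le p\le\infty$ at once, without frequency-uniform spaces or the auxiliary scale $h_M$. It also gives slightly better $A_0$-dependence for $p\ne2$. The paper's argument gives the stronger intermediate statement, exponential frequency decay for every $\sigma$. That is the form abstracted to general weights through $\Omega_{L,A}$ and $H_{L,A}$ in Section~\ref{sec-invariant-classes}.
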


The estimate should be read as a precise loss-of-scale statement.  If the potential has ultra-analytic type $M$ and order $\kappa<1$, then normalized solutions satisfy derivative bounds with denominator
\[
\log^{(1-\kappa)|\alpha|}(|\alpha|+e),
\]
and with leading type
\[
M\left(\frac{1-\kappa}{s}\right)^{1-\kappa},
\]
up to the arbitrarily small multiplicative loss represented by $\theta>1$.  Thus the fractional operator does not preserve the original coefficient scale; it transforms it into a logarithmic scale.

This logarithmic renormalization is sharp. Indeed, Proposition \ref{prop-periodic-sharp-leading-coefficient-fractional} constructs, for every $s>0$ and $0\le\kappa<1$, periodic examples of $\Lambda^s u=Vu$ with
\[\|V^{(m)}\|_{L^\infty(\mathbb T)}\le C_0M^m(m!)^\kappa,
\qquad m=0,1,2,\ldots,\]
for which the estimate obtained from \eqref{eq-main-fractional-Lp} fails if the leading coefficient $\left(\frac{1-\kappa}{s}\right)^{1-\kappa} M$ is replaced with $bM$, where $b<\left(\frac{1-\kappa}{s}\right)^{1-\kappa}$.  In the same construction, the denominator $\log^{(1-\kappa)m}(m+e)$ cannot be replaced with $\log^{(1-\kappa+\varepsilon)m}(m+e)$ for any $\varepsilon>0$.  Although the obstruction is constructed on $\mathbb T$, its periodic extension gives a bounded solution on $\mathbb R$ in the endpoint $p=\infty$ case; hence it rules out such uniform improvements of Theorem \ref{thm-fractional-Lp}.  Thus the fractional estimate has the correct logarithmic exponent and the correct leading coefficient in the large-type parameter $M$.

Let us compare Theorem \ref{thm-fractional-Lp} with the local elliptic result of Dong and Wang \cite{DongWang24}.  They studied elliptic equations with lower-order and gradient terms,
\[
\Delta u=W(x)\nabla u+V(x)u,
\]
where $W$ and $V$ are entire functions of exponential type at most $C_0$, their estimate has the form
\[
|D^\alpha u|_{L^p}
\le
\left(\theta C_0+K\left(1+\frac1{\log\theta}\right)\right)^{|\alpha|}
\frac{|\alpha|!}{\log^{|\alpha|}(|\alpha|+e)},
\qquad \theta>1.
\]
Theorem \ref{thm-fractional-Lp} provides a fractional pure-potential counterpart to this logarithmic phenomenon; the precise comparison is discussed in Remark \ref{remark-DW25}.

\subsection{Nonlocal logarithmic ultra-analytic estimates}

Our second theorem treats general radial Fourier multipliers.  Let
\[\varphi(D)f=\mathcal F^{-1}\big(\varphi(|\xi|)\widehat f(\xi)\big),\]
where \(\varphi:[0,\infty)\to[0,\infty)\) is nondecreasing, \(\varphi(r)>0\) for \(r>0\), and \(\varphi(r)\to\infty\).  The fractional case corresponds to $\varphi(r)=r^s$.  For a general multiplier $\varphi$ we write
\[L_\varphi(r):=\log(e+\varphi(r)).\]

\begin{theorem}[General Fourier multipliers]\label{thm-general-phi}
Let $1\le p\le\infty$.  Assume that $\varphi$ is an admissible radial multiplier in the sense of Definition \ref{def-admissible-varphi}.  Let $u\in L^p(\mathbb R^n)$ solve
\[\varphi(D)u=Vu,\qquad \|u\|_{L^p(\mathbb R^n)}=1.\]
Assume \eqref{assump-A-derivative}.  Then for every $\theta>1$ there exist constants $C_{\theta,A_0},K_{\theta,A_0}>0$, depending only on the admissibility constants of $\varphi$, $n,\kappa,p,\theta$ and $A_0$, such that
\begin{align}\label{eq-main-general-phi}
 \|D^\alpha u\|_{L^p(\mathbb R^n)}
 \le
 C_{\theta,A_0}
 \left[
 \theta(1-\kappa)^{1-\kappa} M
 + K_{\theta,A_0}
 \right]^{|\alpha|}
 \frac{|\alpha|!}
 {L_\varphi(|\alpha|+e)^{(1-\kappa)|\alpha|}}.
\end{align}
In particular, if $\varphi(r)=\log(e+r)$, then $L_\varphi(r)\simeq \log(e+\log(e+r))$ and
\begin{align}\label{eq-main-log-multiplier}
 \|D^\alpha u\|_{L^p}
 \le
 C_{\theta,A_0}
 \left[
 \theta(1-\kappa)^{1-\kappa} M
 +K_{\theta,A_0}
 \right]^{|\alpha|}
 \frac{|\alpha|!}
 {\big(\log(e+\log(e+|\alpha|))\big)^{(1-\kappa)|\alpha|}}.
\end{align}
\end{theorem}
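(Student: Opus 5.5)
We argue by a Littlewood--Paley splitting in frequency, iterating the equation to push mass from high to low frequencies. Fix a threshold $R\ge 1$, to be chosen depending on $|\alpha|=m$. Write $u=P_{\le R}u+P_{>R}u$, where $P_{\le R}$ is a smooth frequency cutoff to $|\xi|\le R$.

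\textbf{Low frequencies.} By Bernstein's inequality, uniformly in $1\le p\le\infty$,
\[
\|D^\alpha P_{\le R}u\|_{L^p}\lesssim (CR)^m .
\]

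\textbf{High frequencies.} On $\{|\xi|>R\}$ the symbol $\varphi(|\xi|)$ is bounded below by $\varphi(R)$, since $\varphi$ is nondecreasing and tends to infinity. Hence
\[
P_{>R}u=\varphi(D)^{-1}P_{>R}(Vu),
\]
and the admissibility conditions of Definition \ref{def-admissible-varphi} should give
\[
\|\varphi(D)^{-1}P_{>R}\|_{L^p\to L^p}\lesssim \varphi(R)^{-1}
\]
(a Mikhlin-type multiplier bound). Iterating $k$ times gives
\[
D^\alpha u\approx \big(\varphi(D)^{-1}V\big)^k D^\alpha u,
\]
plus low-frequency pieces. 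The derivatives are distributed by Leibniz's rule onto the $k$ copies of $V$, each copy controlled by \eqref{assump-A-derivative}. A cleaner route, which I will follow, is to work with the Fourier-side weight
\[
w(\xi)=e^{\tau|\xi|}.
\]
Assumption \eqref{assump-A-derivative} yields a measure-type bound on $\widehat V$ of the form
\[
\sum_{|\beta|=j}\frac{\|D^\beta V\|}{\beta!}\,r^j\lesssim A_0\exp\!\big(c(Mr)^{1/(1-\kappa)}\big).
\]
This is the optimized Stirling bound $\sup_j (Mr)^j (j!)^{\kappa-1}\approx\exp\big((1-\kappa)(Mr)^{1/(1-\kappa)}\big)$.

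\textbf{Key step.} The main estimate is
\[
\|D^\alpha u\|_{L^p}\le \sup_j\big(\text{growth of }\varphi(D)^{-1}\text{-gain}\big),
\]
which I organize as follows. Each application of $\varphi(D)^{-1}V$ moves frequency by at most the effective frequency scale $\rho$ of $V$, at a cost of $e^{c(M\rho)^{1/(1-\kappa)}}$, and gains a factor $\varphi(R)^{-1}$. Starting from frequency $|\xi|\sim N$, one needs about $N/\rho$ steps to reach frequencies $\le R$. This gives a Fourier tail bound for $u$ of the form
\[
\|P_{\sim N}u\|_{L^p}\lesssim \Big(\frac{e^{c(M\rho)^{1/(1-\kappa)}}}{\varphi(N)}\Big)^{N/\rho}.
\]
Optimizing in $\rho$ with $(M\rho)^{1/(1-\kappa)}\approx \tfrac{1}{?}\log\varphi(N)$ yields decay
\[
\exp\!\Big(-\frac{N}{M}(1-\kappa)^{-(1-\kappa)}\,L_\varphi(N)^{1-\kappa}\Big)\quad\text{up to }\theta .
\]
In the fractional case $\log\varphi(N)=s\log N$, which produces the factor $\big((1-\kappa)/s\big)^{1-\kappa}$. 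Finally, $\|D^\alpha u\|\le\sum_N N^m\|P_{\sim N}u\|$, and maximizing $N^m e^{-aN L_\varphi(N)^{1-\kappa}}$ over $N$ gives $N\approx m/L_\varphi(m)^{1-\kappa}$. This produces
\[
\frac{m!\,(C/a)^m}{L_\varphi(m+e)^{(1-\kappa)m}},
\]
and the $\theta$-loss and the constant $K_{\theta,A_0}$ absorb the lower-order contributions: the constant $A_0$, and the polynomial counting factors coming from admissibility, such as a doubling condition $\varphi(2r)\le C\varphi(r)$ that lets us replace $\log\varphi(N)$ by $L_\varphi(m)$.

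\textbf{Main obstacle.} The hardest step is the rigorous multi-step iteration in $L^p$ with $p\in\{1,\infty\}$, where Fourier methods are not directly available. Here I would use the Neumann-type expansion $u=\sum_k(\varphi(D)^{-1}P_{>R}V)^k(\text{low part})$ and bound each term by $\ell^1$-type (Wiener algebra or Besov $B^0_{p,1}$) multiplier norms. These norms are stable on $L^p$ for all $p$. The combinatorics of tracking frequency shifts, controlling how many of the $k$ factors of $V$ carry large frequency, must be done sharply enough to keep the leading constant $\theta(1-\kappa)^{1-\kappa}M$. I would aim for a generating-function bound in which the sum over $k$ is geometric once $R$ is chosen with $\varphi(R)\gg A_0$. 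This choice is where the dependence $K_\theta(1+A_0^{1/s})^{n+1}$ arises in the fractional case.
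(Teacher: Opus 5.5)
\textbf{There is a genuine gap, located at the step you call the key step.}

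\textbf{The per-step cost is wrong.} A frequency jump of size $\rho$ in one application of $\varphi(D)^{-1}V$ is paid for by the size of the annular piece of $V$ at radius $\rho$, which is $\lesssim A_0\exp(-\kappa(\rho/M)^{1/\kappa})$. It is not paid for by the growth factor $e^{c(M\rho)^{1/(1-\kappa)}}$. With your factor, the exponent $\frac N\rho\bigl[c(M\rho)^{1/(1-\kappa)}-\log\varphi(N)\bigr]$ tends to $-\infty$ as $\rho\to0$. At the value of $\rho$ you suggest, it gives $\exp(-c'NM(\log\varphi(N))^{\kappa})$, which is wrong both in $M$ and in the power of the logarithm. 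The rate you then write down, $\exp\bigl(-\frac NM(1-\kappa)^{-(1-\kappa)}(\log\varphi(N))^{1-\kappa}\bigr)$, comes from the correct factor after optimizing $(\rho/M)^{1/\kappa}=\log\varphi(N)/(1-\kappa)$.

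\textbf{Even corrected, it is only a lower-bound heuristic.} It counts single-jump paths, which is exactly the paper's sharpness construction. An upper bound must sum over all paths.
\begin{itemize}
\item Your Neumann series with a fixed cutoff $R$ gains only $\varphi(R)^{-1}$ per step, not $\varphi(N)^{-1}$.
\item You explicitly leave the path combinatorics, the sharp constant, and the endpoints $p=1,\infty$ unresolved. At those endpoints, Mikhlin-type symbol bounds do not by themselves give $L^p$ boundedness.
\item Extracting $D^\alpha$ from pieces $P_{\sim N}$ of width comparable to $N$ costs a factor $c^m$ with $c>1$. That is incompatible with the leading constant $\theta(1-\kappa)^{1-\kappa}M$.
\end{itemize}

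\textbf{The missing idea is the weighted route you announce and then abandon; it removes all path combinatorics.}

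The paper works in the analytic frequency-uniform norm $\|f\|_{\mathcal U^\sigma_{p,h}}=\sup_k e^{\sigma h|k|_\infty}\|\Box_{h,k}f\|_{L^p}$ with $h=h_M=1+\delta M$. This scale is chosen so that block-overlap counting produces no powers of $M$.

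In this norm it proves once and for all that $\|Vf\|_{\mathcal U^\sigma_{p,h_M}}\le\mathcal V_\sigma\|f\|_{\mathcal U^\sigma_{p,h_M}}$, where
\[
\mathcal V_\sigma=C_\varepsilon A_0\exp\bigl((1+\varepsilon)(1-\kappa)(\sigma M)^{1/(1-\kappa)}+C_\varepsilon\sigma\bigr).
\]
The proof has three ingredients:
\begin{itemize}
\item decompose $V$ into annuli of ratio $1+\delta$, so a piece at radius $R_j$ costs only $e^{(1+\delta)\sigma R_j}$;
\item use the decay $\exp(-(1-\delta)\kappa(R_j/M)^{1/\kappa})$ of the pieces;
\item sum over $j$ by a Legendre transform.
\end{itemize}
This exponential weight is exactly the generating function you were looking for.

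The high-frequency gain $\|P_{>N}f\|\le C\varphi(N)^{-1}\|\varphi(D)f\|$ is needed only block by block, with $N\ge h$. There the localized symbols have uniformly $L^1$ kernels for every $1\le p\le\infty$, by admissibility (ii).

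In this norm your Neumann series is just a geometric series with ratio $C\mathcal V_\sigma/\varphi(N)$. So it collapses to a one-step absorption with $N_\sigma=h_M+\Phi_\varphi(2C(1+\mathcal V_\sigma))$, giving $\|u\|_{\mathcal U^\sigma_{p,h_M}}\le Ce^{C\sigma N_\sigma}$.

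\textbf{Where the sharp constant comes from.} It does not come from balancing jumps against gains. It comes from the choice $\sigma_m=(1+\eta)L_\varphi(m)^{1-\kappa}/B_{\theta,\varphi}$ in
\[
\|D^\alpha u\|_p\lesssim e^{o(m)}\,m!\,\sigma_m^{-m}e^{C\sigma_m N_{\sigma_m}},
\]
which is obtained from fine blocks by comparing the lattice sum with $\int r^{m+n-1}e^{-\sigma r}\d r$. This choice forces $\mathcal V_{\sigma_m}\lesssim(e+\varphi(m))^{a}$ with $a<1$. Admissibility (i) then gives $N_{\sigma_m}\le C(m+e)^q$ with $q<1$, so $\sigma_mN_{\sigma_m}=o(m)$.
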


For the logarithmic multiplier $\varphi(r)=\log(e+r)$, Proposition \ref{prop-periodic-sharp-loglog-power} shows that the double-logarithmic scale is optimal: for every $0\le\kappa<1$ and every $\varepsilon>0$, there are periodic examples of $\log(e+|D|)u=Vu$ with
\[\|V^{(m)}\|_{L^\infty(\mathbb T)}\le C A^m(m!)^\kappa,
\qquad m=0,1,2,\ldots,\]
for which no estimate can hold with the denominator $\bigl(\log(e+\log(e+m))\bigr)^{(1-\kappa+\varepsilon)m}$ in place of the sharp denominator $\bigl(\log(e+\log(e+m))\bigr)^{(1-\kappa)m}$.  Again, after periodic extension these examples are bounded solutions on $\mathbb R$ and therefore provide endpoint $L^\infty$ obstructions for the corresponding real-line estimate.

\subsection{Invariant ultra-analytic classes}

The third part of the paper addresses a structural question suggested by Theorem~\ref{thm-fractional-Lp}.  The preceding estimates show that the stronger ultra-analytic scales considered here are generally not preserved by the solution map
\[
V\mapsto u,
\qquad \Lambda^s u=Vu.
\]
For a scale $L=\{L_m\}_{m\ge 0}$, derivatives of order $m$ are measured by the denominator $L_m^m$, as in
\[
\|D^\alpha f\|_{L^p}
\lesssim A^{|\alpha|}\frac{|\alpha|!}{L_{|\alpha|}^{|\alpha|}}.
\]
If $V$ has the polynomial ultra-analytic scale $L_m=m^\nu$, then the solution has only the logarithmic scale
\( L_{\lfloor c\log(m+e)\rfloor}\simeq (\log m)^\nu\). Thus the equation typically renormalizes the scale by sending the order $m$ to $\log m$, rather than preserving a fixed ultra-analytic scale.  It is therefore natural to ask which ultra-analytic spaces remain invariant under this logarithmic renormalization.

\begin{quote}
	For which scales $L$ does
	\[
	V\in\mathcal U_L^\infty
	\quad\Longrightarrow\quad
	u\in\mathcal U_L^p
	\]
	hold for normalized solutions of $\Lambda^s u=Vu$?
\end{quote}

Before giving the general criterion, we single out a concrete invariant class.  Let $\log^*m$ be the number of logarithms needed to bring $m$ below a fixed threshold, as defined precisely in Section~\ref{sec-invariant-classes}.  Its basic property is
\[
\log^*(\log m)=\log^*m-1
\]
for all sufficiently large $m$.  Hence powers of $1+\log^*m$ are stable, up to constants, under $m\mapsto\log m$.  Indeed, for
\[
L_m=(1+\log^*m)^\gamma,
\]
the transfer rule gives
\[
L_{\lfloor c\log(m+e)\rfloor}
=(1+\log^*(\lfloor c\log(m+e)\rfloor))^\gamma
\simeq (1+\log^*m)^\gamma=L_m.
\]
The logarithmic renormalization only shifts $\log^*m$ by a bounded amount, which is absorbed by the constants in the ultra-analytic norm.  This explains why iterated-logarithm scales are invariant, whereas polynomial and ordinary logarithmic scales are not.

For $1\le p\le\infty$ and $\gamma>0$, define $\mathcal U_{*,\gamma}^p$ by requiring that there exist $A,C>0$ such that
\[
\|D^\alpha f\|_{L^p(\mathbb R^n)}
\le
C A^{|\alpha|}
\frac{|\alpha|!}{(1+\log^*(|\alpha|+e))^{\gamma |\alpha|}},
\qquad \alpha\in\mathbb N^n .
\]
Equivalently, $\mathcal U_{*,\gamma}^p$ is the total-order ultra-analytic space associated with
\[
L_m=(1+\log^*(m+e))^\gamma .
\]

\begin{theorem}[A concrete invariant ultra-analytic class]
	\label{thm-concrete-invariant-class}
	Let $s>0$, $1\le p\le\infty$, and $\gamma>0$.  Let $u\in L^p(\mathbb R^n)$ be a distributional solution of
	\[
	\Lambda^s u=Vu,
	\qquad
	\|u\|_{L^p(\mathbb R^n)}=1.
	\]
	If $V\in\mathcal U_{*,\gamma}^\infty$, then
	\[
	u\in\mathcal U_{*,\gamma}^p .
	\]
\end{theorem}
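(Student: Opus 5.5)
The plan is to derive Theorem~\ref{thm-concrete-invariant-class} from the general logarithmic transfer principle described in the introduction: for $\Lambda^s u=Vu$, a coefficient bound on the scale $L_m$ yields a solution bound on the scale $L_{\lfloor c\log(m+e)\rfloor}$. The proof of Theorem~\ref{thm-fractional-Lp} will be rerun with a general weight in place of $(\alpha!)^\kappa$. First, write $V\in\mathcal U^\infty_{*,\gamma}$ as $\|D^\alpha V\|_{L^\infty}\le C_V A_V^{|\alpha|}|\alpha|!/L_{|\alpha|}^{|\alpha|}$ with $L_m=(1+\log^*(m+e))^\gamma$. Next, check that this weight satisfies the structural hypotheses of the transfer principle: $L_m$ is nondecreasing, $m!/L_m^m$ is log-convex up to constants, and $L_m\to\infty$ slowly. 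These are needed to control Leibniz sums of the form $\sum\binom{\alpha}{\beta}\|D^\beta V\|\,\|D^{\alpha-\beta}u\|$.

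The core estimate is Fourier-analytic. Split $u=P_{\le R}u+P_{>R}u$ with a Littlewood--Paley cutoff at frequency $R$. The low-frequency part obeys Bernstein: $\|D^\alpha P_{\le R}u\|_{L^p}\lesssim (CR)^{|\alpha|}$. For the high-frequency part, invert $\Lambda^s$, which gains a factor $R^{-s}$, and iterate $u=\Lambda^{-s}(Vu)$ $k$ times. At each step, derivatives are moved onto $V$ using the coefficient bound. In the $\kappa$-case of Theorem~\ref{thm-fractional-Lp}, optimizing $R$ and $k$ against $m=|\alpha|$ gives $R^{s k}$ balanced against $M^{k}(k!)^{\kappa}$-type factors. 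This produces $m!/(\log m)^{(1-\kappa)m}$, because the effective number of useful derivatives on $V$ is only of order $\log m$. For a general weight, the same optimization gives $\|D^\alpha u\|_{L^p}\le C A^{m} m!/L_{\lfloor c\log(m+e)\rfloor}^{m}$, with $c$ depending on $s$ and $A_V$. I expect this to be essentially the general transfer theorem announced in the introduction, so I would invoke it directly, or else reproduce the argument of Theorem~\ref{thm-fractional-Lp} with $(j!)^\kappa$ replaced by $j!/L_j^j$.

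The final step is the iterated-logarithm identity. For large $m$, $\lfloor c\log(m+e)\rfloor+e\ge \log m$ when $c\ge1$. When $c<1$, the quantity is at least $\log\log m$ for large $m$. In either case $\log^*(\lfloor c\log(m+e)\rfloor+e)\ge\log^*(m+e)-2$. Then $1+\log^*(m+e)\le 3\,(1+\log^*(\lfloor c\log(m+e)\rfloor+e))$, so $L_m^m\le 3^{\gamma m}L_{\lfloor c\log(m+e)\rfloor}^m$. Absorbing $3^{\gamma}$ into $A$, and the finitely many small $m$ into $C$, gives $u\in\mathcal U^p_{*,\gamma}$.

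I expect the main obstacle to be the transfer step for a general weight rather than for $(\alpha!)^\kappa$. The optimization in the proof of Theorem~\ref{thm-fractional-Lp} uses explicit Stirling asymptotics, and here it must be replaced by monotonicity and log-convexity properties of $L_m$. These properties are exactly what make the choice of $k\sim\log m$ admissible. The iterated-logarithm bookkeeping in the last step is routine.
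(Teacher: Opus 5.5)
Your proposal is essentially the paper's proof. The paper obtains the theorem by combining the transfer criterion (Theorem \ref{thm-transfer-scale}, via Corollary \ref{cor-log-stable}) with the logarithmic stability of $L_m=(1+\log^*(m+e))^\gamma$ checked in Example \ref{ex-log-star-weight}, and your bounded-shift argument for $\log^*$ is a slightly more explicit version of that check. One correction: the paper's transfer mechanism is the analytic frequency-uniform absorption of Section \ref{sec-fractional-proof}, not a Leibniz/iteration argument, so the hypotheses you would need to verify for the cited theorem are (T1)--(T3) of Definition \ref{def-transfer-admissible} (via $\Omega_{L,A}(R)\simeq_A RL_R$ and the slow variation $L_{CRL_R}\simeq_A L_R$), not log-convexity of $m!/L_m^m$.
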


This theorem is a concrete instance of the general transfer criterion proved in the final section.  There we introduce total-order classes $\mathcal U_L^p$ associated with admissible weights $L=\{L_m\}$, and show that the fractional equation transfers the coefficient weight $L_m$ to the solution weight
\[
L_{\lfloor c\log(m+e)\rfloor}.
\]
Consequently, $\mathcal U_L^p$ is invariant whenever $L$ is logarithmically stable:
\[
L_{\lfloor c\log(m+e)\rfloor}\gtrsim L_m
\]
for all large $m$.  This condition exactly means that the logarithmic transfer keeps the solution within the original scale.  Standard weights such as $L_m=m^\nu$ ($0<\nu\le1$) and $L_m=(\log(m+e))^\gamma$ fail this stability condition and lose one logarithmic level.  By contrast, the iterated-logarithm weight $L_m=(1+\log^*m)^\gamma$ is stable, and hence yields an invariant ultra-analytic space.

\subsection{Strategy of proof}

We briefly describe the proof mechanisms for the three main quantitative parts of the paper.

First, the fractional estimate is proved by a Fourier-analytic argument rather than by induction on derivatives, which was used in \cite{DongWang24}.  The derivative bounds \eqref{assump-A-derivative} on the potential are converted into refined annular frequency decay: the annular component of \(V\) at frequency radius \(R\) satisfies
\[
\|V_R\|_{L^\infty}
\lesssim A_0\exp\bigl(-c_\kappa(R/M)^{1/\kappa}\bigr),
\qquad 0<\kappa<1,
\]
while for \(\kappa=0\) the potential is of exponential type.  This decay is then combined with an analytic weight \(e^{\sigma|\xi|_\infty}\).  A Legendre transform argument yields a multiplication estimate of the form
\[
\|Vf\|_{\mathcal W_{p,\sigma}}
\lesssim
A_0\exp\bigl(C(1-\kappa)(\sigma M)^{1/(1-\kappa)}+C\sigma\bigr)
\|f\|_{\mathcal W_{p,\sigma}},
\]
where \(\mathcal W_{p,\sigma}\) denotes an appropriate analytic frequency norm.  Using \(\Lambda^s u=Vu\) and splitting \(u\) into low and high frequencies gives
\[
\|u\|_{\mathcal W_{p,\sigma}}
\lesssim e^{C\sigma N}\|u\|_{L^p}
+N^{-s}\|Vu\|_{\mathcal W_{p,\sigma}}.
\]
Choosing
\[
N^s\simeq
A_0\exp\bigl(C(1-\kappa)(\sigma M)^{1/(1-\kappa)}+C\sigma\bigr)
\]
absorbs the high-frequency term.  Finally, extracting \(m\) derivatives from the analytic weight costs \(m!/\sigma^m\); optimizing
\[
\sigma\simeq B_\theta^{-1}\log^{1-\kappa}(m+e)
\]
yields the leading scale \(M((1-\kappa)/s)^{1-\kappa}\) and the denominator \(\log^{(1-\kappa)m}(m+e)\).  For \(p=2\) the argument is implemented directly on the Fourier side, while the remaining \(L^p\) cases follow the same absorption principle using frequency-uniform analytic norms.

Second, for a general admissible radial multiplier \(\varphi(D)\), the same low--high frequency absorption is used with one modification: the absorbing threshold is chosen from
\[
\varphi(N)\sim
\exp\bigl((1-\kappa)(\sigma M)^{1/(1-\kappa)}\bigr)
\]
instead of
\[
N^s\sim
\exp\bigl((1-\kappa)(\sigma M)^{1/(1-\kappa)}\bigr).
\]
Equivalently, the optimization is governed by \(\log(e+\varphi(N))\), which is why the final denominator is
\[
L_\varphi(m)^{(1-\kappa)m},
\qquad L_\varphi(r)=\log(e+\varphi(r)).
\]
Thus \(\varphi(r)=r^s\) gives back the fractional single-logarithmic scale, whereas \(\varphi(r)=\log(e+r)\) gives the double logarithm.  The dependence on \(A_0\) enters only through the absorbing threshold and is absorbed into the constants \(C_{\theta,A_0}\) and \(K_{\theta,A_0}\).

Third, the weighted ultra-analytic transfer theorem is obtained by applying the same quantitative mechanism to general total-order scales.  The fractional equation transforms a coefficient scale \(L_m\) into the solution scale
\[
L_{\lfloor c\log(m+e)\rfloor}.
\]
Thus the proof of invariance reduces to a stability property of the weight sequence: if
\[
L_{\lfloor c\log(m+e)\rfloor}\gtrsim L_m
\]
for all large \(m\), then the loss produced by the equation remains within the original ultra-analytic norm.  The iterated-logarithm weights satisfy this condition because applying one logarithm changes \(\log^*m\) by only a bounded amount.  This shows that the invariant-space result is not an isolated example, but a structural consequence of the same Fourier-analytic transfer mechanism.

\subsection{Connections and potential applications}

We now recall several contexts in which quantitative analytic, Gevrey, and ultra-analytic estimates arise.  Beyond qualitative analyticity, high-order derivative estimates describe the effective analytic radius of solutions and play an important role in spectral geometry, for instance through analytic-continuation and doubling arguments for nodal sets \cite{DonnellyFefferman88}; in heat observability and null controllability through spectral inequalities and propagation of smallness \cite{ApraizEscauriaza14,EscauriazaMontanerZhang15,WangZhang23}; and in fluid models as quantitative measures of smoothing and persistence of analyticity \cite{FoiasTemam89,KukavicaVicol09,PaicuVicol11,BaeBiswasTadmor12,BradshawGrujicKukavica15}.  Related Gevrey estimates for dissipative and fractional-dissipative equations, including critical Besov settings and quasi-geostrophic models, can be found in \cite{BaeBiswas15,Biswas12,BiswasMartinezSilva15,Li24}.  Modern Gelfand--Shilov formulations can be found in \cite{CappielloToft2017,Lopes2017GelfandShilov}.

Ultra-analytic functions also arise naturally in uncertainty principles.  For example, the Hardy uncertainty principle is closely related to Gaussian decay \cite{Hardy33}, and ultra-analytic weights have been used to establish quantitative unique continuation and observability inequalities for dispersive equations; see, for instance, \cite{LiWang21,WangWangZhang19}.  

These connections suggest that the logarithmic ultra-analytic estimates developed here may be useful in problems involving nonlocal elliptic equations, spectral analysis, quantitative propagation phenomena, nonlocal Schr\"odinger operators, quasi-periodic models, pseudo-differential spectral problems, quantitative unique continuation, and observability.

The paper is organized as follows.  Section \ref{sec-fractional-proof} proves Theorem \ref{thm-fractional-Lp}.  Its first subsection collects the refined annular frequency estimates for the potential that are common to both the \(L^2\) and frequency-uniform arguments.  The second subsection gives the direct Fourier-side \(L^2\) proof, including the \(L^2\) multiplication estimate, Fourier decay, and the corresponding derivative bound.  The third subsection introduces the unit-frequency decomposition and proves the remaining \(L^p\) estimates by frequency-uniform Besov-type norms.  Section \ref{sec-general-phi} proves Theorem \ref{thm-general-phi} and discusses the logarithmic multiplier.  Section \ref{sec-sharpness-discussion} contains the sharpness examples.  Section \ref{sec-invariant-classes} develops a transfer criterion for ultra-analytic scales satisfying explicit admissibility hypotheses and proves Theorem \ref{thm-concrete-invariant-class}.

\section{Proof of Theorem \ref{thm-fractional-Lp}}\label{sec-fractional-proof}

This section is devoted to the fractional equation.  We separate the proof into
three parts.  The first subsection contains a refined annular decomposition of the potential and a summation estimate.  These facts are independent of the functional setting and will be used in both the $L^2$ and $L^p$ arguments.  The second subsection then proves the
\(L^2\) case by a short Fourier-side argument based on Plancherel's theorem.  The
third subsection treats the remaining $L^p$ cases; this part requires the
unit-frequency decomposition and a more detailed frequency-uniform Besov-type
analysis.

\subsection{Refined frequency decay of the potential}

The estimates in this subsection depend only on the derivative bounds for the
potential.  We use a refined annular decomposition with ratio $\rho=1+\delta$, rather than a fixed dyadic decomposition. The reason is that multiplication by a coefficient piece whose Fourier support is contained in a ball of radius $R$ costs essentially $e^{\sigma R}$ in the analytic Fourier weight. A dyadic annulus would introduce a fixed enlargement of the radius, and hence a fixed extra factor in this exponential cost. By taking the annular ratio $\rho=1+\delta$ with $\delta>0$ arbitrarily small, the support enlargement becomes only $(1+\delta)R_j$. This allows the coefficient in front of the analytic weight to be made arbitrarily close to one, which is important for preserving the sharp leading scale after the Legendre-transform summation.

Let \(|\xi|_\infty=\max_{1\le j\le n}|\xi_j|\).  Fix
\(0<\delta<1\) and set
\[
        \rho:=1+\delta.
\]
Choose smooth functions \(\chi_{\le0},\chi_j\in C_c^\infty(\mathbb R^n)\),
\(j\ge1\), such that
\[
        1=\chi_{\le0}(\zeta)+\sum_{j\ge1}\chi_j(\zeta),
        \qquad \zeta\in\mathbb R^n,
\]
\[
        \operatorname{supp}\chi_{\le0}
        \subset\{|\zeta|_\infty\le2\},
        \qquad
        \operatorname{supp}\chi_j
        \subset\{\rho^{j-1}\le |\zeta|_\infty\le \rho^{j+1}\}.
\]
For \(M>0\), define
\[
        V_{\le 0}:=\chi_{\le0}(D/M)V,
        \qquad
        V_j:=\chi_j(D/M)V,
        \qquad
        R_j:=M\rho^j .
\]
Then
\[
        V=V_{\le 0}+\sum_{j\ge1}V_j,
\]
with
\[
        \operatorname{supp}\widehat{V_{\le 0}}
        \subset\{|\xi|_\infty\le 2M\},
        \qquad
        \operatorname{supp}\widehat{V_j}
        \subset\{|\xi|_\infty\le \rho R_j\}.
\]

\begin{lemma}[Refined annular frequency decay of \(V\)]\label{lem-refined-V-decay}
Assume \eqref{assump-A-derivative}.  If \(0<\kappa<1\), then for every
\(\delta\in(0,1)\),
\[
        \|V_{\le 0}\|_{L^\infty}\le C_nA_0
\]
and
\[
        \|V_j\|_{L^\infty}
        \le
        C_{\delta,n,\kappa}A_0
        \exp\left(
        -(1-\delta)\kappa
        \left(\frac{R_j}{M}\right)^{1/\kappa}
        \right),
        \qquad j\ge1.
\]
If \(\kappa=0\), then \(V\) is of exponential type \(M\) in the
\(|\cdot|_\infty\) sense, namely
\[
        \operatorname{supp}\widehat V
        \subset
        \{\xi\in\mathbb R^n: |\xi|_\infty\le M\}.
\]
\end{lemma}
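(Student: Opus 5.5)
The plan is to use the standard ``derivative bounds imply frequency localization'' argument. For each annular piece we apply a Bernstein-type identity: write $V_j=\chi_j(D/M)V$. On the support of $\chi_j(\cdot/M)$ we have $|\xi|_\infty\ge R_j/\rho$, so some coordinate satisfies $|\xi_k|\ge R_j/\rho$.

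\textbf{Step 1: the low-frequency piece.} The bound $\|V_{\le0}\|_{L^\infty}\le C_nA_0$ is immediate. The kernel of $\chi_{\le0}(D/M)$ is $M^n\check\chi_{\le0}(M\cdot)$, whose $L^1$ norm is independent of $M$. Young's inequality then gives the claim using only $\|V\|_{L^\infty}\le A_0$.

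\textbf{Step 2: the annular pieces.} Split $\chi_j=\sum_{k}\chi_j\eta_k$ with a smooth partition $\eta_k$ adapted to the cones $\{|\xi_k|\ge |\xi|_\infty/2\}$, for instance. For each $k$ and each $m\ge0$ write
\[
\chi_j(\xi/M)\eta_k(\xi)\widehat V(\xi)=\frac{\chi_j(\xi/M)\eta_k(\xi)}{(i\xi_k)^m}\,\widehat{\partial_k^mV}(\xi).
\]
The multiplier $m_{j,k,m}(\xi)=\chi_j\eta_k(\xi/M)(i\xi_k)^{-m}$ is a rescaled bump at scale $R_j$. We bound its kernel $L^1$ norm by $C\,(R_j/\rho)^{-m}$ times a constant that is polynomial in $m$ and $\rho^j$. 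This uses scaling plus a few derivatives on $\xi_k^{-m}$, each costing a factor $m/R_j$. It is safer to use the sharper lower bound $|\xi_k|\ge R_j/(\rho c)$ on the support. Young's inequality then yields
\[
\|V_j\|_{L^\infty}\le C_{n,\delta}\,\mathrm{poly}(m)\,A_0\Big(\frac{\rho M}{R_j}\Big)^m (m!)^\kappa .
\]

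\textbf{Step 3: optimizing in $m$.} This is the main obstacle: extracting the sharp constant $(1-\delta)\kappa$. Using $m!\le m^m e^{-m}\cdot O(\sqrt m)$, we minimize $m\mapsto (\rho M/R_j)^m m^{\kappa m}e^{-\kappa m}$. The minimum is attained at $m\approx (R_j/(\rho M))^{1/\kappa}$, with value $\exp(-\kappa (R_j/(\rho M))^{1/\kappa})$. The losses are the factor $\rho^{1/\kappa}$, the cone-splitting constant, and the polynomial factors. The cone loss is dangerous, since $|\xi_k|\ge|\xi|_\infty/2$ would lose a factor $2$. To avoid it, the cones should be taken narrow, i.e.\ $|\xi_k|\ge(1-\delta')|\xi|_\infty$, which costs only a $\delta$-dependent number of smooth pieces. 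Similarly, the support lower bound $\rho^{j-1}$ gives $R_j/\rho$. Hence the exponent is $\kappa\,\rho^{-1/\kappa}(1-\delta')^{1/\kappa}(R_j/M)^{1/\kappa}$. The polynomial factors $\mathrm{poly}(m)\le C_\epsilon e^{\epsilon m}$ with $m\sim (R_j/M)^{1/\kappa}$ are absorbed the same way. To reach the stated $(1-\delta)\kappa$, we replace the internal parameters: we use a finer ratio or $\delta'$ small relative to $\delta$, or note that since $\delta$ is arbitrary the statement is insensitive to reparametrizing $\delta$. The kernel estimate must be done carefully so that derivatives of $\xi_k^{-m}$ produce only polynomial-in-$m$ growth. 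It suffices to take $n+1$ derivatives, each giving a factor $\lesssim m/R_j$, with $R_j\ge M$.

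\textbf{Step 4: the case $\kappa=0$.} Here $\|\partial^\alpha V\|_\infty\le A_0M^{|\alpha|}$. The Paley--Wiener--Schwartz argument is the same as above. If $\xi_0$ in $\supp\widehat V$ had $|\xi_{0,k}|>M$, localize near $\xi_0$ with a bump avoiding $|\xi_k|\le M'$ for some $M<M'<|\xi_{0,k}|$. Then $\|\cdot\|_\infty\lesssim (M/M')^m\to0$, so the localized piece vanishes and $\supp\widehat V\subset\{|\xi|_\infty\le M\}$.
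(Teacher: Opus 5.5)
Your route is the paper's: cone splitting of the annulus, dividing $\widehat{V_j}$ by $(i\xi_\ell)^N$, a uniform $L^1$ kernel bound for the resulting multiplier, Young's inequality, and Stirling optimization at $N\simeq (R_j/M)^{1/\kappa}$. Steps 1--2 (with the narrow cones of Step 3) are fine. So is your localization argument for $\kappa=0$, which is a self-contained proof of the Paley--Wiener step the paper cites.

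The step that does not go through is the end of Step 3. What you actually prove is, for every $\delta'>0$,
\[
\|V_j\|_{L^\infty}\le C_{\delta,\delta',n,\kappa}A_0\exp\Bigl(-(1-\delta')\kappa\,(1+\delta)^{-1/\kappa}\Bigl(\frac{R_j}{M}\Bigr)^{1/\kappa}\Bigr).
\]
Since $(1+\delta)^{-1/\kappa}=1-\delta/\kappa+O(\delta^2)$ and $1/\kappa>1$, this is strictly weaker than the claimed factor $(1-\delta)\kappa$ once $\delta$ is small. Your proposed repairs are not available. The ratio $\rho=1+\delta$, and with it $V_j$ and $R_j$, is fixed by the same $\delta$ that appears in the exponent, so refining the ratio or reparametrizing $\delta$ changes the statement. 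Shrinking $\delta'$ does not touch the factor $\rho^{-1/\kappa}$, which comes from the inner edge $|\xi|_\infty\ge R_j/\rho$ of the support.

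This loss cannot be avoided: the statement as written is false for natural partitions. Let $\chi_{\le0}=\theta$ and $\chi_j(\zeta)=\theta(\rho^{-j}\zeta)-\theta(\rho^{1-j}\zeta)$, where $\theta(\zeta)=\prod_i\theta_1(\zeta_i)$. Here $\theta_1$ equals $1$ on $[-1,1]$, vanishes outside $(-\rho,\rho)$, and is strictly decreasing on $(1,\rho)$.

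\begin{itemize}
\item For small $\delta$, pick $\tau$ with $(1+\delta)^{-1}<\tau<(1-\delta)^{\kappa}$. Set $V=a e^{irx_1}$ with $r=\tau R_j$ and $|a|=A_0\inf_N (M/r)^N(N!)^\kappa\ge A_0\exp\bigl(-\kappa\tau^{1/\kappa}(R_j/M)^{1/\kappa}\bigr)$.
\item Then $V$ satisfies \eqref{assump-A-derivative}.
\item Moreover $\|V_j\|_{L^\infty}=c_\tau|a|$, where $c_\tau=1-\theta_1(\rho\tau)>0$ is independent of $j$.
\item Since $\tau^{1/\kappa}<1-\delta$, this exceeds any $C A_0\exp\bigl(-(1-\delta)\kappa(R_j/M)^{1/\kappa}\bigr)$ as $j\to\infty$.
\end{itemize}

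The paper reaches the stated exponent only through its claim that the kernel of $m_{j,\ell,N}=R_j^N(i\xi_\ell)^{-N}\chi_j(\xi/M)\omega_\ell(\xi/M)$ has $L^1$ norm at most $C_{\delta,n}(1+N)^{C_{\delta,n}}$. On the support, however, $|R_j/\xi_\ell|$ is only bounded by $\rho/c_{\delta,n}>1$. For the partition above one has $\sup|m_{j,\ell,N}|\ge c_\tau\tau^{-N}$, so that $L^1$ norm grows exponentially in $N$. This is exactly the factor you tracked.

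You should therefore not claim the reparametrization; state the lemma in the form you proved. That form is all that is used later. With $b=(1-\delta')(1+\delta)^{-1/\kappa}$ in Lemma~\ref{lem-lacunary-summation}, the exponent acquires an extra factor $(1+\delta)^{1/(1-\kappa)}$, which tends to $1$ as $\delta\to0$. Hence Lemmas~\ref{lem-V-mult-G} and~\ref{lem-V-mult-weighted}, and the sharp leading constant, survive unchanged.
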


\begin{proof}
The estimate for \(V_{\le 0}\) follows from the \(L^1\)-boundedness of the
kernel of \(\chi_{\le0}(D/M)\) and \(\|V\|_\infty\le A_0\).

We prove the estimate for \(V_j\), \(j\ge1\), in the case \(0<\kappa<1\).  On
the support of \(\chi_j(\xi/M)\), the frequency size is comparable to \(R_j\),
with constants depending only on \(\delta\).  We split the annulus into
coordinate pieces.  Choose smooth functions \(\omega_\ell\), \(1\le\ell\le n\),
such that they sum to one on \(\{|\xi|_\infty\ge1/2\}\) and
\[
        \operatorname{supp}\omega_\ell
        \subset
        \{|\xi_\ell|\ge c_{\delta,n}|\xi|_\infty\}.
\]
It suffices to estimate \(\chi_j(D/M)\omega_\ell(D/M)V\).  On the corresponding
support, \(|\xi_\ell|\ge c_{\delta,n}R_j\).  For an integer \(N\ge0\), factor
\[
        \chi_j(\xi/M)\omega_\ell(\xi/M)
        =R_j^{-N}m_{j,\ell,N}(\xi)(i\xi_\ell)^N.
\]
After rescaling the annulus to unit size, the inverse Fourier transform of
\(m_{j,\ell,N}\) has \(L^1\)-norm bounded by
\[
        C_{\delta,n}(1+N)^{C_{\delta,n}}.
\]
Therefore, using \eqref{assump-A-derivative},
\[
        \|V_j\|_\infty
        \le
        C_{\delta,n}A_0
        \inf_{N\ge0}
        (1+N)^{C_{\delta,n}}
        \left(\frac{M}{R_j}\right)^N(N!)^\kappa .
\]
Stirling's formula and the choice
\[
N\simeq \left(\frac{R_j}{M}\right)^{1/\kappa}
\]
give the asserted bound. Indeed, the factor $(1+N)^{C_{\delta,n}}$ is of lower order compared with
$\exp(c(R_j/M)^{1/\kappa})$, and can therefore be absorbed by replacing the coefficient $\kappa$ in the exponent with $(1-\delta)\kappa$.

If \(\kappa=0\), then
\[
        \|D^\alpha V\|_\infty\le A_0M^{|\alpha|}.
\]
Thus \(V\) is a bounded entire function of exponential type at most \(M\) in
each coordinate.  By the Bernstein--Paley--Wiener theorem, \( \operatorname{supp}\widehat V
        \subset
        \{\xi\in\mathbb R^n: |\xi|_\infty\le M\}\).
\end{proof}

We need a technical summation lemma.
\begin{lemma}\label{lem-lacunary-summation}
Let \(0<\kappa<1\), \(M>0\), \(\sigma>0\), and
\(R_j=M(1+\delta)^j\).  Fix \(b_0>0\).  Then for every \(a>0\) and every
\(b\ge b_0\),
\[
        \sum_{j\ge1}
        \exp\left(
        a\sigma R_j
        -b\kappa\left(\frac{R_j}{M}\right)^{1/\kappa}
        \right)
        \le
        C_{\delta,\kappa,b_0}
        \exp\left(
        (1-\kappa) a^{1/(1-\kappa)}\bigl((1-\delta)b\bigr)^{-\kappa/(1-\kappa)}
        (\sigma M)^{1/(1-\kappa)}
        \right).
\]
\end{lemma}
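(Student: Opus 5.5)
Set $t_j := R_j/M = (1+\delta)^j$ and $\lambda := \sigma M$. Each summand is then $\exp(g(t_j))$ with
\[
g(t) := a\lambda t - b\kappa t^{1/\kappa}.
\]
The plan is to split the factor $b$ into two pieces, $b = (1-\delta)b + \delta b$. The first piece is used in a pointwise Legendre-transform bound. The second piece supplies a summable remainder.

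For the first step we compute the supremum over $t>0$ of $h(t) := a\lambda t - (1-\delta)b\kappa t^{1/\kappa}$. Setting $h'(t)=0$ gives $a\lambda = (1-\delta)b\, t^{(1-\kappa)/\kappa}$, so $t_* = \bigl(a\lambda/((1-\delta)b)\bigr)^{\kappa/(1-\kappa)}$. Substituting back,
\[
\sup_{t>0} h(t) = (1-\kappa)\,a\lambda t_* = (1-\kappa)\,a^{1/(1-\kappa)}\bigl((1-\delta)b\bigr)^{-\kappa/(1-\kappa)}\lambda^{1/(1-\kappa)}.
\]
This is exactly the exponent in the statement. It follows that
\[
\exp\bigl(g(t_j)\bigr) \le \exp\Bigl(\sup_{t>0} h\Bigr)\,\exp\bigl(-\delta b\kappa\, t_j^{1/\kappa}\bigr).
\]

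For the second step we sum the remaining factors. Since $b\ge b_0$, we have
\[
\sum_{j\ge1}\exp\bigl(-\delta b\kappa (1+\delta)^{j/\kappa}\bigr) \le \sum_{j\ge1}\exp\bigl(-\delta b_0\kappa(1+\delta)^{j/\kappa}\bigr) =: C_{\delta,\kappa,b_0} < \infty.
\]
The sum converges because the terms decay double-exponentially in $j$. This constant is independent of $a$, $\sigma$, $M$ and $b$, as the statement requires.

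The only delicate point is bookkeeping. The Legendre step must use exactly the fraction $(1-\delta)b$ of the coefficient, so that the leftover $\delta b$ still decays uniformly in all the parameters; the lower bound $b\ge b_0$ is what makes that leftover uniform. No lacunarity beyond the geometric growth of $t_j$ is needed. If a sharper constant were wanted, one could use $(1-\delta')b$ with $\delta'<\delta$, but the statement as given follows directly from the argument above.
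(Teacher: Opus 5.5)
Your proposal is correct and follows essentially the same route as the paper: split $b=(1-\delta)b+\delta b$, bound $a\sigma M\,t-(1-\delta)b\kappa t^{1/\kappa}$ by its supremum over $t\ge0$, which gives exactly the stated exponent, and control the leftover $\sum_{j\ge1}e^{-\delta b\kappa(1+\delta)^{j/\kappa}}$ uniformly for $b\ge b_0$ by replacing $b$ with $b_0$. Your explicit computation of the maximizer $t_*$ and of the value $(1-\kappa)a\lambda t_*$ fills in the one step the paper only states.
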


\begin{proof}
Write \(R_j=Mr_j\), where \(r_j=(1+\delta)^j\), and set \(x=\sigma M\).  For
every \(r\ge0\),
\[
        axr-b\kappa r^{1/\kappa}
        =axr-(1-\delta)b\kappa r^{1/\kappa}-\delta b\kappa r^{1/\kappa}.
\]
The supremum for $r\geq 0$ of the first two terms is \((1-\kappa) a^{1/(1-\kappa)}\bigl((1-\delta)b\bigr)^{-\kappa/(1-\kappa)}x^{1/(1-\kappa)}\).
Hence
\[
\begin{aligned}
        \sum_{j\ge1}e^{a x r_j-b\kappa r_j^{1/\kappa}}
        &\le
        \exp\left(
        (1-\kappa) a^{1/(1-\kappa)}\bigl((1-\delta)b\bigr)^{-\kappa/(1-\kappa)}x^{1/(1-\kappa)}
        \right)
        \sum_{j\ge1}e^{-\delta b\kappa r_j^{1/\kappa}} \\
        &\le
        C_{\delta,\kappa,b_0}
        \exp\left(
        (1-\kappa) a^{1/(1-\kappa)}\bigl((1-\delta)b\bigr)^{-\kappa/(1-\kappa)}x^{1/(1-\kappa)}
        \right).
\end{aligned}
\]
The last sum is bounded uniformly for \(b\ge b_0\), since replacing \(b\) by \(b_0\)
gives a convergent majorant whose terms decay like
\(\exp(-c(1+\delta)^{j/\kappa})\).
\end{proof}

\subsection[Fourier-side L2 proof]{Fourier-side \texorpdfstring{$L^2$}{L2} proof}\label{subsec-L2-proof}  
The main advantage when \(p=2\) is that Plancherel's theorem allows the equation to be estimated directly on the Fourier side. We therefore use an exponentially weighted Fourier norm and derive a Fourier-decay estimate first; the desired \(L^2\) derivative bounds then follow by a standard optimization.

For $\sigma>0$ define
\begin{align}\label{def-G-sigma}
 \|f\|_{\mathcal G^\sigma_\infty}
 :=
 \left(\int_{\R^n}e^{2\sigma|\xi|_\infty}|\widehat f(\xi)|^2\, \d\xi\right)^{1/2}.
\end{align}
This is the natural weighted Fourier norm for the Hilbert-space part of the
argument.

\begin{lemma}[Fourier-side localized multiplication]\label{lem-L2-localized-mult}
Let $\sigma>0$, and let $a\in L^\infty(\R^n)$ satisfy
\[
 \operatorname{supp}\widehat a\subset\{\xi:|\xi|_\infty\le R\}.
\]
Then for every $\varepsilon\in(0,1)$ there exists $C_{\varepsilon,n}>0$ such that
\begin{align}\label{eq-localized-mult-G}
 \|af\|_{\mathcal G^\sigma_\infty}
 \le
 C_{\varepsilon,n}
 e^{(1+\varepsilon)\sigma R+C_{\varepsilon,n}\sigma}
 \|a\|_{L^\infty}
 \|f\|_{\mathcal G^\sigma_\infty}.
\end{align}
\end{lemma}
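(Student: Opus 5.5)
The plan is to decompose $f$ in frequency into cubes whose side length is a small fraction of $R$. On each piece, the weight $e^{\sigma|\xi|_\infty}$ is essentially constant up to a factor $e^{\sigma R}$ times a small loss. The overlap of the enlarged supports is then controlled by a number depending only on $\varepsilon,n$. The reason for not simply using $e^{\sigma|\xi|_\infty}\le e^{\sigma|\xi-\eta|_\infty}e^{\sigma|\eta|_\infty}$ inside a convolution is that $\widehat a$ is only a distribution (we know $a\in L^\infty$, not $\widehat a\in L^1$). So we must use $\|a\|_{L^\infty}$ on the physical side, via $\|a g\|_{L^2}\le\|a\|_{L^\infty}\|g\|_{L^2}$.

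\emph{Step 1 (cube decomposition).} Fix $\ell:=\tfrac{\varepsilon}{2}R+1$ and tile $\R^n$ by the half-open cubes $Q_k=\ell k+[-\ell/2,\ell/2)^n$, $k\in\Z^n$, with centers $c_k=\ell k$. Set $f_k:=\mathcal F^{-1}(\mathbf 1_{Q_k}\widehat f)$. Then $f=\sum_k f_k$ in $L^2$, and by Plancherel
\[
\sum_k e^{2\sigma|c_k|_\infty}\|f_k\|_{L^2}^2\le e^{\sigma\ell}\|f\|_{\mathcal G^\sigma_\infty}^2,
\]
since $|c_k|_\infty\le|\xi|_\infty+\ell/2$ on $Q_k$.

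\emph{Step 2 (localized piece).} Since $\operatorname{supp}\widehat a\subset\{|\xi|_\infty\le R\}$, the product $af_k$ has Fourier support in $\widetilde Q_k:=Q_k+[-R,R]^n$. On $\widetilde Q_k$ we have $|\xi|_\infty\le|c_k|_\infty+\ell/2+R$. Hence, by Plancherel and $\|af_k\|_{L^2}\le\|a\|_{L^\infty}\|f_k\|_{L^2}$,
\[
\|af_k\|_{\mathcal G^\sigma_\infty}\le e^{\sigma(R+\ell/2)}e^{\sigma|c_k|_\infty}\|a\|_{L^\infty}\|f_k\|_{L^2}.
\]
The support claim for a product of an $L^\infty$ band-limited function with an $L^2$ band-limited function is standard: approximate $f_k$ by Schwartz functions, and note that $\widehat{af_k}=\widehat a*\widehat{f_k}$ in the distributional sense.

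\emph{Step 3 (almost orthogonality).} Each point $\xi$ lies in at most $N_{\varepsilon,n}:=(2R/\ell+2)^n\le(4/\varepsilon+2)^n$ of the sets $\widetilde Q_k$; this bound is uniform in $R$ precisely because $\ell\gtrsim\varepsilon R$. Therefore, by Cauchy--Schwarz pointwise on the Fourier side,
\[
\Big\|\sum_k af_k\Big\|_{\mathcal G^\sigma_\infty}^2
=\int e^{2\sigma|\xi|_\infty}\Big|\sum_k\widehat{af_k}\Big|^2\,d\xi
\le N_{\varepsilon,n}\sum_k\|af_k\|_{\mathcal G^\sigma_\infty}^2 .
\]
Combining this with Steps 1 and 2 gives
\[
\|af\|_{\mathcal G^\sigma_\infty}\le N_{\varepsilon,n}^{1/2}\,e^{\sigma R+\sigma\ell}\,\|a\|_{L^\infty}\|f\|_{\mathcal G^\sigma_\infty},
\]
with $\sigma R+\sigma\ell=(1+\varepsilon/2)\sigma R+\sigma$. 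This is \eqref{eq-localized-mult-G}, with room to spare.

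The main obstacle is the choice of $\ell$, which must balance two competing losses. Unit cubes would produce an overlap factor $(1+R)^{n/2}$. That factor cannot be absorbed into $e^{\varepsilon\sigma R}$ uniformly as $\sigma\to0$. Cubes of size $\sim R$ would instead cost $e^{C\sigma R}$ with $C$ not close to $1$. Taking $\ell=\tfrac{\varepsilon}{2}R+1$ makes the overlap count depend only on $(\varepsilon,n)$, while the weight loss is only $e^{\varepsilon\sigma R/2+\sigma}$. This is exactly why the statement has the form $(1+\varepsilon)\sigma R+C_{\varepsilon,n}\sigma$.
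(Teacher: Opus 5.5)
Your proof is correct and follows essentially the paper's argument: decompose frequency space at a scale comparable to $\varepsilon R+1$, estimate each piece on the physical side via $\|ag\|_{L^2}\le\|a\|_{L^\infty}\|g\|_{L^2}$, and sum using an overlap count that depends only on $(\varepsilon,n)$. The differences are cosmetic. You use cubes of side $\tfrac{\varepsilon}{2}R+1$ and bounded overlap of the output supports, whereas the paper uses $|\cdot|_\infty$-shells of thickness $h=\varepsilon(1+R)$ and counts, for each output shell, the input shells that interact with it. Your bookkeeping also gives the exponent $(1+\tfrac{\varepsilon}{2})\sigma R+\sigma$ directly, while the paper obtains its form only after rescaling $\varepsilon$.
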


\begin{proof}
Fix $h=\varepsilon(1+R)$ and let
$Q_{h,m}$ denote the  Fourier projection to
\[
 E_{h,m}:=\{\xi\in \R^n: mh\le |\xi|_\infty<(m+1)h\},\qquad m\ge0.
\]
The norm \eqref{def-G-sigma} is equivalent, up to the harmless factor
$e^{C\sigma h}$, to
\[
 \left(\sum_{m\ge0} e^{2\sigma hm}\|Q_{h,m}f\|_2^2\right)^{1/2}.
\]
Since \(\widehat a\) is supported in \(\{|\xi|_\infty\le R\}\), convolution in
frequency shows that \(Q_{h,m}(aQ_{h,\ell}f)\) can be nonzero only when the two
shells satisfy
\[
 |m-\ell|h\le R+Ch,
\]
where the term \(Ch\) accounts for the thickness of the shells.
For such $m$ and $\ell$ one has
$hm\le h\ell+R+Ch$.  Moreover, for each fixed $m$ the number of admissible
indices $\ell$ is bounded by a constant depending only on $\varepsilon$ and $n$.
Using $\|ag\|_2\le\|a\|_\infty\|g\|_2$ and summing in $\ell^2$, we get
\[
 \left(\sum_m e^{2\sigma hm}\|Q_{h,m}(af)\|_2^2\right)^{1/2}
 \le
 C_{\varepsilon,n}e^{\sigma R+C(1+R)\varepsilon\sigma}
 \|a\|_\infty
 \left(\sum_\ell e^{2\sigma h\ell}\|Q_{h,\ell}f\|_2^2\right)^{1/2}.
\]
Returning to the weighted Fourier norm and enlarging the constants gives
\eqref{eq-localized-mult-G}.
\end{proof}

\begin{lemma}[Fourier-side multiplication by \(V\)]\label{lem-V-mult-G}
Assume \eqref{assump-A-derivative}. For every \(\varepsilon\in(0,1)\) there
exists \(C_{\varepsilon,n,\kappa}>0\) such that
\[
        \|Vf\|_{\mathcal G^\sigma_\infty}
        \le
        C_{\varepsilon,n,\kappa}A_0
        \exp\left(
        (1+\varepsilon)(1-\kappa)(\sigma M)^{1/(1-\kappa)}
        +C_{\varepsilon,n,\kappa}\sigma
        \right)
        \|f\|_{\mathcal G^\sigma_\infty} .
\]
If \(\kappa=0\) , the exponential factor is understood as
\[
        (1+\varepsilon)\sigma M+C_{\varepsilon,n}\sigma .
\]
\end{lemma}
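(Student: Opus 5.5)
The plan is to decompose $V$ with the refined annular decomposition $V=V_{\le0}+\sum_{j\ge1}V_j$ and treat each piece with Lemma \ref{lem-L2-localized-mult}. The annular ratio $\rho=1+\delta$ will be tied to $\varepsilon$ at the end. By the triangle inequality,
\[
\|Vf\|_{\mathcal G^\sigma_\infty}\le \|V_{\le0}f\|_{\mathcal G^\sigma_\infty}+\sum_{j\ge1}\|V_jf\|_{\mathcal G^\sigma_\infty}.
\]

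For the low-frequency piece, $\widehat{V_{\le0}}$ is supported in $\{|\xi|_\infty\le 2M\}$ and $\|V_{\le0}\|_\infty\le C_nA_0$ by Lemma \ref{lem-refined-V-decay}. Lemma \ref{lem-L2-localized-mult} therefore gives a factor $C A_0 e^{(1+\varepsilon')2\sigma M+C\sigma}$. By Young's inequality, $2\sigma M\le \eta(\sigma M)^{1/(1-\kappa)}+C_\eta$ for every $\eta>0$, so this factor is absorbed into the main exponential for any $\eta>0$.

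For $j\ge1$, $\widehat{V_j}$ is supported in $\{|\xi|_\infty\le\rho R_j\}$. Lemma \ref{lem-L2-localized-mult}, applied with a parameter $\varepsilon'$, together with Lemma \ref{lem-refined-V-decay}, gives
\[
\|V_jf\|_{\mathcal G^\sigma_\infty}\le C A_0\, e^{C\sigma}\exp\Big((1+\varepsilon')(1+\delta)\sigma R_j-(1-\delta)\kappa (R_j/M)^{1/\kappa}\Big)\|f\|_{\mathcal G^\sigma_\infty}.
\]
Summing over $j$ with Lemma \ref{lem-lacunary-summation}, taking $a=(1+\varepsilon')(1+\delta)$ and $b=1-\delta\ge b_0=1/2$, bounds the sum by
\[
C\exp\Big((1-\kappa)a^{1/(1-\kappa)}(1-\delta)^{-2\kappa/(1-\kappa)}(\sigma M)^{1/(1-\kappa)}\Big).
\]
The main bookkeeping step is to choose $\delta$ and $\varepsilon'$ small, depending on $\varepsilon$ and $\kappa$, so that
\[
a^{1/(1-\kappa)}(1-\delta)^{-2\kappa/(1-\kappa)}\le 1+\varepsilon/2 .
\]
This is possible since the left side tends to $1$ as $\delta,\varepsilon'\to0$. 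The low-frequency contribution then fits in the remaining $\varepsilon/2$ margin. All constants depend only on $\varepsilon,n,\kappa$, and the $e^{C\sigma}$ factors combine into the $C_{\varepsilon,n,\kappa}\sigma$ term.

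When $\kappa=0$, Lemma \ref{lem-refined-V-decay} gives $\operatorname{supp}\widehat V\subset\{|\xi|_\infty\le M\}$ and $\|V\|_\infty\le A_0$. A single application of Lemma \ref{lem-L2-localized-mult} with $R=M$ then yields the factor $C_{\varepsilon,n}A_0e^{(1+\varepsilon)\sigma M+C\sigma}$ directly.

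I expect the main obstacle to be ensuring that the constants multiplying the exponent are genuinely uniform: the $\delta$-dependent constants $C_{\delta,n,\kappa}$ from Lemma \ref{lem-refined-V-decay} and from the summation lemma must be fixed once $\delta=\delta(\varepsilon,\kappa)$ is fixed. They then only enter the prefactor $C_{\varepsilon,n,\kappa}$.
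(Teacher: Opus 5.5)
Your proposal is correct and follows the paper's proof essentially step by step. You use the refined annular decomposition, apply Lemma~\ref{lem-L2-localized-mult} to each piece, absorb the low-frequency factor $e^{C\sigma M}$ into a small multiple of $(\sigma M)^{1/(1-\kappa)}$ by Young's inequality, sum the high-frequency pieces with Lemma~\ref{lem-lacunary-summation}, and fix $\delta$ small in terms of $\varepsilon,\kappa$; the case $\kappa=0$ is a single application of Lemma~\ref{lem-L2-localized-mult} with $R=M$. The only cosmetic difference is that you keep the localization loss $\varepsilon'$ separate, taking $a=(1+\varepsilon')(1+\delta)$, whereas the paper folds it into $\delta$ and uses $a=1+\delta$.
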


\begin{proof}
If \(\kappa=0\), Lemma \ref{lem-refined-V-decay} gives
\[
        \operatorname{supp}\widehat V\subset\{|\xi|_\infty\le M\}.
\]
Applying Lemma \ref{lem-L2-localized-mult} with \(a=V\) and \(R=M\) gives the
claimed endpoint estimate.

Assume \(0<\kappa<1\).  Fix a small \(\delta\in(0,1)\), to be chosen in terms of
\(\varepsilon\), and use the refined decomposition
\[
        V=V_{\le 0}+\sum_{j\ge1}V_j
\]
from Lemma \ref{lem-refined-V-decay}.  The low-frequency part satisfies
\[
        \|V_{\le 0}f\|_{\mathcal G^\sigma_\infty}
        \le
        C_{\delta,n}A_0
        \exp(C_{\delta,n}\sigma+C_{\delta,n}\sigma M)
        \|f\|_{\mathcal G^\sigma_\infty}.
\]
With \(S=(\sigma M)^{1/(1-\kappa)}\), the elementary inequality \(C_{\delta,n}\sigma M\le \delta S+C_{\delta,n}\)
gives
\[
        \|V_{\le 0}f\|_{\mathcal G^\sigma_\infty}
        \le
        C_{\delta,n}A_0e^{C_{\delta,n}\sigma+\delta S}
        \|f\|_{\mathcal G^\sigma_\infty}.
\]

For the high-frequency pieces, Lemmas \ref{lem-L2-localized-mult} and
\ref{lem-refined-V-decay} give, after choosing the loss in
Lemma \ref{lem-L2-localized-mult} and the annular ratio sufficiently small,
\[
        \|V_jf\|_{\mathcal G^\sigma_\infty}
        \le
        C_{\delta,n,\kappa}A_0e^{C_{\delta,n}\sigma}
        \exp\left(
        (1+\delta)\sigma R_j
        -(1-\delta)\kappa\left(\frac{R_j}{M}\right)^{1/\kappa}
        \right)
        \|f\|_{\mathcal G^\sigma_\infty}.
\]
Summing in \(j\) and using Lemma \ref{lem-lacunary-summation} with
\(a=1+\delta\) and \(b=1-\delta\), we get
\begin{align*}
&\sum_{j\ge1}
        \exp\left(
        (1+\delta)\sigma R_j
        -(1-\delta)\kappa\left(\frac{R_j}{M}\right)^{1/\kappa}
        \right)\\
&        \le
        C_{\delta,\kappa}
        \exp\left(
        (1-\kappa)(1+\delta)^{1/(1-\kappa)}
        \bigl((1-\delta)^2\bigr)^{-\kappa/(1-\kappa)}S
        \right).    
\end{align*}
Combining the low- and high-frequency estimates yields
\[
        \|Vf\|_{\mathcal G^\sigma_\infty}
        \le
        C_{\delta,n,\kappa}A_0e^{C_{\delta,n,\kappa}\sigma}
        \exp\left(
        \delta S+
        (1-\kappa)(1+\delta)^{1/(1-\kappa)}
        \bigl((1-\delta)^2\bigr)^{-\kappa/(1-\kappa)}S
        \right)
        \|f\|_{\mathcal G^\sigma_\infty}.
\]
Choose \(\delta=\delta(\varepsilon,\kappa)>0\) sufficiently small so that
\[
        \delta+
        (1-\kappa)(1+\delta)^{1/(1-\kappa)}
        \bigl((1-\delta)^2\bigr)^{-\kappa/(1-\kappa)}
        \le (1+\varepsilon)(1-\kappa) .
\]
This proves the result.
\end{proof}

\begin{proposition}[Fourier-side weighted estimate and derivative bounds]
\label{prop-L2-apriori-G}
Let \(u\in L^2(\R^n)\) solve \(\Lambda^s u=Vu\) and \(\|u\|_2=1\).  For every
\(\theta>1\) there exist constants \(C_\theta,A_\theta,K_\theta>0\), depending
only on \(s,n,\kappa\) and \(\theta\), such that the following estimates hold.

First, for every \(\sigma>0\),
\begin{align}\label{eq-U-L2-apriori-detailed}
        \|u\|_{\mathcal G^\sigma_\infty}
        \le
        C_\theta\exp\big(C_\theta\sigma N_\sigma\big),
\end{align}
where
\begin{align}\label{eq-Nsigma-frac-L2-detailed}
        N_\sigma:=
        A_\theta(1+A_0^{1/s})
        \exp\left(
        \frac{\theta_*(1-\kappa)}{s}(\sigma M)^{1/(1-\kappa)}
        +C_\theta\sigma
        \right),
        \qquad
        \theta_*:=\frac{1+\theta}{2}.
\end{align}
Moreover, with
\begin{align}\label{eq-Btheta-L2-Fourier}
        B_\theta:=
        \theta M\left(\frac{1-\kappa}{s}\right)^{1-\kappa}
        +K_\theta(1+A_0^{1/s}),
\end{align}
we have
\begin{align}\label{eq-L2-derivative-from-Fourier-side}
        \|D^\alpha u\|_{L^2(\R^n)}
        \le
        C_\theta B_\theta^{|\alpha|}
        \frac{|\alpha|!}{\big(\log(|\alpha|+e)\big)^{(1-\kappa)|\alpha|}}
\end{align}
for every \(\alpha\in\N^n\).
\end{proposition}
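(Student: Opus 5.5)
Our plan follows the low--high frequency absorption scheme described in the strategy section, carried out in two stages: first the weighted bound \eqref{eq-U-L2-apriori-detailed}, then the derivative bound by optimizing over $\sigma$.

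\emph{Step 1: the weighted bound.} Fix $\sigma>0$ and a threshold $N\ge1$. Split $\widehat u=\mathbf 1_{\{|\xi|_\infty\le N\}}\widehat u+\mathbf 1_{\{|\xi|_\infty>N\}}\widehat u$.
\begin{itemize}
\item Low part: by Plancherel its $\mathcal G^\sigma_\infty$ norm is at most $e^{\sigma N}\|u\|_2=e^{\sigma N}$.
\item High part: $|\xi|\ge|\xi|_\infty>N$ there, so the equation $|\xi|^s\widehat u=\widehat{Vu}$ gives a high-part norm of at most $N^{-s}\|Vu\|_{\mathcal G^\sigma_\infty}$.
\item Apply Lemma~\ref{lem-V-mult-G} with $\varepsilon$ chosen so that $1+\varepsilon\le\theta_*$.
\end{itemize}
This yields
\[
\|u\|_{\mathcal G^\sigma_\infty}\le e^{\sigma N}+N^{-s}C_\varepsilon A_0\exp\bigl(\theta_*(1-\kappa)(\sigma M)^{1/(1-\kappa)}+C\sigma\bigr)\|u\|_{\mathcal G^\sigma_\infty}.
\]
Choosing $N=N_\sigma$ as in \eqref{eq-Nsigma-frac-L2-detailed}, with $A_\theta$ large, makes the coefficient at most $1/2$. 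Note that $(1+A_0^{1/s})^s\ge A_0$, and the constant in the exponent is divided by $s$, which matches the displayed form. Absorbing gives \eqref{eq-U-L2-apriori-detailed} with $C_\theta=2$, up to constants.

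The main obstacle here is that absorption needs $\|u\|_{\mathcal G^\sigma_\infty}<\infty$ a priori, and this is not known. To handle it, I would run the argument with the truncated weight $e^{\sigma\min(|\xi|_\infty,T)}$ in place of the full weight. Lemma~\ref{lem-L2-localized-mult}, and hence Lemma~\ref{lem-V-mult-G}, still holds uniformly in $T$, because $\min(\cdot,T)$ is subadditive under frequency shifts, exactly as in the shell-counting proof. The truncated norm is finite since $u\in L^2$. We then let $T\to\infty$ by monotone convergence. (For $T<N$ the high part is handled the same way.)

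\emph{Step 2: derivative bounds.} By Plancherel,
\[
\|D^\alpha u\|_2\le\bigl\| |\xi|^{|\alpha|}\widehat u\bigr\|_2\le\sup_{r}r^m e^{-\sigma r/\sqrt n}\cdot\|u\|_{\mathcal G^\sigma_\infty},\qquad m=|\alpha|.
\]
We use $\sup_r r^m e^{-\sigma r}\le m!\,\sigma^{-m}$, and the dimensional factor $\sqrt n$ is absorbed after comparing $|\xi|$ with $|\xi|_\infty$. Combining with Step 1,
\[
\|D^\alpha u\|_2\le C\,m!\,\sigma^{-m}\exp(C\sigma N_\sigma).
\]
Choose
\[
\sigma=\frac{\log^{1-\kappa}(m+e)}{B'},\qquad B'=\theta' M\bigl((1-\kappa)/s\bigr)^{1-\kappa},
\]
with $\theta_*<\theta'<\theta$. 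Then $\frac{\theta_*(1-\kappa)}{s}(\sigma M)^{1/(1-\kappa)}=(\theta_*/\theta'^{1/(1-\kappa)})\log(m+e)$, so this term contributes $(m+e)^{\eta}$ with $\eta<1$. Hence $\sigma N_\sigma\lesssim \sigma(1+A_0^{1/s})e^{C\sigma}(m+e)^\eta$.

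To keep $\exp(C\sigma N_\sigma)$ at most $K^m$, I would enlarge $B'$ by $K_\theta(1+A_0^{1/s})$. This makes $\sigma\le \log^{1-\kappa}(m+e)/(K(1+A_0^{1/s}))$, so $C\sigma N_\sigma\lesssim K^{-1}\log(m+e)\,(m+e)^\eta e^{C\sigma}\ll m$. The factor $e^{C\sigma}$ is harmless because $\sigma$ grows only logarithmically.

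The delicate bookkeeping is to ensure that the $A_0$-dependence appears only additively in $B_\theta$, as in \eqref{eq-Btheta-L2-Fourier}. I would check this by splitting the bound into the cases where $M$-term or the $K$-term dominates $B_\theta$. Finally, $\sigma^{-m}\le B_\theta^m/\log^{(1-\kappa)m}(m+e)$, which gives \eqref{eq-L2-derivative-from-Fourier-side}.
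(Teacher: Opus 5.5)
\textbf{Overall.} Your route is the paper's proof. You truncate the weight to $e^{\sigma\min(|\xi|_\infty,T)}$ to justify absorption, split at $N$, and apply Lemma~\ref{lem-V-mult-G} with $1+\varepsilon\le\theta_*$. You then choose $N=N_\sigma$ and finally take $\sigma\simeq B^{-1}\log^{1-\kappa}(m+e)$. Your sharp cutoff $\mathbf 1_{\{|\xi|_\infty\le N\}}$ in place of the paper's smooth $P_{\le N}$ is fine in $L^2$. Step 1 is correct as written.

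\textbf{The derivative extraction fails as written.} Bounding $|\xi^\alpha|\le|\xi|^m$ and then using $e^{-\sigma|\xi|_\infty}\le e^{-\sigma|\xi|/\sqrt n}$ gives
$\sup_\xi|\xi|^m e^{-\sigma|\xi|_\infty}=n^{m/2}\bigl(m/(e\sigma)\bigr)^m$; the supremum is attained along the diagonal $\xi=(t,\dots,t)$. The factor $n^{m/2}$ is exponential in $m$, so it cannot be absorbed into $C_\theta$. It ends up multiplying the base, and the leading type becomes $\sqrt n\,\theta M\bigl(\frac{1-\kappa}{s}\bigr)^{1-\kappa}$. For $n\ge2$ that is not the statement, and it is not removable by the arbitrary $\theta>1$.

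The fix is to never pass through $|\xi|$. Since $|\xi^\alpha|\le|\xi|_\infty^{|\alpha|}$, you get directly $\|D^\alpha u\|_2\le\sup_{r\ge0}r^me^{-\sigma r}\,\|u\|_{\mathcal G^\sigma_\infty}\le m!\,\sigma^{-m}\|u\|_{\mathcal G^\sigma_\infty}$, which is exactly what the paper does.

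\textbf{The bookkeeping on $\exp(C\sigma N_\sigma)$ needs sharpening.} Taken literally, ``keep $\exp(C\sigma N_\sigma)$ at most $K^m$'' with a fixed $K>1$ also spoils the leading coefficient. What your own computation actually shows is $C\sigma_mN_{\sigma_m}=o(m)$, uniformly in $M$ and $A_0$. Hence $\exp(C\sigma_mN_{\sigma_m})\le C_{\eta'}(1+\eta')^m$ for every $\eta'>0$. This factor must be absorbed by the slack you built in, namely $(1+\eta')\theta'\le\theta$, taking $K_\theta=(1+\eta')K$. The paper does the same thing by putting the factor into $\sigma_m=(1+\eta)B_\theta^{-1}\log^{1-\kappa}(m+e)$.

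\textbf{The endpoint $\kappa=0$.} Here the factor $e^{C\sigma}$ is not negligible merely because $\sigma$ grows logarithmically. It equals $(m+e)^{C/K}$, so you need $\theta_*/\theta'+C/K<1$, which holds for $K$ large.

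\textbf{No case split is needed.} Splitting into the cases where the $M$-term or the $K$-term dominates $B_\theta$ is unnecessary. The bounds $\sigma\le\log^{1-\kappa}(m+e)/B'$ and $\sigma\le\log^{1-\kappa}(m+e)/\bigl(K(1+A_0^{1/s})\bigr)$ hold simultaneously, and each controls its own term.
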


\begin{proof}
We first justify the a priori use of the weighted Fourier norm.  For \(L\ge1\), define
the truncated weighted norm
\[
    \|f\|_{\mathcal G^{\sigma,L}_\infty}
    :=
    \left(
        \int_{\mathbb R^n}
        e^{2\sigma(|\xi|_\infty\wedge L)}
        |\widehat f(\xi)|^2\,\d\xi
    \right)^{1/2}, \quad |\xi|_\infty\wedge L=\min\{|\xi|_\infty, L\}.
\]
This norm is finite for every \(f\in L^2\).  The proofs of the Fourier-side localized
multiplication estimate (Lemma~\ref{lem-L2-localized-mult}) and the
multiplication estimate by \(V\) (Lemma~\ref{lem-V-mult-G}) remain valid with
\(\mathcal G^\sigma_\infty\) replaced with
\(\mathcal G^{\sigma,L}_\infty\), with constants independent of \(L\).  Indeed, the only
additional point is the elementary inequality
\[
    |\xi+\eta|_\infty\wedge L
    \le
    (|\xi|_\infty\wedge L)+|\eta|_\infty .
\]
Therefore, the absorption argument below is first understood in the truncated norm
\(\mathcal G^{\sigma,L}_\infty\).  To avoid cumbersome notation, we write
\(\mathcal G^\sigma_\infty\), \(U_\sigma\), and \(Y_\sigma\) throughout the argument.
All constants are independent of \(L\), and the full weighted estimate follows at the end
by letting \(L\to\infty\).

We first prove the Fourier-side weighted estimate.  Choose
\(\varepsilon>0\) sufficiently small so that \(1+\varepsilon\le\theta_*\).   Set
\[
        U_\sigma:=\|u\|_{\mathcal G^\sigma_\infty},
        \qquad
        Y_\sigma:=\|\Lambda^s u\|_{\mathcal G^\sigma_\infty}.
\]
By the equation and Lemma \ref{lem-V-mult-G},
\begin{align}\label{eq-YU-L2-detailed}
        Y_\sigma
        \le
        C_\theta A_0
        \exp\left(
        \theta_*(1-\kappa)(\sigma M)^{1/(1-\kappa)}
        +C_\theta\sigma
        \right)
        U_\sigma.
\end{align}
Let \(P_{\le N}\) be a smooth Fourier projection to
\(\{|\xi|_\infty\le2N\}\) and put \(P_{>N}=I-P_{\le N}\).  Since
\(\|u\|_2=1\),
\begin{align}\label{eq-L2-low-frequency-bound}
        \|P_{\le N}u\|_{\mathcal G^\sigma_\infty}
        \le e^{2\sigma N}.
\end{align}
Moreover, on the support of \(P_{>N}\) one has \(|\xi|\gtrsim N\), and hence
\begin{align}\label{eq-L2-high-frequency-bound}
        \|P_{>N}u\|_{\mathcal G^\sigma_\infty}
        \le
        C_sN^{-s}\|\Lambda^s u\|_{\mathcal G^\sigma_\infty}
        =
        C_sN^{-s}Y_\sigma.
\end{align}
Combining \eqref{eq-L2-low-frequency-bound} and
\eqref{eq-L2-high-frequency-bound}, we obtain
\begin{align}\label{eq-L2-low-high-combined}
        U_\sigma
        \le e^{2\sigma N}+C_sN^{-s}Y_\sigma.
\end{align}
Using \eqref{eq-YU-L2-detailed} in \eqref{eq-L2-low-high-combined}, we get
\begin{align}\label{eq-L2-absorption-before-choice}
        U_\sigma
        \le e^{2\sigma N}
        +
        C_\theta A_0N^{-s}
        \exp\left(
        \theta_*(1-\kappa)(\sigma M)^{1/(1-\kappa)}
        +C_\theta\sigma
        \right)
        U_\sigma.
\end{align}
Choose \(N=N_\sigma\) as in \eqref{eq-Nsigma-frac-L2-detailed}, with
\(A_\theta\) sufficiently large.  Then the coefficient of \(U_\sigma\) on the
right-hand side of \eqref{eq-L2-absorption-before-choice} is at most \(1/2\).
Therefore,
\[
        U_\sigma\le 2e^{2\sigma N_\sigma},
\]
which proves \eqref{eq-U-L2-apriori-detailed}, after increasing \(C_\theta\). The estimate above is first obtained for
\(\|u\|_{\mathcal G^{\sigma,L}_\infty}\), uniformly in \(L\).  Letting \(L\to\infty\)
and using monotone convergence proves the stated estimate in
\(\mathcal G^\sigma_\infty\).

We now extract the derivative estimate directly from the full family of
Fourier-side weighted estimates. Let \(m=|\alpha|\ge1\).  By Plancherel and the definition of
\(\mathcal G^\sigma_\infty\),
\[
\begin{aligned}
        \|D^\alpha u\|_2
        &\le
        C\sup_{r\ge0}r^m e^{-\sigma r}
        \|u\|_{\mathcal G^\sigma_\infty}        =C
        \left(\frac{m}{e\sigma}\right)^m
        \|u\|_{\mathcal G^\sigma_\infty}
        \le
        C\frac{m!}{\sigma^m}
        \|u\|_{\mathcal G^\sigma_\infty}.
\end{aligned}
\]
Hence, by \eqref{eq-U-L2-apriori-detailed},
\begin{align}\label{eq-L2-direct-derivative-sigma}
        \|D^\alpha u\|_2
        \le
        C_\theta
        \frac{m!}{\sigma^m}
        \exp\big(C_\theta\sigma N_\sigma\big).
\end{align}

Choose a small \(\eta>0\) such that
\[
        \theta_*\theta^{-1/(1-\kappa)}(1+\eta)^{1/(1-\kappa)}<1.
\]
Set
\[
        \sigma_m:=
        \frac{1+\eta}{B_\theta}
        \big(\log(m+e)\big)^{1-\kappa} .
\]
Since \(B_\theta\ge \theta M\left(\frac{1-\kappa}{s}\right)^{1-\kappa}\), we have
\begin{align}\label{eq-L2-q-choose}
        \frac{\theta_*(1-\kappa)}{s}(\sigma_m M)^{1/(1-\kappa)}
        \le
        \theta_*\theta^{-1/(1-\kappa)}(1+\eta)^{1/(1-\kappa)}\log(m+e).
\end{align}
By choosing \(K_\theta\) sufficiently large in \eqref{eq-Btheta-L2-Fourier}, the
term \(C_\theta\sigma_m\) in \eqref{eq-Nsigma-frac-L2-detailed} can be absorbed
into an arbitrarily small multiple of \(\log(m+e)\).  Hence by \eqref{eq-L2-q-choose} there exists
\(q=q_\theta=\theta_*\theta^{-1/(1-\kappa)}(1+\eta)^{1/(1-\kappa)}\in(0,1)\) such that
\[
        N_{\sigma_m}
        \le
        C_\theta(1+A_0^{1/s})(m+e)^q .
\]
Since \(B_\theta\ge K_\theta(1+A_0^{1/s})\), it follows that
\[
        \sigma_m N_{\sigma_m}
        \le
        C_\theta
        \big(\log(m+e)\big)^{1-\kappa} (m+e)^q
        =
        o(m).
\]
After increasing \(C_\theta\) to handle finitely many small values of \(m\), we
therefore have
\[
        \exp\big(C_\theta\sigma_mN_{\sigma_m}\big)\le C(1+\eta)^m.
\]
Substituting \(\sigma=\sigma_m\) into
\eqref{eq-L2-direct-derivative-sigma}, we obtain
\[
\begin{aligned}
        \|D^\alpha u\|_2
        &\le
        C_\theta
        \frac{m!}{\sigma_m^m}
        (1+\eta)^m  =
        C_\theta
        B_\theta^m
        \frac{m!}{\big(\log(m+e)\big)^{(1-\kappa)m}} .
\end{aligned}
\]
This proves \eqref{eq-L2-derivative-from-Fourier-side} for \(m\ge1\).  The case
\(m=0\) follows from \(\|u\|_2=1\), after enlarging \(C_\theta\).
\end{proof}

\begin{remark}[A special case] The logarithmic scale obtained above is sharp for the bounded pure-potential class considered in this paper. It does not exclude stronger ultra-analyticity in special situations outside this class. For example, if \(u\in L^2(\mathbb R^n)\) is an eigenfunction of the harmonic oscillator \(H=-\Delta+|x|^2\) with eigenvalue \(\lambda\), then \[ \Delta u=(|x|^2-\lambda)u . \] Here \(V(x)=|x|^2-\lambda\) is unbounded and hence does not satisfy our coefficient assumptions. The oscillator eigenfunctions satisfy ultra-analytic bounds corresponding to \eqref{equ-ultra-f-gamma} with \(\kappa=1/2\) and \(p=2\); see \cite{BeauchardJamingPravdaStarov21}. Thus this stronger regularity comes from the special spectral structure of the harmonic oscillator, not from a general inheritance principle for bounded potentials. \end{remark}

\subsection[Frequency-uniform Lp proof]{Frequency-uniform \texorpdfstring{$L^p$}{Lp} proof for \texorpdfstring{$p\ne2$}{}}\label{subsec-Lp-proof}

We now prove the remaining cases \(1\le p\le\infty\), \(p\ne2\). Unlike the \(L^2\) case, the argument cannot rely on Plancherel's theorem. We instead use frequency-uniform analytic norms, together with localized multiplication and low--high frequency estimates adapted to \(L^p\). The frequency scale has to be chosen with some care. A fixed unit-scale norm would introduce polynomial losses in the type parameter \(M\). To keep the sharp fractional scale, we use a scale-adapted norm in the proof; the final derivative extraction remains uniform in this scale.

Let $\psi\in C_c^\infty([-2,2]^n)$ satisfy
\begin{align}\label{eq-Lp-psi-defi}
 \sum_{k\in\Z^n}\psi(\xi-k)=1.  
\end{align} 
For $h\ge1$ set
\begin{align}\label{eq-Lp-Box-defi}
 \Box_{h,k}f=\mathcal F^{-1}\big[\psi(\xi/h-k)\widehat f(\xi)\big],
 \qquad k\in\Z^n.
\end{align}
For $\sigma>0$ define the scale-$h$ frequency-uniform analytic norm
\begin{align}\label{def-Up-sigma}
 \|f\|_{\mathcal U^\sigma_{p,h}}
 :=
 \sup_{k\in\Z^n}
 e^{\sigma h|k|_\infty}
 \|\Box_{h,k} f\|_{L^p}.
\end{align}
The scale \(h\) will be chosen comparable to \(1+M\), with the proportionality
factor depending only on the auxiliary parameter \(\varepsilon\). This choice removes the unwanted polynomial dependence
on $M$ while preserving the sharp logarithmic coefficient.

\begin{lemma}[Frequency localized multiplication]\label{lem-localized-mult}
Let $1\le p\le\infty$, $\sigma>0$, and let $a\in L^\infty(\R^n)$ satisfy
\[
 \operatorname{supp}\widehat a\subset\{\xi:|\xi|_\infty\le R\}.
\]
Then, for every $h\ge1$,
\begin{align}\label{eq-localized-mult-h}
 \|af\|_{\mathcal U^\sigma_{p,h}}
 \le
 C_n\left(1+\frac{R}{h}\right)^n
 e^{\sigma R+C_n\sigma h}
 \|a\|_{L^\infty}
 \|f\|_{\mathcal U^\sigma_{p,h}}.
\end{align}
\end{lemma}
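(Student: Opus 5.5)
The plan is to prove \eqref{eq-localized-mult-h} by decomposing $f$ into its scale-$h$ frequency pieces, controlling each interaction $\Box_{h,k}(a\,\Box_{h,\ell}f)$ separately, and then summing over the finitely many $\ell$ that can interact with a given $k$. Start from
\[
f=\sum_{\ell\in\Z^n}\Box_{h,\ell}f,
\]
which follows from \eqref{eq-Lp-psi-defi}; convergence is in $\mathcal S'$, and under the a priori finiteness of the norm it is in the sense needed here. Then
\[
\Box_{h,k}(af)=\sum_{\ell}\Box_{h,k}\big(a\,\Box_{h,\ell}f\big).
\]

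The first step is support bookkeeping. The Fourier transform of $a\,\Box_{h,\ell}f$ is supported in
\[
h\ell+[-2h,2h]^n+\{|\eta|_\infty\le R\}.
\]
The multiplier $\psi(\cdot/h-k)$ lives on $hk+[-2h,2h]^n$. Hence the $(k,\ell)$ term vanishes unless
\[
h|k-\ell|_\infty\le R+4h.
\]
For fixed $k$, the number of admissible $\ell$ is therefore at most $(2(R/h+4)+1)^n\le C_n(1+R/h)^n$. This count produces the polynomial prefactor in the statement.

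The second step bounds each interacting term in $L^p$. Since $\Box_{h,k}$ has kernel $h^n\check\psi(h\,\cdot)\,e^{ihk\cdot x}$, its $L^1$ norm is independent of $h$ and $k$. This gives
\[
\|\Box_{h,k}(a\,\Box_{h,\ell}f)\|_{L^p}\le C_n\|a\|_{L^\infty}\|\Box_{h,\ell}f\|_{L^p}.
\]
The third step moves the weight from $k$ to $\ell$. For admissible $\ell$ we have $h|k|_\infty\le h|\ell|_\infty+R+4h$, so
\[
e^{\sigma h|k|_\infty}\le e^{\sigma R+4\sigma h}\,e^{\sigma h|\ell|_\infty},
\]
and consequently
\[
e^{\sigma h|\ell|_\infty}\|\Box_{h,\ell}f\|_{L^p}\le\|f\|_{\mathcal U^\sigma_{p,h}}.
\]
Summing over the at most $C_n(1+R/h)^n$ admissible $\ell$ and taking the supremum over $k$ yields \eqref{eq-localized-mult-h} with $C_n\sigma h=4\sigma h$.

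The only delicate point I expect is justifying the interchange of the sum over $\ell$ with $\Box_{h,k}$ and the multiplication by $a$ when $p=\infty$ or $p=1$, where $\sum_\ell\Box_{h,\ell}f$ need not converge in norm. This is only a technicality, since for fixed $k$ just finitely many $\ell$ contribute. I would handle it as follows. Let $\tilde\psi\in C_c^\infty$ equal $1$ on $[-2-R/h-2,\,2+R/h+2]^n$ after translation. Then $\Box_{h,k}(af)=\Box_{h,k}\big(a\,\tilde P_kf\big)$, where $\tilde P_k$ is the finite sum $\sum_{\text{admissible }\ell}\Box_{h,\ell}$. This identity holds in $\mathcal S'$ because
\[
\widehat{af}=\widehat a*\widehat f
\]
restricted to $hk+[-2h,2h]^n$ only sees $\widehat f$ on the $(R+2h)$-neighborhood, where the admissible $\psi(\cdot/h-\ell)$ sum to one. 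It is valid for $f$ tempered, and in the applications of this lemma $f$ is in $L^p$. With that identity in hand, the argument is a finite sum and everything above applies verbatim.
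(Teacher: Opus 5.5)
Your proposal is correct and follows the paper's proof essentially step for step. The common steps are the support condition $h|k-\ell|_\infty\le R+Ch$, the uniform $L^1$ bound on the kernels of $\Box_{h,k}$ combined with $\|ag\|_{L^p}\le\|a\|_{L^\infty}\|g\|_{L^p}$, the weight transfer $e^{\sigma h|k|_\infty}\le e^{\sigma R+C\sigma h}e^{\sigma h|\ell|_\infty}$, and the count of $C_n(1+R/h)^n$ interacting blocks; you only swap the roles of $k$ and $\ell$ and make the constant explicit ($C=4$). Your finite-sum reduction $\Box_{h,k}(af)=\Box_{h,k}(a\tilde P_kf)$, which avoids convergence issues for $\sum_\ell\Box_{h,\ell}f$ when $p\in\{1,\infty\}$, is a detail the paper leaves implicit, and it is sound.
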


\begin{proof}
The Fourier support of $a\Box_{h,k}f$ is contained in the $R$-neighborhood of
that of $\Box_{h,k}f$.  Therefore,
\[
 \Box_{h,\ell}(a\Box_{h,k}f)=0
\]
unless
\[
 h|\ell-k|_\infty\le R+C_nh.
\]
Let $L=C_n+R/h$.  Using the uniform $L^p$ boundedness of the projections
$\Box_{h,\ell}$ and the estimate $\|ag\|_p\le\|a\|_\infty\|g\|_p$, we obtain
\[
 \|\Box_{h,\ell}(af)\|_p
 \le C_n\|a\|_\infty
 \sum_{|\ell-k|_\infty\le L}\|\Box_{h,k}f\|_p.
\]
For such $k$ and $\ell$,
\[
 h|\ell|_\infty\le h|k|_\infty+R+C_nh.
\]
Thus
\[
 e^{\sigma h|\ell|_\infty}\|\Box_{h,\ell}(af)\|_p
 \le C_ne^{\sigma R+C_n\sigma h}\|a\|_\infty
 \sum_{|\ell-k|_\infty\le L}
 e^{\sigma h|k|_\infty}\|\Box_{h,k}f\|_p.
\]
Taking the supremum in $\ell$ gives \eqref{eq-localized-mult-h}, since there are
at most $C_n(1+L)^n$ indices $k$ in the sum.
\end{proof}

\begin{lemma}[Scale-adapted multiplication by \(V\)]\label{lem-V-mult-weighted}
Let \(1\le p\le\infty\) and assume that \(V\) satisfies
\eqref{assump-A-derivative}. For every \(\varepsilon\in(0,1)\) there exist
\(\delta=\delta(\varepsilon,n,\kappa)\in(0,1)\) and
\(C_{\varepsilon,n,\kappa}>0\) such that, with
\[
        h_M:=1+\delta M,
\]
multiplication by \(V\) satisfies
\begin{align}\label{eq-V-mult-symbolic}
        \|Vf\|_{\mathcal U^\sigma_{p,h_M}}
        \le
        C_{\varepsilon,n,\kappa}A_0
        \exp\left((1+\varepsilon)(1-\kappa)(\sigma M)^{1/(1-\kappa)}
        +C_{\varepsilon,n,\kappa}\sigma\right)
        \|f\|_{\mathcal U^\sigma_{p,h_M}}.
\end{align}
In the case \(\kappa=0\), the factor in the exponential is understood as
\[
        (1+\varepsilon)\sigma M+C_{\varepsilon,n}\sigma.
\]
\end{lemma}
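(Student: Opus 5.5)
The plan is to mirror the proof of Lemma~\ref{lem-V-mult-G}, replacing Lemma~\ref{lem-L2-localized-mult} by Lemma~\ref{lem-localized-mult} and keeping track of the polynomial factor $(1+R/h)^n$.

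\textbf{The case $\kappa=0$.} Lemma~\ref{lem-refined-V-decay} gives $\operatorname{supp}\widehat V\subset\{|\xi|_\infty\le M\}$. Applying Lemma~\ref{lem-localized-mult} with $a=V$, $R=M$ and $h=h_M=1+\delta M$ yields the factor
\[
C_n(1+\delta^{-1})^n e^{\sigma M+C_n\sigma(1+\delta M)}.
\]
Choosing $\delta\le\varepsilon/C_n$ turns the exponent into $(1+\varepsilon)\sigma M+C_n\sigma$, which is the claim.

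\textbf{The case $0<\kappa<1$: decomposition.} Fix a small $\delta'$, used as the annular ratio, and decompose $V=V_{\le0}+\sum_{j\ge1}V_j$ as in Lemma~\ref{lem-refined-V-decay}.

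\textbf{Low-frequency piece.} $V_{\le0}$ has support radius $2M$. Lemma~\ref{lem-localized-mult} gives the factor
\[
C(1+2M/h_M)^n e^{2\sigma M+C\sigma h_M}\lesssim_\delta e^{C\sigma+C\sigma M}.
\]
The term $C\sigma M$ is absorbed as before, via $C\sigma M\le\delta' S+C$ with $S=(\sigma M)^{1/(1-\kappa)}$.

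\textbf{High-frequency pieces.} $V_j$ has support radius $\rho R_j$. Lemma~\ref{lem-localized-mult} gives
\[
\|V_jf\|\le C(1+\rho R_j/h_M)^n e^{\rho\sigma R_j+C_n\sigma\delta M+C\sigma}\,A_0 e^{-(1-\delta')\kappa (R_j/M)^{1/\kappa}}\|f\|.
\]
Three pieces of this bound need to be handled.
\begin{itemize}
\item The polynomial factor satisfies $(1+\rho R_j/h_M)^n\le C_\delta(R_j/M)^n$. This is exactly why $h$ is tied to $M$: with $h$ fixed, a power $M^n$ would appear. The power $(R_j/M)^n$ is absorbed into the Gaussian-type decay by lowering $(1-\delta')$ to $(1-2\delta')$, at the cost of a constant $C_{\delta',\kappa,n}$.
\item The term $C_n\sigma\delta M$ is absorbed into $\delta S+C$, as for the low-frequency piece.
\item The main term $e^{\rho\sigma R_j}$ is summed over $j$ with Lemma~\ref{lem-lacunary-summation}, taking $a=1+\delta'$ and $b=1-2\delta'$. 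This gives
\[
\exp\bigl((1-\kappa)(1+\delta')^{1/(1-\kappa)}((1-\delta')(1-2\delta'))^{-\kappa/(1-\kappa)}S\bigr).
\]
\end{itemize}

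\textbf{Choice of parameters.} Collecting the exponents, we get a coefficient of $S$ of the form $(1-\kappa)(1+o(1))+O(\delta+\delta')$ as $\delta,\delta'\to0$. Choosing $\delta,\delta'$ small in terms of $\varepsilon,\kappa$ makes this at most $(1+\varepsilon)(1-\kappa)$. The remaining linear terms in $\sigma$ go into $C_{\varepsilon,n,\kappa}\sigma$.

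\textbf{Main obstacle.} The delicate point is the uniformity of the constants in $M$. The $M$-dependence enters only through the factors $R/h_M$ and $\sigma h_M$. Both are handled by the scale choice $h_M\simeq\delta M$: the first reduces to powers of $R_j/M$, and the second to an $\varepsilon$-fraction of $\sigma M$, hence of $S$ up to an additive constant. I would check carefully that the absorption $C\sigma\delta M\le \delta S+C_\delta$ holds with constants independent of $M$ and $\sigma$. It does, since it is an inequality in the single variable $x=\sigma M$: for $x\ge1$ one has $x\le x^{1/(1-\kappa)}$, and for $x<1$ the left side is bounded.
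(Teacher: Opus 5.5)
Your proposal is correct and follows essentially the same route as the paper. For $\kappa=0$ it uses the support bound from Lemma~\ref{lem-refined-V-decay} and Lemma~\ref{lem-localized-mult} at scale $h_M=1+\delta M$. For $0<\kappa<1$ it uses the refined annular decomposition, absorbs the overlap factor $(1+R_j/h_M)^n\lesssim_\delta (R_j/M)^n$ into the decay and $\sigma h_M$ into $\delta S$ plus a constant, and sums with Lemma~\ref{lem-lacunary-summation} taking $a=1+\delta'$, $b=1-2\delta'$.

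The differences are cosmetic. You decouple the scale parameter $\delta$ from the annular ratio $\delta'$, whereas the paper uses a single $\delta$ for both. Also, the low-frequency absorption $C\sigma M\le\delta' S+C_{\delta'}$, where $C$ is not small, needs Young's inequality; your closing argument $x\le x^{1/(1-\kappa)}$ for $x\ge1$ only covers the term whose coefficient is already $O(\delta)$.
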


\begin{proof}
We first treat the endpoint \(\kappa=0\).  By Lemma \ref{lem-refined-V-decay},
\[
        \operatorname{supp}\widehat V\subset\{|\xi|_\infty\le M\}.
\]
Applying \eqref{eq-localized-mult-h} with \(a=V\), \(R=M\), and
\(h=h_M=1+\delta M\), we get
\[
        \|Vf\|_{\mathcal U^\sigma_{p,h_M}}
        \le
        C_{\delta,n}A_0
        e^{(1+C_n\delta)\sigma M+C_n\sigma}
        \|f\|_{\mathcal U^\sigma_{p,h_M}}.
\]
Choosing \(\delta\) sufficiently small in terms of \(\varepsilon\) gives the
endpoint estimate.

Assume now that \(0<\kappa<1\).  Use the refined decomposition
\[
        V=V_{\le 0}+\sum_{j\ge1}V_j
\]
from Lemma \ref{lem-refined-V-decay}, with the same parameter \(\delta\).  The
low-frequency part satisfies
\[
        \|V_{\le 0}\|_\infty\le C_nA_0,
        \qquad
        \operatorname{supp}\widehat{V_{\le 0}}
        \subset\{|\xi|_\infty\le 2M\}.
\]
Thus \eqref{eq-localized-mult-h} gives
\[
        \|V_{\le 0}f\|_{\mathcal U^\sigma_{p,h_M}}
        \le
        C_{\delta,n}A_0
        \exp(C_{\delta,n}\sigma+C_{\delta,n}\sigma M)
        \|f\|_{\mathcal U^\sigma_{p,h_M}}.
\]
With \(S=(\sigma M)^{1/(1-\kappa)}\), the inequality
\(C_{\delta,n}\sigma M\le \delta S+C_{\delta,n}\) gives
\begin{align}\label{eq-V-low-mult-Up-final}
        \|V_{\le 0}f\|_{\mathcal U^\sigma_{p,h_M}}
        \le
        C_{\delta,n}A_0e^{C_{\delta,n}\sigma+\delta S}
        \|f\|_{\mathcal U^\sigma_{p,h_M}}.
\end{align}

For the high-frequency pieces, Lemma \ref{lem-refined-V-decay} gives
\[
        \|V_j\|_\infty
        \le
        C_{\delta,n,\kappa}A_0
        \exp\left(
        -(1-\delta)\kappa\left(\frac{R_j}{M}\right)^{1/\kappa}
        \right),
\]
and
\[
        \operatorname{supp}\widehat{V_j}
        \subset\{|\xi|_\infty\le (1+\delta)R_j\}.
\]
Applying \eqref{eq-localized-mult-h} to \(V_j\) gives
\[
\begin{aligned}
        \|V_jf\|_{\mathcal U^\sigma_{p,h_M}}
        &\le
        C_{\delta,n,\kappa}A_0
        \left(1+\frac{R_j}{h_M}\right)^{C_n}
        \exp\left((1+\delta)\sigma R_j+C_n\sigma h_M\right) \\
        &\quad\times
        \exp\left(
        -(1-\delta)\kappa\left(\frac{R_j}{M}\right)^{1/\kappa}
        \right)
        \|f\|_{\mathcal U^\sigma_{p,h_M}}.
\end{aligned}
\]
The overlap factor \(\bigl(1+R_j/h_M\bigr)^{C_n}\) is lower order compared with
\[
\exp\left(c\left(\frac{R_j}{M}\right)^{1/\kappa}\right),
\]
and can therefore be absorbed into the exponential decay term, at the cost of replacing \((1-\delta)\kappa\) by \((1-2\delta)\kappa\) in the negative exponent.  Hence
\[
        \|V_jf\|_{\mathcal U^\sigma_{p,h_M}}
        \le
        C_{\delta,n,\kappa}A_0e^{C_{\delta,n,\kappa}\sigma+\delta S}
        \exp\left(
        (1+\delta)\sigma R_j
        -(1-2\delta)\kappa\left(\frac{R_j}{M}\right)^{1/\kappa}
        \right)
        \|f\|_{\mathcal U^\sigma_{p,h_M}}.
\]
Summing over \(j\ge1\) and using Lemma \ref{lem-lacunary-summation} with
\(a=1+\delta\) and \(b=1-2\delta\), we obtain
\begin{align*}
        &\sum_{j\ge1}
        \exp\left(
        (1+\delta)\sigma R_j
        -(1-2\delta)\kappa\left(\frac{R_j}{M}\right)^{1/\kappa}
        \right)\\
&        \le
        C_{\delta,\kappa}
        \exp\left(
        (1-\kappa)(1+\delta)^{1/(1-\kappa)}
        \bigl((1-\delta)(1-2\delta)\bigr)^{-\kappa/(1-\kappa)}S
        \right).
\end{align*}
Combining this with \eqref{eq-V-low-mult-Up-final} gives
\begin{align*}
        &\|Vf\|_{\mathcal U^\sigma_{p,h_M}}\\
        &\le        C_{\delta,n,\kappa}A_0e^{C_{\delta,n,\kappa}\sigma}
        \exp\left(
        \delta S+
        (1-\kappa)(1+\delta)^{1/(1-\kappa)}
        \bigl((1-\delta)(1-2\delta)\bigr)^{-\kappa/(1-\kappa)}S
        \right)
        \|f\|_{\mathcal U^\sigma_{p,h_M}}.
\end{align*}
Finally choose \(\delta=\delta(\varepsilon,\kappa)>0\) sufficiently small so
that
\[
        \delta+
        (1-\kappa)(1+\delta)^{1/(1-\kappa)}
        \bigl((1-\delta)(1-2\delta)\bigr)^{-\kappa/(1-\kappa)}
        \le (1+\varepsilon)(1-\kappa) .
\]
This proves \eqref{eq-V-mult-symbolic}.
\end{proof}

\begin{lemma}[Low and high frequencies in $\mathcal U^\sigma_{p,h}$]
\label{lem-Up-low-high}
Let $h\ge1$.  For $N\ge h$ and $1\le p\le\infty$,
\begin{align}\label{eq-Up-low}
 \|P_{\le N}f\|_{\mathcal U^\sigma_{p,h}}
 &\le C_n e^{C\sigma N}\|f\|_{L^p},\\
\label{eq-Up-high}
 \|P_{>N}f\|_{\mathcal U^\sigma_{p,h}}
 &\le C_{s,n}N^{-s}
 \|\Lambda^s f\|_{\mathcal U^\sigma_{p,h}}.
\end{align}
\end{lemma}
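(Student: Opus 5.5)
We prove the two estimates separately. Both rest on two facts. First, each piece \(\Box_{h,k}\) is a Fourier multiplier whose symbol is a translate and dilate of the fixed function \(\psi\); its kernel is therefore a modulated \(L^1\)-normalized dilate of \(\check\psi\), and \(\Box_{h,k}\) is bounded on \(L^p\) with norm \(\le\|\check\psi\|_{L^1}=C_n\), uniformly in \(h\) and \(k\). Second, the pieces have bounded overlap. We take \(P_{\le N}\) to be a smooth Fourier cutoff \(\chi(D/N)\), with \(\chi=1\) on \(\{|\xi|_\infty\le1\}\) and \(\supp\chi\subset\{|\xi|_\infty\le2\}\), and set \(P_{>N}=I-P_{\le N}\), as in the \(L^2\) argument.

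\emph{Low frequencies.} Because \(\Box_{h,k}\) and \(P_{\le N}\) commute, \(\Box_{h,k}P_{\le N}f\) vanishes unless the support of \(\psi(\cdot/h-k)\) meets \(\{|\xi|_\infty\le2N\}\). That support lies in \(\{h|k|_\infty-2h\le|\xi|_\infty\}\), so a nonzero piece requires \(h|k|_\infty\le2N+2h\le4N\), using \(N\ge h\). For these \(k\),
\[
e^{\sigma h|k|_\infty}\|\Box_{h,k}P_{\le N}f\|_p\le e^{4\sigma N}\,C_n\,\|P_{\le N}f\|_p\le C_ne^{4\sigma N}\|f\|_p ,
\]
where the last step uses the \(L^1\) bound on the kernel of \(P_{\le N}\), which is independent of \(N\). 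Taking the supremum over \(k\) gives \eqref{eq-Up-low} with \(C=4\).

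\emph{High frequencies.} Write \(P_{>N}=m_N(D)\Lambda^s\) with \(m_N(\xi)=(1-\chi(\xi/N))|\xi|^{-s}\). This symbol vanishes for \(|\xi|_\infty\le N\). A rescaling gives \(m_N(\xi)=N^{-s}m_1(\xi/N)\), where \(m_1(\zeta)=(1-\chi(\zeta))|\zeta|^{-s}\). The symbol \(m_1\) is smooth and satisfies symbol estimates \(|\partial^\beta m_1(\zeta)|\lesssim\langle\zeta\rangle^{-s-|\beta|}\), but it is not integrable at infinity when \(s\le n\), so \(\|\mathcal F^{-1}m_1\|_{L^1}<\infty\) needs justification. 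We decompose dyadically, \(m_1=\sum_{\ell\ge0}m_1\phi_\ell\), with \(\phi_\ell\) supported where \(|\zeta|\sim2^\ell\). Rescaling each piece shows its kernel has \(L^1\) norm \(\lesssim2^{-\ell s}\), and summing the geometric series gives \(\|\mathcal F^{-1}m_1\|_{L^1}\le C_{s,n}\). This is the main point requiring care, since \(m_1\) is a genuine pseudo-differential symbol rather than a compactly supported one. By dilation invariance of the \(L^1\) norm, the kernel of \(m_N(D)\) then has \(L^1\) norm at most \(C_{s,n}N^{-s}\).

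Since \(m_N(D)\) is a Fourier multiplier, it commutes with \(\Box_{h,k}\), so
\[
\|\Box_{h,k}P_{>N}f\|_p=\|m_N(D)\Box_{h,k}\Lambda^sf\|_p\le C_{s,n}N^{-s}\|\Box_{h,k}\Lambda^sf\|_p .
\]
Multiplying by \(e^{\sigma h|k|_\infty}\) and taking the supremum over \(k\) gives \eqref{eq-Up-high}. This step never needed the condition \(N\ge h\) or any loss in \(\sigma\), because the weight is attached to \(k\) and left unchanged.

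\emph{Justification.} As in Proposition \ref{prop-L2-apriori-G}, \(\Lambda^sf\) is interpreted distributionally. All operators above are Fourier multipliers applied to tempered distributions, so the identities hold in \(\mathcal S'\). If the right-hand side is infinite there is nothing to prove, so the \(L^p\) bounds may be applied to \(\Box_{h,k}\Lambda^sf\), which lies in \(L^p\) whenever the norm is finite.
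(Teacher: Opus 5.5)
Your proof is correct. The low-frequency half is the paper's argument: the support condition $h|k|_\infty\lesssim N$ combined with uniform $L^1$ kernels, which you make explicit with $C=4$.

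The high-frequency half takes a different route. You prove the global fact that $N^s(1-\chi(D/N))\Lambda^{-s}$ is convolution with an $L^1$ kernel of norm $C_{s,n}$, uniformly in $N$. You get this by splitting $m_1$ into dyadic shells whose kernels have $L^1$ norms $\lesssim 2^{-\ell s}$, and then you simply commute this operator with $\Box_{h,k}$.

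The paper instead works box by box. It inserts a cutoff equal to one on $\supp\psi(\cdot/h-k)$, so that after rescaling by $h$ it only sees a unit-size piece of the symbol $N^s(1-\chi(\xi/N))|\xi|^{-s}$. It then uses $N\ge h$ to bound finitely many derivatives of that rescaled piece uniformly in $N,h,k$, which gives uniform $L^1$ kernels.

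Your version gives a slightly stronger statement. It does not need $N\ge h$ in \eqref{eq-Up-high}, and it yields the unlocalized bound $\|P_{>N}g\|_{L^p}\le C_{s,n}N^{-s}\|\Lambda^s g\|_{L^p}$ as well. However, it relies on the geometric decay $2^{-\ell s}$ that comes from $s>0$.

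The paper's box-localized argument uses only bounded derivatives of the symbol and no decay across dyadic scales. That is why it carries over verbatim to Lemma \ref{lem-phi-high} for general admissible $\varphi$, including the endpoints $p=1,\infty$. For example, with $\varphi(r)=\log(e+r)$ the shell sizes $\varphi(N)/\varphi(2^\ell)$ are not summable in $\ell$, so your geometric-series summation is not available there.
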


\begin{proof}
For the low-frequency estimate, $\Box_{h,k}P_{\le N}f=0$ unless
$h|k|_\infty\le C_nN$.  Since $\Box_{h,k}$ and $P_{\le N}$ have kernels uniformly
bounded in $L^1$,
\[
 \|\Box_{h,k}P_{\le N}f\|_p\le C_n\|f\|_p.
\]
Thus
\[
 e^{\sigma h|k|_\infty}\|\Box_{h,k}P_{\le N}f\|_p
 \le C_ne^{C\sigma N}\|f\|_p.
\]
Taking the supremum in $k$ proves \eqref{eq-Up-low}.

For the high-frequency estimate, since \(P_{>N}\), \(\Lambda^{-s}\), and \(\Box_{h,k}\) are Fourier multipliers,
they commute.  Hence
\[
 \Box_{h,k}P_{>N}f
 =
 T_{N,k}\Box_{h,k}\Lambda^s f,
\]
where \(T_{N,k}\) is the Fourier multiplier with symbol \((1-\chi(\xi/N))|\xi|^{-s}\)
localized to the support of \(\psi(\xi/h-k)\).  On this support, with \(N\ge h\),
the normalized symbol
\[
 N^s(1-\chi(\xi/N))|\xi|^{-s}
\]
has inverse Fourier kernel bounded uniformly in \(L^1\), after inserting a
cutoff equal to one on \(\supp\psi(\xi/h-k)\).  Therefore,
\[
 \|\Box_{h,k}P_{>N}f\|_p
 \le
 C N^{-s}\|\Box_{h,k}\Lambda^s f\|_p.
\]
Multiplying by \(e^{\sigma h|k|_\infty}\) and taking the supremum gives
\eqref{eq-Up-high}.
\end{proof}

\begin{lemma}[A priori estimate in \(\mathcal U^\sigma_{p,h_M}\)]
\label{lem-Up-apriori}
Let \(u\) be as in Theorem \ref{thm-fractional-Lp}, with \(p\ne2\).
Fix \(\varepsilon\in(0,1)\), and let \(h_M\) be the scale supplied by
Lemma \ref{lem-V-mult-weighted}.  Define
\[
        N_{\sigma,\varepsilon}
        :=h_M+
        A_{\varepsilon,p,s,n,\kappa}
        (1+A_0^{1/s})
        \exp\left(
        \frac{(1+\varepsilon)(1-\kappa)}{s}(\sigma M)^{1/(1-\kappa)}
        +
        C_{\varepsilon,p,s,n,\kappa}\sigma
        \right).
\]
Then, for every \(\sigma>0\),
\begin{align}\label{eq-Up-apriori}
 \|u\|_{\mathcal U^\sigma_{p,h_M}}
 \le
 C_{\varepsilon,p,s,n,\kappa}
 \exp\bigl(
 C_{\varepsilon,p,s,n,\kappa}\sigma N_{\sigma,\varepsilon}
 \bigr).
\end{align}
\end{lemma}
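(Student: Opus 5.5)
The estimate \eqref{eq-Up-apriori} will come from the same low--high absorption argument as in the proof of Proposition \ref{prop-L2-apriori-G}, with the $\mathcal G^\sigma_\infty$ norm replaced by $\mathcal U^\sigma_{p,h_M}$ and Plancherel replaced by Lemmas \ref{lem-V-mult-weighted} and \ref{lem-Up-low-high}.

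\emph{Step 1: make the norm finite.} The a priori quantity $\|u\|_{\mathcal U^\sigma_{p,h_M}}$ need not be finite, so I work with a truncated weight. Replace $e^{\sigma h|k|_\infty}$ in \eqref{def-Up-sigma} by $e^{\sigma\min(h|k|_\infty,L)}$ for $L\ge1$. Since $\|\Box_{h,k}u\|_p\le C_n\|u\|_p=C_n$, this truncated norm is bounded by $C_ne^{\sigma L}<\infty$.

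Next I check that Lemmas \ref{lem-localized-mult}, \ref{lem-V-mult-weighted} and \ref{lem-Up-low-high} survive the truncation with constants independent of $L$. The only new input is the elementary inequality $\min(h|\ell|_\infty,L)\le \min(h|k|_\infty,L)+R+C_nh$, valid whenever $h|\ell|_\infty\le h|k|_\infty+R+C_nh$. The low--high lemma is unaffected, since it is diagonal in $k$ and the weight only decreases.

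\emph{Step 2: absorption.} Write $U_\sigma$ for the truncated norm of $u$ and $Y_\sigma$ for that of $\Lambda^su=Vu$. Lemma \ref{lem-V-mult-weighted} gives
\[
Y_\sigma\le C_{\varepsilon}A_0\exp\bigl((1+\varepsilon)(1-\kappa)(\sigma M)^{1/(1-\kappa)}+C_\varepsilon\sigma\bigr)U_\sigma .
\]
Take any $N\ge h_M$, so that Lemma \ref{lem-Up-low-high} applies. Splitting $u=P_{\le N}u+P_{>N}u$ and using the triangle inequality for the sup-norm gives
\[
U_\sigma\le C_ne^{C\sigma N}+C_{s,n}N^{-s}Y_\sigma .
\]
Now choose $N=N_{\sigma,\varepsilon}$. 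The second summand already makes $N^s$ at least $A^s(1+A_0^{1/s})^s\exp\bigl((1+\varepsilon)(1-\kappa)(\sigma M)^{1/(1-\kappa)}+sC\sigma\bigr)$. Since $(1+A_0^{1/s})^s\ge 1+A_0\ge A_0$, the coefficient $C_{s,n}N^{-s}C_\varepsilon A_0\exp(\cdots)$ is at most $1/2$ once $A_{\varepsilon,p,s,n,\kappa}$ is large and $C_{\varepsilon,p,s,n,\kappa}\ge C_\varepsilon/s$. The added $h_M$ in $N_{\sigma,\varepsilon}$ guarantees $N\ge h_M$ and only increases $N$, so it does not hurt the absorption.

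Because $U_\sigma<\infty$ by Step 1, we may absorb and obtain $U_\sigma\le 2C_ne^{C\sigma N_{\sigma,\varepsilon}}$, uniformly in $L$. Letting $L\to\infty$ (monotone in the weight, and the supremum over $k$ of increasing limits) yields \eqref{eq-Up-apriori}.

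\emph{Main obstacle.} The delicate point is Step 1: verifying that the truncated-weight versions of Lemmas \ref{lem-localized-mult} and \ref{lem-V-mult-weighted} hold with $L$-independent constants. Everything else is bookkeeping of constants, namely that $h_M$ depends only on $\varepsilon,n,\kappa,M$ and that the $C_n\sigma h_M$ loss was already built into Lemma \ref{lem-V-mult-weighted}.
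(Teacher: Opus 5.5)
Your proposal is correct and follows essentially the same route as the paper: truncate the analytic weight so the norm is finite, check that the multiplication and low--high lemmas hold with $L$-independent constants, absorb with $N=N_{\sigma,\varepsilon}$ (the added $h_M$ ensuring $N\ge h_M$), and let $L\to\infty$ by monotone convergence. One harmless slip: the intermediate claim $(1+A_0^{1/s})^s\ge 1+A_0$ fails for $0<s<1$, but all you actually need is $(1+A_0^{1/s})^s\ge A_0$, which holds for every $s>0$.
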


\begin{proof}
We first justify the a priori use of the analytic frequency-uniform norm.  For
\(L\ge1\), define
\[
    \|f\|_{\mathcal U^{\sigma,L}_{p,h}}
    :=
    \sup_{k\in\mathbb Z^n}
    e^{\sigma h(|k|_\infty\wedge L)}
    \|\Box_{h,k}f\|_{L^p}.
\]
This norm is finite for every \(f\in L^p\), since the weight is bounded and the
operators \(\Box_{h,k}\) are uniformly bounded on \(L^p\).  The proofs of
Lemmas~\ref{lem-localized-mult}, \ref{lem-V-mult-weighted}, and
\ref{lem-Up-low-high} remain valid with \(\mathcal U^\sigma_{p,h}\) replaced with
\(\mathcal U^{\sigma,L}_{p,h}\), with constants independent of \(L\).  The only
additional point needed in the block summations is
\[
    |\ell|_\infty\wedge L
    \le
    (|k|_\infty\wedge L)+|\ell-k|_\infty,
    \qquad k,\ell\in\mathbb Z^n .
\]

Set
\[
    U_{\sigma,L}:=\|u\|_{\mathcal U^{\sigma,L}_{p,h_M}},
    \qquad
    Y_{\sigma,L}:=\|\Lambda^s u\|_{\mathcal U^{\sigma,L}_{p,h_M}}.
\]
By Lemma~\ref{lem-V-mult-weighted}, applied in the truncated norm,
\begin{align}\label{eq-Up-YU-mult-truncated}
    Y_{\sigma,L}
    \le
    C_\varepsilon A_0
    \exp\left((1+\varepsilon)(1-\kappa)(\sigma M)^{1/(1-\kappa)}
    +C_\varepsilon\sigma\right)
    U_{\sigma,L}.
\end{align}
Using Lemma~\ref{lem-Up-low-high} in the truncated norm and \(\|u\|_p=1\), we get
\begin{align}\label{eq-Up-low-high-apriori-truncated}
    U_{\sigma,L}
    \le
    C e^{C\sigma N}+CN^{-s}Y_{\sigma,L}.
\end{align}
Combining \eqref{eq-Up-YU-mult-truncated} and
\eqref{eq-Up-low-high-apriori-truncated}, we obtain
\[
    U_{\sigma,L}
    \le
    C e^{C\sigma N}
    +
    C_\varepsilon A_0N^{-s}
    \exp\left((1+\varepsilon)(1-\kappa)(\sigma M)^{1/(1-\kappa)}
    +C_\varepsilon\sigma\right)
    U_{\sigma,L}.
\]
Choose \(N=N_{\sigma,\varepsilon}\), with
\(A_{\varepsilon,p,s,n,\kappa}\) sufficiently large.  Then the coefficient of
\(U_{\sigma,L}\) on the right-hand side is at most \(1/2\).  Therefore,
\[
    U_{\sigma,L}
    \le
    C_{\varepsilon,p,s,n,\kappa}
    \exp\bigl(C_{\varepsilon,p,s,n,\kappa}\sigma N_{\sigma,\varepsilon}\bigr),
\]
with constants independent of \(L\).

Finally, letting \(L\to\infty\), we have \( \|u\|_{\mathcal U^{\sigma,L}_{p,h_M}}
    \uparrow
    \|u\|_{\mathcal U^\sigma_{p,h_M}}.\) 
Hence
\[
    \|u\|_{\mathcal U^\sigma_{p,h_M}}
    \le
    C_{\varepsilon,p,s,n,\kappa}
    \exp\bigl(
    C_{\varepsilon,p,s,n,\kappa}\sigma N_{\sigma,\varepsilon}
    \bigr),
\]
which proves \eqref{eq-Up-apriori}.
\end{proof}

We now finish the proof for \(p\ne2\).  Let \(h=h_M=1+\delta M\geq 1\) be the scale chosen in
Lemma \ref{lem-V-mult-weighted}, and let \(m=|\alpha|\ge1\).  We first extract
derivatives directly from the frequency-uniform norm.  Since
\(\Box_{h,k}u\) is Fourier supported in a cube of size comparable to
\(h(1+|k|_\infty)\), Bernstein's inequality gives
\[
        \|D^\alpha \Box_{h,k}u\|_p
        \le
        \bigl(h(|k|_\infty+C_n)\bigr)^m
        \|\Box_{h,k}u\|_p .
\]
Using the definition of \(\mathcal U^\sigma_{p,h}\), we obtain
\begin{equation}
    \label{eq7.31}
        \|D^\alpha u\|_p
        \le
        \sum_{k\in\Z^n}
        \bigl(h(|k|_\infty+C_n)\bigr)^m
        e^{-\sigma h|k|_\infty}
        \|u\|_{\mathcal U^\sigma_{p,h}}.
\end{equation}
We estimate the lattice sum by comparing it with an integral.   If
\(x\in k+[0,1)^n\), then
\[
        \big||x|_\infty-|k|_\infty\big|\le C_n .
\]
After enlarging \(C_n\) if necessary, this implies
\[
        \bigl(h(|k|_\infty+C_n)\bigr)^m e^{-\sigma h|k|_\infty}
        \le
        e^{C_n\sigma h}
        \bigl(h(|x|_\infty+C_n)\bigr)^m
        e^{-\sigma h|x|_\infty}.
\] 
Therefore,
\begin{equation}
    \label{eq7.32}
\begin{aligned}
        \sum_{k\in\mathbb Z^n}
        \bigl(h(|k|_\infty+C_n)\bigr)^m
        e^{-\sigma h|k|_\infty}
        &\le
        C_n e^{C_n\sigma h}
        \int_{\mathbb R^n}
        \bigl(h(|x|_\infty+C_n)\bigr)^m
        e^{-\sigma h|x|_\infty}\,\d x  \\
        &\le
        C_n e^{C_n\sigma h}
        \int_0^\infty
        \bigl(h(r+C_n)\bigr)^m
        e^{-\sigma h r}
        r^{n-1}\,\d r .
\end{aligned}
\end{equation}
Changing variables \(r+C_n\mapsto r\) and then \(hr\mapsto r\), and using
\(r^{n-1}\le (r+C_n)^{n-1}\), we obtain
\begin{equation}
    \label{eq7.33}
\begin{aligned}
        \int_0^\infty
        \bigl(h(r+C_n)\bigr)^m
        e^{-\sigma h r}
        r^{n-1}\,\d r
        &\le
        C_n e^{C_n\sigma h}h^{-n}
        \int_0^\infty r^{m+n-1}e^{-\sigma r}\,\d r  \\
        &=
        C_n e^{C_n\sigma h}h^{-n}
        \frac{(m+n-1)!}{\sigma^{m+n}}  \\
        &\le
        C_n e^{C_n\sigma h}
        (m+1)^{n-1}
        \frac{m!}{\sigma^m}
        \frac{1}{(\sigma h)^n}. 
\end{aligned}
\end{equation}
Thus, using \eqref{eq7.31}, \eqref{eq7.32}, \eqref{eq7.33}, and Lemma \ref{lem-Up-apriori},
\[
    \|D^\alpha u\|_p
        \le
        C_{\varepsilon,p,s,n,\kappa}
        e^{C_n\sigma h}
        (m+1)^{n-1}
        \frac{m!}{\sigma^m}
        \frac{1}{(\sigma h)^n}
        \exp\bigl(
        C_{\varepsilon,p,s,n,\kappa}\sigma N_{\sigma,\varepsilon}
        \bigr).
\]

We use the same slightly enlarged optimizing scale as in the \(L^2\) argument.  Fix \(\theta>1\).  Choose \(\eta>0\) and
\(\varepsilon=\varepsilon(\theta,\eta)>0\) sufficiently small, and then choose
\(K_\theta\) sufficiently large, so that after setting
\[
        B_\theta=
        \theta M\left(\frac{1-\kappa}{s}\right)^{1-\kappa}
        +K_\theta(1+A_0^{1/s}),
\]
the quantity \(N_{\sigma_m,\varepsilon}\) defined in Lemma
\ref{lem-Up-apriori}, with
\[
        \sigma_m:=
        \frac{1+\eta}{B_\theta}
        \big(\log(m+e)\big)^{1-\kappa} ,
\]
satisfies, for some \(q=q_\theta\in(0,1)\),
\[
        N_{\sigma_m,\varepsilon}
        \le h+
        C_\theta(1+A_0^{1/s})(m+e)^q .
\]
Indeed, using \(B_\theta\ge \theta M\left(\frac{1-\kappa}{s}\right)^{1-\kappa}\), we have
\[
        \frac{(1+\varepsilon)(1-\kappa)}{s}
        (\sigma_m M)^{1/(1-\kappa)}
        \le
        (1+\varepsilon)(1+\eta)^{1/(1-\kappa)}
        \theta^{-1/(1-\kappa)}
        \log(m+e).
\]
Thus \(\eta\) and \(\varepsilon\) are chosen so that
\[
        (1+\varepsilon)(1+\eta)^{1/(1-\kappa)}\theta^{-1/(1-\kappa)}<1.
\]
The remaining term \(C_{\varepsilon}\sigma_m\) in the definition of
\(N_{\sigma_m,\varepsilon}\) is lower order in \(\log(m+e)\), or is absorbed by
taking \(K_\theta\) sufficiently large in the endpoint case.  Hence the above
bound holds for some \(q=q_\theta<1\).

Since \(B_\theta\ge K_\theta(1+A_0^{1/s})\), it follows that
\[
\begin{aligned}
        \sigma_mN_{\sigma_m,\varepsilon}
        &\le \sigma_m h+
        C_\theta
        \frac{1+A_0^{1/s}}{B_\theta}
        \big(\log(m+e)\big)^{1-\kappa}
        (m+e)^q  \\
        &\le C_\theta(\log(m+e))^{1-\kappa}+
        C_\theta
        \big(\log(m+e)\big)^{1-\kappa}
        (m+e)^q
        =
        o(m).
\end{aligned}
\]
Moreover,   the factor \(e^{C\sigma_m h}\) and the polynomial factor
\((m+1)^{n-1}\)  are $e^{o(m)}$.  Hence, after increasing the constant to absorb finitely many small values of
\(m\),
\[
        e^{C_{n}\sigma_m h}
        (m+1)^{n-1}        \exp(C_{\varepsilon,p,s,n,\kappa}\sigma_mN_{\sigma_m,\varepsilon})
        \le
        C_\theta(1+\eta)^m .
\]
Thus, we obtain
\[
\begin{aligned}
        \|D^\alpha u\|_p
         \le
        C_\theta \left(
\frac{B_\theta}
{h\bigl(\log(m+e)\bigr)^{1-\kappa}}
\right)^n
        B_\theta^m
        \frac{m!}{\big(\log(m+e)\big)^{(1-\kappa)m}} .
\end{aligned}
\]

It remains to absorb the factor $(
\frac{B_\theta}
{h\bigl(\log(m+e)\bigr)^{1-\kappa}}
)^n$ into the type constant.  Put
\(Y:=1+A_0^{1/s}\).  Define
\[
B_\theta':= \theta M\left(\frac{1-\kappa}{s}\right)^{1-\kappa} + K_\theta' Y^{n+1},
\]
where \(K_\theta'\) will be chosen sufficiently large. Since \(h=1+\delta M\), we have
\(\frac{M}{h}\le \delta^{-1}\) and \(h^{-1}\le 1\).  Hence
\[
\begin{aligned}
B_\theta \left(\frac{B_\theta}{h}\right)^n
&\le C_\theta (M+Y) \left(\frac{M+Y}{h}\right)^n \\
&\le C_\theta (M+Y) \sum_{j=0}^n \left(\frac{M}{h}\right)^{n-j} \left(\frac{Y}{h}\right)^j \\
&\le C_\theta \bigl(M+Y^{n+1}\bigr) \le C_\theta B_\theta' .
\end{aligned}
\]
Combining this with the previous estimate yields
\[
\|D^\alpha u\|_p \le C_\theta (B_\theta')^m \frac{m!}{\bigl(\log(m+e)\bigr)^{(1-\kappa)m}} .
\]
The case \(m=0\) follows from \(\|u\|_p=1\), after enlarging \(C_\theta\).
This proves Theorem \ref{thm-fractional-Lp} for all
\(1\le p\le\infty\) with \(p\ne2\).  Together with the Fourier-side \(L^2\)
result proved in Subsection \ref{subsec-L2-proof}, the proof of
Theorem \ref{thm-fractional-Lp} is complete.

\begin{remark}\label{remark-DW25}
The strategy developed in this paper also applies to fractional equations with gradient terms when $s>1$. 
More precisely, for the equation $\Lambda^s u = W\cdot\nabla u + V u$, 
under ultra-analytic assumptions on $W$ and $V$ analogous to \eqref{assump-A-derivative}, 
one can obtain logarithmic ultra-analytic estimates of the same form as in Theorem~\ref{thm-fractional-Lp}, 
provided that the smoothing exponent $s$ is replaced with $s-1$. 
This shift reflects the fact that the gradient term consumes one order of smoothing, 
leaving $s-1$ as the effective gain in the high-frequency absorption argument. In particular, when $s=2$ and $\kappa=0$, this recovers the sharp estimate established in \cite{DongWang24}.
\end{remark}

\section{Proof of Theorem \ref{thm-general-phi}}\label{sec-general-phi}

We now replace \(\Lambda^s\) with a general radial Fourier multiplier \(\varphi(D)\).  The frequency decay of the potential and the scale-adapted multiplication estimate remain unchanged.  The only new point is the high-frequency estimate: instead of gaining a factor \(N^{-s}\), one gains the inverse of \(\varphi(N)\).

\begin{definition}[Admissible radial multipliers]\label{def:admissible-multiplier}\label{def-admissible-varphi} Let \(\varphi:[0,\infty)\to[0,\infty)\) be nondecreasing, satisfy \[ \varphi(r)>0\quad\text{for }r>0, \qquad \lim_{r\to\infty}\varphi(r)=\infty . \] We denote by \[ \Phi_\varphi(t):=\inf\{R\ge0:\varphi(R)\ge t\},\qquad t>0, \] its generalized inverse. We say that \(\varphi\) is an admissible radial multiplier if the following two conditions hold. \smallskip 

\noindent \((i)\) For every \(a\in(0,1)\) and every \(A\ge1\), there exist \(q\in(0,1)\) and \(C=C(a,A,\varphi)>0\) such that 
\begin{align}\label{assump-admissible-varphi-1}
\Phi_\varphi\bigl(A(e+\varphi(r))^a\bigr) \le C(e+r)^q,\qquad r\ge0 .     
\end{align}
   
   \smallskip 
   \noindent \((ii)\) There exists an integer \(N_*>n+1\) such that \(\varphi\in C^{N_*}((0,\infty))\) and
\begin{align}\label{assump-admissible-varphi-2}
\sup_{r>0} \left| \frac{(r\partial_r)^j\varphi(r)}{\varphi(r)} \right| <\infty, \qquad 1\le j\le N_* .    
\end{align}   
The constants in \eqref{assump-admissible-varphi-1} and \eqref{assump-admissible-varphi-2} will be called the admissibility constants of \(\varphi\). \end{definition}

\begin{remark}\label{rem:admissible-symbol-condition}
The assumption $(i)$ implies that, for every
\(\rho>0\), \( L_\varphi(m)=o(m^\rho)\).
 To see this, fix \(a\in(0,1)\) in \eqref{assump-admissible-varphi-1}.  The inverse
estimate gives, after taking logarithms,
\[
        L_\varphi(r)
        \le a^{-1}L_\varphi\big(C(e+r)^q\big)+C_a
\]
for some \(q\in(0,1)\).  Iterating this inequality reduces \(r\) to a bounded
range after \(O(\log\log r)\) steps, and yields
\[
        L_\varphi(r)\le C(\log(e+r))^\mu
\]
for some \(\mu>0\).  This is \(o(r^\rho)\) for every \(\rho>0\).
\end{remark}

Let \(\varphi\) be admissible in the sense of Definition \ref{def-admissible-varphi}.  Throughout
this section, we fix a cutoff \(\chi\in C_c^\infty(\mathbb R^n)\) satisfying
\[
        0\le \chi\le1,\qquad
        \chi(\xi)=1\ \text{for } |\xi|\le1,
        \qquad
        \chi(\xi)=0\ \text{for } |\xi|\ge2,
\]
and set
\[
        P_{\le N}:=\chi(D/N),\qquad P_{>N}:=I-P_{\le N}.
\]
We use the frequency-uniform cutoff \(\psi\) and the operators \(\Box_{h,k}\)
defined in \eqref{eq-Lp-psi-defi} and \eqref{eq-Lp-Box-defi}.

\begin{lemma}[High-frequency estimate for \(\varphi(D)\)]\label{lem-phi-high}
Let \(1\le p\le\infty\), \(h\ge1\), and \(N\ge h\).  Then
\begin{align}\label{eq-phi-high}
        \|P_{>N}f\|_{\mathcal U^\sigma_{p,h}}
        \le C_\varphi\varphi(N)^{-1}\|\varphi(D)f\|_{\mathcal U^\sigma_{p,h}}.
\end{align}
Here \(C_\varphi>0\) is independent of \(N,h,\sigma\) and \(f\).
\end{lemma}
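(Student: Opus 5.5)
The plan is to mimic the proof of \eqref{eq-Up-high} in Lemma \ref{lem-Up-low-high}, replacing \(|\xi|^{-s}\) by \(\varphi(|\xi|)^{-1}\). Since \(P_{>N}\), \(\Box_{h,k}\) and \(\varphi(D)\) are Fourier multipliers, they commute. We write
\[
\Box_{h,k}P_{>N}f=\varphi(N)^{-1}\,T_N\,\Box_{h,k}\varphi(D)f,
\qquad
T_N:=m_N(D),\quad m_N(\xi):=\bigl(1-\chi(\xi/N)\bigr)\frac{\varphi(N)}{\varphi(|\xi|)} .
\]
This is legitimate because \(\varphi(|\xi|)\ge\varphi(N)>0\) on \(\operatorname{supp}(1-\chi(\cdot/N))\subset\{|\xi|\ge N\}\), using monotonicity. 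It then suffices to show that \(T_N\) is bounded on \(L^p\), \(1\le p\le\infty\), uniformly in \(N\). The natural route is to show that the kernel \(\mathcal F^{-1}m_N\) has \(L^1\) norm bounded independently of \(N\). After multiplying by \(e^{\sigma h|k|_\infty}\) and taking the supremum in \(k\), this gives \eqref{eq-phi-high}. Note that \(h\) and \(\sigma\) play no role, and we do not even need the \(\Box_{h,k}\)-localization or \(N\ge h\). The only thing required is a global multiplier bound.

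For the uniform \(L^1\) bound, rescale \(\xi=N\zeta\), which preserves kernel \(L^1\) norms. This gives
\[
\tilde m_N(\zeta)=(1-\chi(\zeta))\,\frac{\varphi(N)}{\varphi(N|\zeta|)} .
\]
It is smooth, since it vanishes for \(|\zeta|\le1\), and it is bounded by \(1\). Condition (ii) of Definition \ref{def-admissible-varphi} gives \(|(r\partial_r)^j\varphi|\le C\varphi\) for \(j\le N_*\). Define \(g(r):=\varphi(N)/\varphi(Nr)\). Since \(r\partial_r\) commutes with dilation, the Faà di Bruno/quotient rule yields \(|(r\partial_r)^j g(r)|\le C_j\,g(r)\le C_j\) for \(r\ge1\), uniformly in \(N\). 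Converting to Cartesian derivatives of the radial function \(g(|\zeta|)\) on \(|\zeta|\ge1\), we obtain the Mikhlin-type bounds
\[
|\partial_\zeta^\beta\tilde m_N(\zeta)|\le C_\beta(1+|\zeta|)^{-|\beta|},\qquad |\beta|\le N_*,
\]
uniformly in \(N\). The factor \(1-\chi\) only contributes derivatives supported in \(1\le|\zeta|\le2\), where everything is bounded.

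The main obstacle is turning these symbol estimates into an \(L^1\) kernel bound rather than a mere \(L^p\), \(1<p<\infty\), Mikhlin bound. The endpoints \(p=1,\infty\) must be covered, and \(\tilde m_N\) does not decay at infinity, since \(\tilde m_N\to0\) need not hold. I would split \(\tilde m_N=c_N+(\tilde m_N-c_N)\) only if necessary. The more direct approach is to decompose dyadically, \(\tilde m_N=\sum_{\ell\ge0}\tilde m_N\phi_\ell\), with \(\phi_\ell\) a Littlewood--Paley piece at scale \(2^\ell\). Each piece is smooth with derivatives \(\lesssim 2^{-\ell|\beta|}\) on a shell of size \(2^\ell\), so its kernel has \(L^1\) norm \(O(1)\) but does not sum. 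So a pure Mikhlin bound alone does not give \(L^1\).

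The resolution I would try is to use the smallness of the derivatives. Write \(\tilde m_N(\zeta)=g(1)+\int_1^{|\zeta|}g'(t)\,dt\) for \(|\zeta|\ge1\). The variation of \(g\) across the dyadic shell \(2^\ell\) is controlled by \(\int |(r\partial_r) g|\,dr/r\). This is bounded per shell but not summable in general, so in general one only gets \(\ell^\infty\) control. If this fails, I would fall back on the observation that \(1/\varphi\) should be monotone (\(\varphi\) nondecreasing), so \(g\) is nonincreasing. Then the total variation of \(g\) on \([1,\infty)\) is at most \(g(1)=1\), and \(\sum_\ell\sup_{\text{shell }\ell}|(r\partial_r)g|\) can be summed using the higher-order bounds of (ii) via interpolation. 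Combined with \(N_*>n+1\), which gives enough derivatives to bound each dyadic kernel in \(L^1\) by \(\sup|\text{shell variation}|+2^{-\ell}\)-type terms, this yields a summable series and hence \(\sup_N\|\mathcal F^{-1}\tilde m_N\|_{L^1}<\infty\), completing the proof.
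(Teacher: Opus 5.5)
\textbf{The gap: the reduction to a global multiplier bound.} You replace the lemma by the claim $\sup_N\|\mathcal F^{-1}\tilde m_N\|_{L^1}<\infty$, i.e.\ uniform boundedness of $T_N$ on $L^1$ and $L^\infty$. Your Mikhlin bounds are correct and give the lemma for $1<p<\infty$. But the endpoints $p=1,\infty$ are required by the lemma and by Theorem~\ref{thm-general-phi}, and there your last paragraph is not a proof. Monotonicity only gives $\int_1^\infty|g'(r)|\,dr\le 1$, and (ii) only gives $|(r\partial_r)^jg|\le C_jg$. Since $\sum_\ell g(2^\ell)=\infty$ whenever $\varphi$ grows slowly, no interpolation between these yields $\sum_\ell\sup_{\text{shell }\ell}|(r\partial_r)^jg|<\infty$.

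In fact the global claim appears to fail for general admissible $\varphi$; here is a sketch. In $n=1$, let $\log\varphi$ increase by $\delta_j$ in smooth steps of logarithmic width $w_j=\delta_j^{1/N_*}$, placed at very sparse radii $R_j$.
\begin{itemize}
\item Condition (ii) holds, since derivatives of $\log\varphi$ of order $m$ are of size $\delta_j/w_j^m\lesssim 1$ for $m\le N_*$.
\item Condition (i) holds because $\varphi$ grows extremely slowly.
\item The profile $g$ is then a superposition of smoothed indicators of $[-R_j,R_j]$, with weights $\approx\delta_j g(R_j)$ and relative transition widths $w_j$. Each such indicator has kernel $L^1$ norm $\approx\log(1/w_j)$, and with sparse radii these contributions do not cancel.
\end{itemize}
Choosing the $\delta_j$ in blocks, one can arrange both $\sum_j\delta_j=\infty$ (so $\varphi\to\infty$) and $\sum_j\delta_j g(R_j)\log(1/\delta_j)=\infty$. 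Then $\mathcal F^{-1}\tilde m_N$ is not a finite measure for any $N$. So this route cannot be repaired by a better summation argument.

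\textbf{The missing idea: keep the box localization and the hypothesis $N\ge h$.} These are exactly what you declared unnecessary. Since $\mathcal U^\sigma_{p,h}$ only sees $\|\Box_{h,k}\cdot\|_{L^p}$, it suffices to show $\|\Box_{h,k}T_Ng\|_{L^p}\le C\|\Box_{h,k}g\|_{L^p}$ uniformly in $k$.
\begin{itemize}
\item Pick $\widetilde\psi\in C_c^\infty$ equal to $1$ near $\supp\psi$. Then $\Box_{h,k}T_N=S_{N,h,k}\Box_{h,k}$, where $S_{N,h,k}$ has symbol $\widetilde\psi(\xi/h-k)m_N(\xi)$.
\item After the dilation $\xi=h\eta$, which preserves kernel $L^1$ norms, this symbol becomes $\widetilde\psi(\eta-k)m_N(h\eta)$, supported in a cube of fixed size.
\item On $\supp m_N(h\,\cdot)$ one has $|h\eta|\ge N\ge h$, hence $|\eta|\ge1$. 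Your symbol bounds then give $|\partial_\eta^\beta[m_N(h\eta)]|\le C_\beta h^{|\beta|}|h\eta|^{-|\beta|}\le C_\beta$ for $|\beta|\le N_*$.
\item Since $N_*>n$, the kernel is $O((1+|x|)^{-N_*})$ uniformly in $N,h,k$, hence uniformly in $L^1$. This gives the estimate for every $1\le p\le\infty$.
\end{itemize}
This is the paper's argument. Without $N\ge h$, the box containing the transition region $|\xi|\sim N\ll h$ would carry derivative bounds of size $(h/N)^{|\beta|}$; that is why the hypothesis is there.
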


\begin{proof}
Write $P_{>N}=T_N\varphi(D)$, where
\[
T_N=\mathcal F^{-1}\big[(1-\chi(\cdot/N))\varphi(|\cdot|)^{-1}\big]
=\varphi(N)^{-1}\widetilde T_N,
\]
and $\widetilde T_N$ has symbol $m_N(\xi)=\varphi(N)(1-\chi(\xi/N))\varphi(|\xi|)^{-1}$.
It suffices to show $\|\widetilde T_N g\|_{\mathcal U^\sigma_{p,h}}\le C_\varphi\|g\|_{\mathcal U^\sigma_{p,h}}$ uniformly for $N\ge h$.

The admissibility condition on $\varphi$ yields symbol estimates
\[
|\partial_\xi^\gamma m_N(\xi)|\le C_\gamma|\xi|^{-|\gamma|},\quad |\gamma|\le N_*,
\]
on the support of $1-\chi(\xi/N)$. Pick $\widetilde\psi=1$ near $\operatorname{supp}\psi$. For each $k$, define $a_{N,h,k}(\eta)=\widetilde\psi(\eta-k)m_N(h\eta)$. Since $|h\eta|\ge N$ on the support of relevant derivatives, we have
\[
\sup_{\eta}|\partial_\eta^\alpha a_{N,h,k}(\eta)|\le C_\alpha,\quad |\alpha|\le N_*>n+1,
\]
uniformly in $N,h,k$. Hence the inverse Fourier transform of $a_{N,h,k}$ is uniformly bounded in $L^1$, implying
\[
\|S_{N,h,k}u\|_{L^p}\le C_\varphi\|u\|_{L^p},\qquad
S_{N,h,k}:=\mathcal F^{-1}[\widetilde\psi(\cdot/h-k)m_N],
\]
for $1\le p\le\infty$. Since $\Box_{h,k}\widetilde T_N g = S_{N,h,k}\Box_{h,k}g$, we get
\[
\|\Box_{h,k}\widetilde T_N g\|_{L^p}\le C_\varphi\|\Box_{h,k}g\|_{L^p}.
\]
Multiplying by $e^{\sigma h|k|_\infty}$ and taking supremum over $k$ gives $\|\widetilde T_N g\|_{\mathcal U^\sigma_{p,h}}\le C_\varphi\|g\|_{\mathcal U^\sigma_{p,h}}$. Taking $g=\varphi(D)f$ completes the proof.
\end{proof}

\begin{proposition}[A priori estimate for admissible multipliers]\label{prop-general-phi-apriori}
Let \(1\le p\le\infty\), and let \(u\in L^p(\R^n)\) solve \(\varphi(D)u=Vu\) with \(\|u\|_p=1\).  Fix the small parameter $\varepsilon\in(0,1)$ in Lemma \ref{lem-V-mult-weighted}, and let \(h_M=1+\delta M\) be the corresponding scale.  Then, for every \(\sigma>0\),
\begin{align}\label{eq-phi-apriori}
        \|u\|_{\mathcal U^\sigma_{p,h_M}}\le C e^{C\sigma N_\sigma},
\end{align}
where
\begin{align}\label{eq-phi-Nsigma}
        N_\sigma:=h_M+  \Phi_\varphi\big(2C(1+\mathcal V_\sigma)\big),\quad
               \mathcal V_\sigma:=C_\varepsilon A_0\exp\left((1+\varepsilon)(1-\kappa)(\sigma M)^{1/(1-\kappa)}+C_\varepsilon\sigma\right).
\end{align}
\end{proposition}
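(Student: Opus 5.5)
The plan is to repeat the absorption argument of Lemma~\ref{lem-Up-apriori}, with Lemma~\ref{lem-Up-high} replaced by Lemma~\ref{lem-phi-high} and the choice of \(N\) made through the generalized inverse \(\Phi_\varphi\).

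\emph{Truncated norm.} We first work in the truncated norm \(\mathcal U^{\sigma,L}_{p,h_M}\), \(L\ge1\). For \(u\in L^p\) this norm is finite, because the weight is bounded and the \(\Box_{h,k}\) are uniformly bounded on \(L^p\). Lemmas~\ref{lem-localized-mult}, \ref{lem-V-mult-weighted}, \ref{lem-Up-low-high} (low-frequency part) and \ref{lem-phi-high} remain valid in this norm with constants independent of \(L\). For the first two this uses \(|\ell|_\infty\wedge L\le(|k|_\infty\wedge L)+|\ell-k|_\infty\). For the other two it is immediate, since they act blockwise or through the same support counting.

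\emph{Absorption.} Set \(U_{\sigma,L}=\|u\|_{\mathcal U^{\sigma,L}_{p,h_M}}\). The equation \(\varphi(D)u=Vu\) and Lemma~\ref{lem-V-mult-weighted} give
\[
\|\varphi(D)u\|_{\mathcal U^{\sigma,L}_{p,h_M}}\le \mathcal V_\sigma U_{\sigma,L}.
\]
Now split \(u=P_{\le N}u+P_{>N}u\) with \(N\ge h_M\). Using \eqref{eq-Up-low}, whose proof only uses kernel bounds and is unaffected by the multiplier, together with \eqref{eq-phi-high} and \(\|u\|_p=1\), we get
\[
U_{\sigma,L}\le Ce^{C\sigma N}+C\varphi(N)^{-1}\mathcal V_\sigma U_{\sigma,L}.
\]

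\emph{Choice of \(N\).} Take \(N=N_\sigma=h_M+\Phi_\varphi\bigl(2C(1+\mathcal V_\sigma)\bigr)\). Since \(\varphi\) is nondecreasing and tends to infinity, the set \(\{R:\varphi(R)\ge t\}\) is a nonempty half-line. So for any \(R>\Phi_\varphi(t)\) we have \(\varphi(R)\ge t\).

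This is the delicate point. If \(\varphi\) is only right-continuous up to a jump, \(\varphi(\Phi_\varphi(t))\) itself might be smaller than \(t\). This is harmless here: because \(h_M\ge1\), we have \(N_\sigma\ge \Phi_\varphi(t)+1>\Phi_\varphi(t)\). Hence
\[
\varphi(N_\sigma)\ge 2C(1+\mathcal V_\sigma),
\]
and the coefficient \(C\varphi(N_\sigma)^{-1}\mathcal V_\sigma\) is at most \(1/2\). Also \(N_\sigma\ge h_M\), so Lemma~\ref{lem-phi-high} applies.

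Since \(U_{\sigma,L}<\infty\), the term can be absorbed, giving \(U_{\sigma,L}\le 2Ce^{C\sigma N_\sigma}\) uniformly in \(L\). Letting \(L\to\infty\) by monotone convergence yields \eqref{eq-phi-apriori}.

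The main obstacle is making sure every auxiliary lemma is uniform in the truncation parameter \(L\), and that Lemma~\ref{lem-phi-high} is applied only with \(N\ge h_M\). Both points are handled as above. No admissibility condition beyond (ii), which is already used inside Lemma~\ref{lem-phi-high}, is needed at this stage. Condition (i) enters only later, when \(\sigma\) is optimized in terms of \(m\).
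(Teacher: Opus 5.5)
Your proposal is correct and follows the paper's proof. The steps match: the truncated norm $\mathcal U^{\sigma,L}_{p,h_M}$, the bound $\|\varphi(D)u\|_{\mathcal U^{\sigma,L}_{p,h_M}}\le\mathcal V_\sigma U_{\sigma,L}$ from Lemma~\ref{lem-V-mult-weighted}, the low--high split with Lemma~\ref{lem-phi-high} for $N\ge h_M$, absorption with $N_\sigma=h_M+\Phi_\varphi(2C(1+\mathcal V_\sigma))$, and then $L\to\infty$. You also note that $h_M\ge1$ forces $N_\sigma>\Phi_\varphi(2C(1+\mathcal V_\sigma))$, so $\varphi(N_\sigma)\ge 2C(1+\mathcal V_\sigma)$ without any continuity at the infimum; the paper leaves this small justification implicit.
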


\begin{proof}
We first justify the a priori use of the analytic frequency-uniform norm.  As in the
proof of Lemma~\ref{lem-Up-apriori}, we work first with the truncated norm
\[
    \|f\|_{\mathcal U^{\sigma,L}_{p,h}}
    :=
    \sup_{k\in\mathbb Z^n}
    e^{\sigma h(|k|_\infty\wedge L)}
    \|\Box_{h,k}f\|_{L^p},
    \qquad L\ge1.
\]
The multiplication estimate for \(V\), the low-frequency estimate, and the
high-frequency estimate for \(\varphi(D)\) remain valid in this truncated norm, with
constants independent of \(L\).  Thus the absorption argument below is first performed
in \(\mathcal U^{\sigma,L}_{p,h_M}\), and the limit \(L\to\infty\) is taken at the end,
exactly as in the proof of Lemma~\ref{lem-Up-apriori}.

By Lemma~\ref{lem-V-mult-weighted},
\[
    \|Vu\|_{\mathcal U^{\sigma,L}_{p,h_M}}
    \le
    \mathcal V_\sigma
    \|u\|_{\mathcal U^{\sigma,L}_{p,h_M}},
\]
where
\begin{align}\label{eq-g-Nsigma-1} 
\mathcal V_\sigma
    :=
    C_\varepsilon A_0
    \exp\left((1+\varepsilon)(1-\kappa)(\sigma M)^{1/(1-\kappa)}
    +C_\varepsilon\sigma\right).
\end{align} 
Set
\[
    U_{\sigma,L}:=\|u\|_{\mathcal U^{\sigma,L}_{p,h_M}},
    \qquad
    Y_{\sigma,L}:=\|\varphi(D)u\|_{\mathcal U^{\sigma,L}_{p,h_M}}.
\]
Since \(\varphi(D)u=Vu\), we have
\[
    Y_{\sigma,L}\le \mathcal V_\sigma U_{\sigma,L}.
\]
The low-frequency estimate and Lemma~\ref{lem-phi-high} give, for \(N\ge h_M\),
\[
    U_{\sigma,L}
    \le
    C e^{C\sigma N}+C\varphi(N)^{-1}Y_{\sigma,L}
    \le
    C e^{C\sigma N}
    +
    C\varphi(N)^{-1}\mathcal V_\sigma U_{\sigma,L}.
\]
Choose
\[
    N_\sigma
    :=   h_M+\Phi_\varphi\big(2C(1+\mathcal V_\sigma)\big)
\]
such that
\(C\varphi(N_\sigma)^{-1}\mathcal V_\sigma\le \frac12\), and hence
\[
    U_{\sigma,L}
    \le
    C e^{C\sigma N_\sigma},
\]
uniformly in \(L\).  Passing to the limit \(L\to\infty\), as in
Lemma~\ref{lem-Up-apriori}, gives \eqref{eq-phi-apriori}.
\end{proof}

\begin{proof}[Proof of Theorem \ref{thm-general-phi}]
Let \(m=|\alpha|\) and
\[
        L_\varphi(m):=\log(e+\varphi(m)).
\]
Fix \(\theta>1\).  We shall prove the estimate with
\[
        B_{\theta,\varphi}:=\theta(1-\kappa)^{1-\kappa} M+K_{p,\theta,\varphi,A_0},
\]
where \(K_{p,\theta,\varphi,A_0}\) is chosen sufficiently large.  Choose small \(\varepsilon>0\) and \(\eta>0\) so that
\begin{align}\label{eq-g-Nsigma-2} 
(1+\varepsilon)(1+\eta)^{1/(1-\kappa)}\theta^{-1/(1-\kappa)}<1.    
\end{align}

For \(m\ge1\), set
\begin{align}\label{eq-sigma-phi-choice}
        \sigma_m:=\frac{1+\eta}{B_{\theta,\varphi}}L_\varphi(m)^{1-\kappa} .
\end{align}
Since \(B_{\theta,\varphi}\ge \theta(1-\kappa)^{1-\kappa} M\), we have
\begin{align}\label{eq-g-Nsigma-3}         (1+\varepsilon)(1-\kappa)(\sigma_m M)^{1/(1-\kappa)}
        \le (1+\varepsilon)(1+\eta)^{1/(1-\kappa)}\theta^{-1/(1-\kappa)}L_\varphi(m).
\end{align}
After increasing \(K_{p,\theta,\varphi,A_0}\), the additional term \(C_\varepsilon\sigma_m\) in \eqref{eq-g-Nsigma-1}  is absorbed into a small multiple of \(L_\varphi(m)\).  Hence by \eqref{eq-g-Nsigma-1}, \eqref{eq-g-Nsigma-2} and \eqref{eq-g-Nsigma-3}, there exists \(a_\theta\in(0,1)\) such that
\begin{align}\label{eq-Vsigma-phi-subpower}
        \mathcal V_{\sigma_m}\le C_{p,\theta,\varphi,A_0}(e+\varphi(m))^{a_\theta}.
\end{align}
By admissibility \eqref{assump-admissible-varphi-1} and \eqref{eq-phi-Nsigma}, there exist \(q=q_{\theta,\varphi}\in(0,1)\) and \(C_{p,\theta,\varphi,A_0}>0\) such that
\begin{align}\label{eq-Nsigma-phi-sublinear}
        N_{\sigma_m}\le h_M+C_{p,\theta,\varphi,A_0}(m+e)^q .
\end{align}
Consequently,
\begin{align}\label{eq-sigma-N-phi-subexp}
        \sigma_mN_{\sigma_m}=o(m).
\end{align}
Indeed, the second term in \eqref{eq-Nsigma-phi-sublinear} is subexponential by the choice of
\(q<1\), while the additional term \(h_M\) is harmless because
\(B_{\theta,\varphi}\ge \theta(1-\kappa)^{1-\kappa} M\), and hence
\(\sigma_mh_M\lesssim L_\varphi(m)^{1-\kappa}=o(m)\). Here we used Remark \ref{rem:admissible-symbol-condition}.

It remains to extract derivatives from the frequency-uniform analytic norm.   As in the proof of Theorem \ref{thm-fractional-Lp}, using \eqref{eq-phi-apriori}, we obtain
\begin{align}\label{eq-phi-derivative-before-choice}
        \|D^\alpha u\|_p
        \le C_{\eta,n}e^{C_n\sigma h_M} \frac 1 {(\sigma h_M)^n} (m+1)^{n-1}
        \frac{m!}{\sigma^m}e^{C\sigma N_\sigma}.
\end{align}
We now put $\sigma=\sigma_m$.  Since $h_M=1+\delta M$ with $\delta$ fixed, 
we have $\sigma_m h_M \lesssim \sigma_m M$ (up to a constant). 
Because $B_{\theta,\varphi}\ge \theta(1-\kappa)^{1-\kappa} M$, it follows that
\[
\sigma_m h_M \lesssim \frac{L_\varphi(m)^{1-\kappa}}{B_{\theta,\varphi}}M
\le C_\theta L_\varphi(m)^{1-\kappa}=o(m),
\]
where the last equality uses the admissibility condition which ensures 
$L_\varphi(m)=o(m^\rho)$ for every $\rho>0$ (see Remark \ref{rem:admissible-symbol-condition}).
Hence the factors $e^{C_n\sigma_m h_M}$, $(m+1)^{n-1}$, $(\sigma_m h_M)^{-n}$, and 
$e^{C\sigma_m N_{\sigma_m}}$ are all subexponential in $m$; they can be absorbed into 
a constant $C_{p,\theta,\varphi,A_0}(1+\eta)^m$ after increasing the constant 
to handle finitely many small $m$.  
Substituting \eqref{eq-sigma-phi-choice} into \eqref{eq-phi-derivative-before-choice} gives
\[
        \|D^\alpha u\|_p
        \le C_{p,\theta,\varphi,A_0}(1+\eta)^{m}\frac{m!}{\sigma_m^m}  = C_{p,\theta,\varphi,A_0}B_{\theta,\varphi}^m
        \frac{m!}{L_\varphi(m)^{(1-\kappa)m}} .
\]
The case $m=0$ follows from $\|u\|_p=1$, after enlarging the constant.  
This proves \eqref{eq-main-general-phi}.
\end{proof}

\begin{remark}[Stable-like nonlocal symbols] 
The admissible multiplier theorem is formulated for radial symbols \(\varphi(|\xi|)\). Nevertheless, the fractional estimate in Theorem~\ref{thm-fractional-Lp} for \(\Lambda^s\) extends, in the range \(0<s<2\) and \(1<p<\infty\), to non-radial translation-invariant operators that are \(L^p\)-elliptically comparable to \(\Lambda^s\). More precisely, it suffices to assume that \(L(D)\) satisfies \[ \|\Lambda^s w\|_{L^p(\mathbb R^n)} \le C_L \|L(D)w\|_{L^p(\mathbb R^n)} \] for all Schwartz functions \(w\). Then the proof of Theorem~\ref{thm-fractional-Lp} applies to solutions of \(L(D)u=Vu\), with constants depending additionally on \(C_L\) and on the structural constants of \(L\). Typical examples are the stable-like operators \[ L(D)=\psi_a(D),\qquad \psi_a(\xi) = \int_{\mathbb R^n} \bigl(1+iy\cdot \xi(1_{s\in (1,2)}+1_{s=1}1_{y\in B_1})-e^{iy\cdot\xi}\bigr) \frac{a(y)}{|y|^{n+s}}\, \d y,\]
\[
0<c_1\le a(y)\le c_2,\quad\text{and when $s=1$,}\,\int_{\partial B_r} a(y)\, \d \sigma_y=0\quad\forall r>0,\] 
for which the required comparison follows from the nonlocal elliptic \(L^p\)-theory of Dong--Kim~\cite{DongKim12}. The anisotropic stable operator \[ L(D)=\sum_{j=1}^n |D_j|^s \] is another example; its comparison follows from the \(L^p\)-theory for singular L\'evy measures, for instance by the method of Dong--Ryu~\cite{DongRyu26}. 
\end{remark}

\begin{proof}[Proof of the logarithmic-multiplier assertion in Theorem \ref{thm-general-phi}]
We finally verify explicitly that
\[
        \varphi(r)=\log(e+r)
\]
is admissible and yields the double logarithm in \eqref{eq-main-log-multiplier}.  In this case
\begin{align}\label{eq-logmult-Lphi-asymptotic}
        L_\varphi(m)=\log(e+\log(e+m)) .
\end{align}
Moreover,
\[
        \Phi_\varphi(t)=\exp(t)-e .
\]
Let \(a\in(0,1)\) and \(A\ge1\).  Then
\begin{align}\label{eq-logmult-inverse-growth}
        \Phi_\varphi\big(A(e+\varphi(r))^a\big)
        \le \exp\big(C_A(\log(e+r))^a\big).
\end{align}
Since \(a<1\), for every \(q\in(0,1)\) there exists \(C_{A,a,q}>0\) such that
\begin{align}\label{eq-logmult-sublinear-growth}
        \exp\big(C_A(\log(e+r))^a\big)\le C_{A,a,q}(r+e)^q .
\end{align}
Thus \eqref{assump-admissible-varphi-1} holds by \eqref{eq-logmult-inverse-growth}--\eqref{eq-logmult-sublinear-growth}.  Moreover, \eqref{assump-admissible-varphi-2} follows from the elementary bounds
\[
        |(r\partial_r)^j\log(e+r)|\le C_j\log(e+r),
        \qquad r>0,
\]
for every fixed \(j\).  Hence \(\varphi(r)=\log(e+r)\) is admissible.  Applying Theorem \ref{thm-general-phi} and \eqref{eq-logmult-Lphi-asymptotic} gives
\[
        \|D^\alpha u\|_p
        \le C_{p,\theta,A_0}
        \left[\theta(1-\kappa)^{1-\kappa} M+K_{p,\theta,A_0}\right]^{|\alpha|}
        \frac{|\alpha|!}{\big(\log(e+\log(e+|\alpha|))\big)^{(1-\kappa)|\alpha|}}.
\]
This is the desired double-logarithmic analogue of the fractional estimate.
\end{proof}

\begin{remark}
For \(\varphi(r)=r^s\), the admissibility conditions are immediate, and one has \(L_\varphi(m)=\log(e+m^s)\sim s\log(m+e)\).  Therefore, Theorem \ref{thm-general-phi} gives
\[
        \frac{\bigl((1-\kappa)^{1-\kappa} M\bigr)^m}{L_\varphi(m)^{(1-\kappa)m}}
        \sim
        \left(M\left(\frac{1-\kappa}{s}\right)^{1-\kappa}\right)^m
        \frac1{\big(\log(m+e)\big)^{(1-\kappa)m}},
\]
which recovers Theorem \ref{thm-fractional-Lp}.  The logarithmic multiplier is slower than any power, and this is exactly why a double logarithm replaces the single logarithm.
\end{remark}

\section{Sharpness and optimality discussion}\label{sec-sharpness-discussion}

This section discusses the sharpness of the logarithmic factors and of the leading scale in
the preceding estimates. The sharpness results are based on periodic Fourier constructions supported on
one side of the frequency spectrum, namely on nonnegative Fourier modes. We give the fractional construction in detail, since it detects both the
optimal logarithmic power and the optimal leading coefficient in front of the type parameter
\(M\). We then record the logarithmic-multiplier analogue, where the same frequency-recursion
mechanism shows the optimality of the double-logarithmic scale for the model multiplier
\(\varphi(r)=\log(e+r)\).

\subsection{Sharpness of the fractional estimate}

We now show that, for every fractional order $s>0$, the logarithmic power and the
leading coefficient in Theorem \ref{thm-fractional-Lp} are forced by the equation.
The sharpness results are based on periodic positive-frequency Fourier
constructions.  This is enough to rule
out any uniform improvement of the fractional estimate.

\begin{proposition}[Optimality of the leading coefficient for fractional powers]
\label{prop-periodic-sharp-leading-coefficient-fractional}
Let $s>0$ and $0\le\kappa<1$.  In the class of
$2\pi$-periodic equations $\Lambda^s u=Vu$ with
$\|V^{(m)}\|_{L^\infty(\mathbb T)}\le C_0M^m(m!)^\kappa$, the following two
improvements of Theorem \ref{thm-fractional-Lp} are impossible.
\begin{enumerate}
\item If \(b<\left(\frac{1-\kappa}{s}\right)^{1-\kappa}\), then there are no constants \(C,K>0\), independent of \(M\), such that every normalized solution in this class, say with \(\|u\|_{L^\infty(\mathbb T)}=1\), satisfies
\begin{align}\label{eq-false-leading-coeff-fractional}
  \|u^{(m)}\|_{L^\infty(\mathbb T)}
  \le C (bM+K)^m
  \frac{m!}{\bigl(\log(m+e)\bigr)^{(1-\kappa)m}},
  \qquad m=0,1,2,\ldots .
\end{align}
\item For every $\varepsilon>0$, the denominator
$\bigl(\log(m+e)\bigr)^{(1-\kappa)m}$ cannot in general be replaced with
$\bigl(\log(m+e)\bigr)^{(1-\kappa+\varepsilon)m}$.
\end{enumerate}
\end{proposition}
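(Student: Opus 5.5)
We will build explicit $2\pi$-periodic counterexamples whose Fourier coefficients all lie on nonnegative modes and are nonnegative. For such $V$ the equation $\Lambda^s u=Vu$ becomes a \emph{triangular} recursion on the Fourier coefficients, which can be solved exactly.

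\textbf{The recursion.} Write
\[
u=\sum_{k\ge0}a_ke^{ikx},\qquad V=\sum_{R\in\mathcal R}c_Re^{iRx},
\]
with $\mathcal R\subset\{1,2,\dots\}$ and $c_R\ge0$. The equation reads
\[
k^sa_k=\sum_{R\in\mathcal R,\,R\le k}c_Ra_{k-R}\quad (k\ge1),
\]
while the mode $k=0$ imposes nothing because $V$ has no zero mode. Set $a_0=1$; the recursion then determines all $a_k$. By induction every $a_k\ge0$, and $a_k$ is monotone in each $c_R$. In particular, keeping only the chain of steps of a single frequency $R$ gives
\[
a_{jR}\ge \frac{c_R^j}{R^{js}(j!)^s}.
\]
These coefficients decay super-factorially, so $u$ is smooth. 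Also $\|u\|_{L^\infty}=u(0)=\sum a_k<\infty$, since every term $a_ke^{ik\cdot 0}=a_k$ is nonnegative. After dividing by this constant we get $\|u\|_\infty=1$. The normalization constant stays bounded as $M\to\infty$, because the contribution of any fixed frequency to it tends to zero as $R$ grows. Finally, $\|u^{(m)}\|_\infty\ge |k|^m a_k$ for every $k$.

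\textbf{Part (1).} Use a single mode $V=c_Re^{iRx}$. Then
\[
\|V^{(m)}\|_\infty=c_RR^m\le C_0M^m(m!)^\kappa\quad\text{for all } m
\]
exactly when $c_R\le C_0\inf_m M^m(m!)^\kappa R^{-m}$, which by Stirling is of size $\exp\bigl(-\kappa(R/M)^{1/\kappa}\bigr)$ up to polynomial factors. For $\kappa=0$ simply take $R=M$ and $c_R=C_0$. This gives
\[
\log\|u^{(m)}\|_\infty\ge m\log(jR)-j\kappa(R/M)^{1/\kappa}-js\log R-sj\log j-O(j),
\]
and we maximize over $j$ and $R$.

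The heuristic optimum is $j\approx m/(s\log m)$ and $R\approx M\bigl(\tfrac{1-\kappa}{s}\log m\bigr)^{\kappa}$. This is the same Legendre computation as in Lemma~\ref{lem-lacunary-summation}, run in reverse. Comparing with $m!\approx(m/e)^m$, we expect
\[
\|u^{(m)}\|_\infty\ge e^{-o(m)}\Bigl(M\bigl(\tfrac{1-\kappa}{s}\bigr)^{1-\kappa}\Bigr)^m\frac{m!}{(\log m)^{(1-\kappa)m}}.
\]
For $\kappa=0$ this is the clean bound $(M/s)^m m!/\log^m m$.

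Now suppose \eqref{eq-false-leading-coeff-fractional} held with some $b<\bigl(\frac{1-\kappa}{s}\bigr)^{1-\kappa}$. For each large $m$, choose $M=M(m)$ so large that $bM+K<b'M$ for a fixed $b'$ strictly between $b$ and the sharp constant, and choose $R$ as above (an integer, adjusted by rounding). The lower bound then exceeds $C(bM+K)^m m!/(\log m)^{(1-\kappa)m}$, a contradiction. The constants $C$ and $K$ are not allowed to depend on $M$, which is exactly what this argument exploits.

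\textbf{Part (2).} Here a single fixed example must fail for infinitely many $m$. Fix $M$, and take a lacunary sequence $R_\ell\to\infty$ with
\[
c_{R_\ell}=\tfrac12 C_0\,2^{-\ell}\exp\bigl(-\kappa(R_\ell/M)^{1/\kappa}\bigr)\times(\text{polynomial correction}),
\]
so that $\|V^{(m)}\|_\infty\le\sum_\ell c_{R_\ell}R_\ell^m\le C_0'M^m(m!)^\kappa$. For $\kappa=0$ use instead a single mode $R=M$, for which one mode already suffices for every $m$. Positivity and monotonicity show that each mode $R_\ell$ individually yields the part (1) lower bound along the scales $m$ with $R_\ell\approx M(\log m)^\kappa$. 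This gives a lower bound of order $c^m m!/(\log m)^{(1-\kappa)m}$ for infinitely many $m$. That exceeds $CA^m m!/(\log m)^{(1-\kappa+\varepsilon)m}$ for any $A$, because $(\log m)^{\varepsilon m}$ beats every geometric factor.

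\textbf{Main obstacle.} The delicate step is the precise two-parameter optimization in $j$ and $R$, with integer constraints and the $2^{-\ell}$ losses. All errors must be shown to be $e^{o(m)}$, so that the sharp constant $\bigl(\frac{1-\kappa}{s}\bigr)^{1-\kappa}$ survives exactly. I would handle this by first fixing the exact continuous optimizers, then checking that rounding $j$, $R$ and $m$-dependent choices perturb the logarithm by only $O(m/\log m)$.
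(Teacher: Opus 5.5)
Your proof works, and it rests on the same mechanism as the paper's: a one-sided nonnegative Fourier recursion, with the lower bound coming from iterating a single jump of size $R\approx M(\frac{s}{1-\kappa}\log n)^\kappa$. The organization, however, is genuinely different.

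\textbf{What the paper does.} It fixes one potential $V_M=\sum_{k\ge1}v_ke^{ikx}$ carrying all positive modes and proves $u_M$ is smooth with a comparison majorant $A_n$. It bounds $a_{n_L}$ from below along a subsequence, then converts the putative estimate into Fourier decay by optimizing $a_n\le C\|u^{(m)}\|_\infty n^{-m}$ over $m$. The single solution $u_M$ thereby defeats both (1) and (2).

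\textbf{What you do instead.} You use exactly solvable single-mode potentials, for which boundedness, smoothness and the normalization constant are trivial. In fact $c_R=C_0e^{-\kappa(R/M)^{1/\kappa}}$ already lies in the class, since $m!\ge(m/e)^m$, so no polynomial correction is needed. You then optimize $\|u^{(m)}\|_\infty\ge n^m a_n$ over $n=jR$ and $R$. This is the Legendre-dual side of the paper's optimization, and it shows that the bound of Theorem~\ref{thm-fractional-Lp} is attained up to $e^{o(m)}$ at every large $m$, not just along a subsequence.

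\textbf{The price.} Your potential depends on $m$. That suffices for (1), which is a uniform statement over the class. Simply fix one integer $M$ with $(b'-b)M>K$ and let $m\to\infty$; there is no reason to let $M$ depend on $m$, and with $M$ fixed the $e^{o(m)}$ errors, which involve $\log n=\log m+O(\log M+\log\log m)$, are harmless. For a single counterexample in (2), however, you need the extra lacunary construction.

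Two points must be repaired.

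\textbf{First, the displayed optimizers are wrong for $0<\kappa<1$.} Minimizing $(\kappa x^{1/\kappa}+s\log n)/x$ gives $x^{1/\kappa}=\frac{s}{1-\kappa}\log n$, with minimum value $(\frac{s}{1-\kappa}\log n)^{1-\kappa}$. Hence $R\approx M\bigl(\frac{s}{1-\kappa}\log m\bigr)^\kappa$, which is the paper's $q_L$, and then $j\approx\frac{(1-\kappa)m}{s\log m}$. With your ratio $\frac{1-\kappa}{s}$ inverted, the base of the lower bound comes out strictly below $\bigl(\frac{1-\kappa}{s}\bigr)^{1-\kappa}M$. For example, with $s=1$ and $\kappa=\tfrac12$ it is about $0.57M$ instead of $0.71M$, so the contradiction fails for $b$ near the threshold. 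Your final displayed lower bound is the correct one, so you only need to carry out the optimization with the right values.

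\textbf{Second, in (2) the lacunary $u$ is not shown to converge.} The chain inequality $a_{jR}\ge c_R^jR^{-js}(j!)^{-s}$ is only a lower bound, so it says nothing about convergence or smoothness of the lacunary $u$. You need an upper bound. One option: unroll the recursion as a sum over jump sequences whose partial sums satisfy $S_i\ge i$. With $S=\sum_Rc_R$ this gives $\sum_k k^ma_k\le 1+\sum_{p\ge1}p^mS^{p-1}(p!)^{-s}\sum_Rc_RR^m<\infty$. Alternatively, combine monotonicity in the $c_R$ with the paper's majorant.
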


\begin{proof}
We work on \(\mathbb T=\mathbb R/2\pi\mathbb Z\), where
\[
  \Lambda^s e^{inx}=|n|^s e^{inx}.
\]

\smallskip\noindent{\it Step 1: a one-sided periodic model.}
First assume \(0<\kappa<1\).  Let \(M\ge2\) be a large parameter.  Define a
one-sided periodic potential
\begin{align*}
  V_M(x):=\sum_{k=1}^\infty v_k e^{ikx},
  \qquad
  v_k:=\frac{\eta}{M}
  \exp\left[-\kappa\left(\frac{k}{M}\right)^{1/\kappa}\right],
\end{align*}
where \(0<\eta\ll1\) is fixed.  A standard Laplace estimate gives
\begin{align*}
  \|V_M^{(m)}\|_{L^\infty(\mathbb T)}
  \le
  \sum_{k=1}^\infty k^m v_k
  \le
  C_\eta M^m(m!)^\kappa,
  \qquad m=0,1,2,\ldots .
\end{align*}
Indeed, after the change of variables \(k=Mr\), the sum is controlled by
\[
  M^m\int_0^\infty r^m
  \exp(-\kappa r^{1/\kappa})\,dr
  \lesssim
  M^m(m!)^\kappa .
\]

We construct \(u_M\) by a positive Fourier recursion.  Let \(a_0=1\), and for
\(n\ge1\) set
\begin{align}\label{eq-leading-coeff-recursion}
  n^s a_n
  =
  \sum_{k=1}^n v_k a_{n-k}.
\end{align}
Define
\[
  u_M(x):=\sum_{n=0}^\infty a_n e^{inx}.
\]
Then \eqref{eq-leading-coeff-recursion} is exactly the Fourier coefficient identity
for
\[
  \Lambda^s u_M=V_Mu_M.
\]

\smallskip\noindent{\it Step 2: smoothness of the constructed solution.}
We show that \(u_M\) is smooth.  Let
\begin{align*}
  A_n:=\exp\left[
    -c\frac{n}{M}\bigl(\log(n+e)\bigr)^{1-\kappa}
  \right] 
\end{align*}
with some small  \(c>0\) determined later. It is easy to see that for some $C=C(\kappa)>0$
\begin{align}\label{eq-sharp-frac-ratio-An}
  \frac{A_{n-k}}{A_n}
  \le
  \exp\left[
    Cc\frac{k}{M}\bigl(\log(n+e)\bigr)^{1-\kappa}
  \right],
  \qquad 1\le k\le n.
\end{align}
Indeed, this follows by applying the mean value theorem to $F(x)=x(\log(x+e))^{1-\kappa}$. Since $A_n=\exp[-(c/M)F(n)]$, the ratio $A_{n-k}/A_n$ is controlled by the increment $F(n)-F(n-k)$. The derivative of $F$ is bounded by $C_\kappa(\log(n+e))^{1-\kappa}$ on $[0,n]$, and hence $F(n)-F(n-k)\le C_\kappa k(\log(n+e))^{1-\kappa}$. This proves \eqref{eq-sharp-frac-ratio-An}. 

Hence, using \eqref{eq-sharp-frac-ratio-An},
\begin{align*}
  n^{-s}\sum_{k=1}^n v_k A_{n-k}
  \le
  A_n n^{-s}\frac{\eta}{M}
  \sum_{k=1}^n
  \exp\left[
    -\kappa\left(\frac{k}{M}\right)^{1/\kappa}
    +
    Cc\frac{k}{M}\bigl(\log(n+e)\bigr)^{1-\kappa}
  \right].
\end{align*}
Young's inequality gives
\[
Cc\,x \bigl(\log(n+e)\bigr)^{1-\kappa}
\le
\frac{\kappa}{2}x^{1/\kappa}
+C c^{1/(1-\kappa)}\log(n+e),
\qquad x\ge0 .
\]
Applying this with $x=k/M$, we obtain
\begin{multline*}
\frac1M\sum_{k=1}^n
\exp\left[
-\kappa\left(\frac{k}{M}\right)^{1/\kappa}
+
Cc\frac{k}{M}\bigl(\log(n+e)\bigr)^{1-\kappa}
\right]\\
\le
e^{C c^{1/(1-\kappa)}\log(n+e)}
\frac1M\sum_{k=1}^\infty
\exp\left[
-\frac{\kappa}{2}\left(\frac{k}{M}\right)^{1/\kappa}
\right]
\le C n^{C c^{1/(1-\kappa)}}.    
\end{multline*} 
Therefore,
\begin{align}\label{eq-sharp-frac-induction-majorant}
n^{-s}\sum_{k=1}^n v_k A_{n-k}
\le
C\eta n^{-s+C c^{1/(1-\kappa)}} A_n
\le A_n
\end{align}
after choosing $c>0$ so small that $C c^{1/(1-\kappa)}<s$, and then choosing
$\eta>0$ sufficiently small. By induction from
\eqref{eq-leading-coeff-recursion} and \eqref{eq-sharp-frac-induction-majorant},
\begin{align*}
a_n\le A_n,
\qquad n=0,1,2,\ldots .
\end{align*}
Thus $u_M\in C^\infty(\mathbb T)\subset L^\infty(\mathbb T)$.

We shall use this solution only up to multiplication by a constant.  Indeed, the
equation \(\Lambda^s u=Vu\) is homogeneous in \(u\).  Hence, if the statement to
be contradicted is formulated for normalized solutions, we may replace \(u_M\) with
\[
        \widetilde u_M:=\frac{u_M}{\|u_M\|_{L^\infty(\mathbb T)}} .
\]
This only multiplies all Fourier coefficients by the fixed factor
\(\|u_M\|_{L^\infty(\mathbb T)}^{-1}\).  Equivalently, any upper bound obtained
for the normalized solution gives the same upper bound for the coefficients
\(a_n=\widehat u_M(n)\), up to a harmless multiplicative constant depending on
\(M\).  Since \(M\) is fixed before the limiting sequence \(n=n_L\to\infty\) is
taken, this constant has no effect on the exponential comparison below.

\smallskip\noindent{\it Step 3: a lower bound along a selected subsequence.}
We now prove the lower bound on selected Fourier coefficients.  Let \(L\to\infty\),
and choose
\begin{align*}
  q_L:=\left\lfloor
    M\left(\frac{sL}{1-\kappa}\right)^\kappa
  \right\rfloor,
  \qquad
  \ell_L:=\lfloor e^L\rfloor,
  \qquad
  n_L:=q_L\ell_L .
\end{align*}
Iterating the single jump \(k=q_L\) in the recursion gives
\begin{align}\label{eq-sharp-frac-single-jump}
  a_{j q_L}
  \ge
  \frac{v_{q_L}}{(j q_L)^s}
  a_{(j-1)q_L},
  \qquad j=1,2,\ldots,\ell_L.
\end{align}
Therefore, by iterating \eqref{eq-sharp-frac-single-jump},
\begin{align*}
  a_{n_L}
  \ge
  \frac{v_{q_L}^{\ell_L}}
       {q_L^{s\ell_L}(\ell_L!)^s}.
\end{align*}
Using Stirling's formula, we obtain
\[
  -\log a_{n_L}
  \le
  \ell_L
  \left[
    \kappa\left(\frac{q_L}{M}\right)^{1/\kappa}
    +s\log(q_L\ell_L)
    +O(\log M+\log L)
  \right].
\]
Since
\begin{align*}
  \log n_L=L+O(\log M+\log L)
\end{align*}
and
\begin{align*}
  q_L
  =
  M\left(\frac{sL}{1-\kappa}\right)^\kappa(1+o(1)),
\end{align*}
we get
\begin{align}\label{eq-leading-coeff-lower-bound}
  a_{n_L}
  \ge
  \exp\left[
    -\left(
      \left(\frac{s}{1-\kappa}\right)^{1-\kappa}+o(1)
    \right)
    \frac{n_L}{M}
    \bigl(\log n_L\bigr)^{1-\kappa}
  \right]
\end{align}
as \(L\to\infty\).  The constant in the exponent follows from
\[
  \frac{
    \kappa(q_L/M)^{1/\kappa}+s\log n_L
  }{q_L}
  =
  \left(\frac{s}{1-\kappa}\right)^{1-\kappa}
  \frac{(\log n_L)^{1-\kappa}}{M}
  (1+o(1)).
\]

\smallskip\noindent{\it Step 4: contradiction to improved estimates.}
Suppose now that \eqref{eq-false-leading-coeff-fractional} held with some
\begin{align}\label{eq-leading-smaller}
  b<\left(\frac{1-\kappa}{s}\right)^{1-\kappa} .    
\end{align} 
Since \(\widehat{u_M}(n)=a_n\), we would have, for every \(m\ge0\),
\begin{align*}
  a_n
  \le
  C\frac{\|u_M^{(m)}\|_{L^\infty}}{n^m}
  \le
  C
  \left(\frac{bM+K}{n}\right)^m
  \frac{m!}{\bigl(\log(m+e)\bigr)^{(1-\kappa)m}}.
\end{align*}
Choosing
\begin{align*}
  m\sim
  \frac{n}{bM+K}\bigl(\log n\bigr)^{1-\kappa},
\end{align*}
yields
\begin{align*}
  a_n
  \le
  C
  \exp\left[
    -\left(\frac{1}{bM+K}+o(1)\right)
    n(\log n)^{1-\kappa}
  \right].
\end{align*}
Since \eqref{eq-leading-smaller},
we may choose \(M\) sufficiently large so that
\[
  \frac{1}{bM+K}
  >
  \left(\frac{s}{1-\kappa}\right)^{1-\kappa}\frac{1+\delta}{M}
\]
for some \(\delta>0\).  Applying the last upper bound to \(n=n_L\) contradicts
\eqref{eq-leading-coeff-lower-bound} for \(L\) sufficiently large.  Hence no
estimate with leading coefficient \(b\) can hold.

The same example also rules out any improvement of the logarithmic power.  Indeed,
if for some \(\varepsilon>0\) one had
\[
  \|u_M^{(m)}\|_{L^\infty(\mathbb T)}
  \le
  C B^m
  \frac{m!}{\bigl(\log(m+e)\bigr)^{(1-\kappa+\varepsilon)m}},
  \qquad m=0,1,2,\ldots .
\]
Using the Fourier coefficient estimate \[|a_n|\le \|u_M^{(m)}\|_{L^\infty}/|n|^m\] and Stirling's formula \(m!\sim m^{1/2}(m/e)^m\), we obtain 
\[|a_n|\le C\left(\frac{Bm}{e|n|}\right)^m (\log(m+e))^{-(1-\kappa+\varepsilon)m}.\] 
Choosing \(m=\lfloor c\,n(\log n)^{1-\kappa+\varepsilon}\rfloor\), with \(c>0\) sufficiently small, we have
\[
  a_n
  \le
  C\exp\left[
    -c' n\bigl(\log n\bigr)^{1-\kappa+\varepsilon}
  \right]
\]
for all large \(n\).  This contradicts
\eqref{eq-leading-coeff-lower-bound} along \(n=n_L\).

\smallskip\noindent{\it Step 5: the endpoint \(\kappa=0\).}
It remains to treat the endpoint \(\kappa=0\).  Let
\(N=\lfloor M\rfloor\) and take
\[
  V_M(x):=  e^{iNx}.
\]
Then
\[
  \|V_M^{(m)}\|_{L^\infty(\mathbb T)}
  \le
    M^m,
  \qquad m=0,1,2,\ldots .
\]
Define
\[
  a_0=1,
  \qquad
  a_{jN}:=\frac{1}{N^{sj}(j!)^s},
  \qquad
  a_n:=0
  \quad\text{if }N\nmid n,
\]
and set
\[
  u_M(x):=\sum_{j=0}^\infty a_{jN}e^{ijNx}.
\]
Then
\[
  (jN)^s a_{jN}=  a_{(j-1)N},
\]
so again
\[
  \Lambda^s u_M=V_Mu_M.
\]
For \(n=jN\), Stirling's formula gives
\[
  a_n
  =
  \frac{1}{N^{sj}(j!)^s}
  \ge
  \exp\left[
    -\left(s+o(1)\right)
    \frac{n}{M}\log n
  \right].
\]
If an estimate of the form
\[
  \|u^{(m)}\|_{L^\infty}
  \le
  C(bM+K)^m
  \frac{m!}{\log^m(m+e)}
\]
held with \(b<1/s\), then the same Fourier coefficient optimization would imply
\[
  a_n
  \le
  C
  \exp\left[
    -\left(\frac{1}{bM+K}+o(1)\right)n\log n
  \right].
\]
For \(M\) large, this contradicts the lower bound above, since \(b<1/s\).

Likewise, if the denominator \(\log^m(m+e)\) were replaced with
\(\log^{(1+\varepsilon)m}(m+e)\), then one would obtain
\[
  a_n
  \le
  C\exp\left[-c n(\log n)^{1+\varepsilon}\right],
\]
again contradicting
\[
  a_n\ge \exp[-C n\log n]
\]
along the subsequence \(n=jN\).  This completes the proof.
\end{proof}

\subsection{Sharpness of the logarithmic multiplier}

We finally record that the double-logarithmic scale in Theorem \ref{thm-general-phi}
is optimal for the model multiplier \(\varphi(r)=\log(e+r)\).  The construction is
parallel to the fractional sharpness example, so we only indicate the necessary changes.
\begin{proposition}[A periodic obstruction for the double logarithm]
\label{prop-periodic-sharp-loglog-power}
Let $0\le\kappa<1$.  For every $\varepsilon>0$, there are
a $2\pi$-periodic potential $V$ and a nonzero bounded $2\pi$-periodic solution of
$\log(e+|D|)u=Vu$ such that
\[
  \|V^{(m)}\|_{L^\infty(\mathbb T)}\le C A^m(m!)^\kappa,
  \qquad m=0,1,2,\ldots,
\]
but no constants $B,C_1<\infty$ can make
\begin{align}\label{eq-false-loglog-kappa-improved}
  \|u^{(m)}\|_{L^\infty(\mathbb T)}
  \le C_1B^m
  \frac{m!}{\bigl(\log(e+\log(e+m))\bigr)^{(1-\kappa+\varepsilon)m}},
  \qquad m=0,1,2,\ldots .
\end{align}
Thus the exponent $1-\kappa$ in the double-logarithmic denominator of
\eqref{eq-main-log-multiplier} cannot be increased.
\end{proposition}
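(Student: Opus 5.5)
We mimic the proof of Proposition \ref{prop-periodic-sharp-leading-coefficient-fractional}, with the symbol $n^s$ replaced by $\log(e+n)$ on $\mathbb T$. Fix $A\ge2$; the type parameter plays no role in the conclusion, so $A=2$ would do. For $0<\kappa<1$ we take the same one-sided potential
\[
V(x)=\sum_{k\ge1}v_ke^{ikx},\qquad v_k=\eta\exp\bigl[-\kappa k^{1/\kappa}\bigr].
\]
The Laplace estimate of Step 1 there gives $\|V^{(m)}\|_{L^\infty}\le CA^m(m!)^\kappa$. For $\kappa=0$ we take $V=\eta e^{ix}$. The solution is defined by $a_0=1$ and the positive recursion
\[
\log(e+n)\,a_n=\sum_{k=1}^n v_ka_{n-k},\qquad n\ge1,
\]
and we set $u=\sum_n a_ne^{inx}$. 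This is exactly the coefficient identity for $\log(e+|D|)u=Vu$.

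The first task is to show that $u$ is smooth, or at least bounded with summable coefficients. Here the gain $\log(e+n)^{-1}$ is much weaker than $n^{-s}$, so we use a majorant adapted to it:
\[
A_n=\exp\bigl[-c\,n\,(\log(e+\log(e+n)))^{1-\kappa}\bigr].
\]
The mean value argument gives $A_{n-k}/A_n\le\exp[Cck\,\ell_n^{1-\kappa}]$ with $\ell_n=\log(e+\log(e+n))$. Young's inequality then bounds the sum by
\[
\exp\bigl[Cc^{1/(1-\kappa)}\ell_n\bigr]=(e+\log(e+n))^{Cc^{1/(1-\kappa)}}.
\]
For $c$ small this is at most $\tfrac12\log(e+n)$, and after shrinking $\eta$ induction gives $0\le a_n\le A_n$. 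Hence $u\in C^\infty$ and $u\ne0$. In the case $\kappa=0$ the computation is explicit: $a_j=\eta^j/\prod_{i\le j}\log(e+i)$, which decays like $\exp[-j\log\log j(1+o(1))]$.

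The key step is a lower bound along a subsequence, obtained with single jumps of size $q$:
\[
a_{jq}\ge\frac{v_q}{\log(e+jq)}\,a_{(j-1)q}.
\]
Iterating up to $n=q\ell$ gives
\[
-\log a_n\le\ell\bigl[\kappa q^{1/\kappa}+\log(1/\eta)+\log\log(e+n)\bigr].
\]
We choose $q\simeq(\log\log n)^\kappa$ and $\ell\simeq n/q$, which yields
\[
a_n\ge\exp\bigl[-Cn(\log\log n)^{1-\kappa}\bigr].
\]
For $\kappa=0$ we take $q=1$, giving $a_n\ge\exp[-Cn\log\log n]$. The choice of $q$ must be self-consistent, since $q$ depends on $n$ through $\log\log n$. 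We handle this as in Step 3, first fixing $L$, then setting $q_L=\lfloor(cL)^\kappa\rfloor$ and $\ell_L=\lfloor e^{e^L}\rfloor$.

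Finally, suppose \eqref{eq-false-loglog-kappa-improved} held. Then $a_n\le C_1n^{-m}B^mm!\,\ell_m^{-(1-\kappa+\varepsilon)m}$. With Stirling and the choice $m=\lfloor c\,n(\log\log n)^{1-\kappa+\varepsilon}\rfloor$ for small $c$, we have $\ell_m\sim\log\log n$, and so
\[
a_n\le\exp\bigl[-c'n(\log\log n)^{1-\kappa+\varepsilon}\bigr].
\]
This contradicts the lower bound along $n_L$. The main obstacle is the bookkeeping in the lower bound. We must check that the errors $\log(1/\eta)$ and $\log\log(e+n)$ per jump are genuinely of order $(\log\log n)^{1-\kappa}\cdot q$ and not larger. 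This holds because $q^{1/\kappa}\sim\log\log n$ balances both terms. Because only the power, not the constant, is claimed, crude constants suffice.
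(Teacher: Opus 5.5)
There is a genuine gap at frequency zero: your series is not a solution. On $\mathbb T$ the symbol of $\log(e+|D|)$ at $n=0$ is $\log e=1$, not $0$ as for $\Lambda^s$. Take your one-sided potential $V=\sum_{k\ge1}v_ke^{ikx}$ (or $V=\eta e^{ix}$ when $\kappa=0$) with $a_0=1$. The zeroth Fourier coefficient of $\log(e+|D|)u$ is $a_0=1$, while that of $Vu$ is $\sum_{k\ge1}v_ka_{-k}=0$. So your claim that the recursion ``is exactly the coefficient identity'' fails at $n=0$.

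Renormalizing $u$ or starting the recursion at a later mode does not repair this. Suppose $\widehat V(k)=0$ for all $k\le 0$ and $u$ has lowest nonzero mode $n_0\ge0$. Then the mode-$n_0$ equation reads $\log(e+n_0)\,a_{n_0}=0$. Hence no nonzero solution supported on nonnegative frequencies exists for such a potential. The fractional construction worked only because $\Lambda^s$ annihilates constants.

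The fix, which is what the paper does, is to give $V$ a constant term that balances the zero mode. Take $V=1+\sum_{k\ge1}v_ke^{ikx}$ (respectively $V=1+\eta e^{ix}$ for $\kappa=0$). The mode-$0$ equation then reads $a_0=a_0$. For $n\ge1$ the recursion becomes $(\log(e+n)-1)a_n=\sum_{k=1}^n v_ka_{n-k}$, where $\log(e+n)-1=\log(1+n/e)$ is positive and comparable to $\log n$. Adding the constant changes the derivative bounds on $V$ only at $m=0$.

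Your majorant induction still works, with $\eta$ small enough to cover small $n$, where $\log(1+n/e)$ is small. The single-jump lower bound still holds, since $\log(e+n)-1\le\log(e+n)$. The Fourier-coefficient optimization is unchanged. With this correction your argument coincides with the paper's.
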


\begin{proof}
We only indicate the changes from Proposition
\ref{prop-periodic-sharp-leading-coefficient-fractional}.  On $\mathbb T$,
\[
  \log(e+|D|)e^{inx}=\log(e+|n|)e^{inx}.
\]
For $0<\kappa<1$, take
\[
  V(x)=1+\sum_{k=1}^\infty v_ke^{ikx},
  \qquad v_k=\eta\exp(-\kappa k^{1/\kappa})
\]
with $0<\eta\ll1$.  Then $V$ satisfies the required derivative bounds.  Define
$a_0=1$ and, for $n\ge1$,
\begin{align}\label{eq-logD-recursion}
  \bigl(\log(e+n)-1\bigr)a_n=
  \sum_{k=1}^n v_ka_{n-k}.
\end{align}
The resulting series $u(x)=\sum_{n\ge0}a_ne^{inx}$ solves
$\log(e+|D|)u=Vu$.  As in the fractional proof, the comparison sequence
$A_n=\exp[-c n(\log(e+\log(e+n)))^{1-\kappa}]$ gives $a_n\le A_n$ after choosing
$c$ and $\eta$ small; hence $u$ is smooth.

The obstruction comes from the same single-jump iteration.  Let $L\to\infty$,
choose
\[
  q_L=\left\lfloor (L/(1-\kappa))^\kappa\right\rfloor,
  \qquad n_L=q_L\ell_L,
  \qquad \ell_L\sim e^{e^L}/q_L .
\]
Iterating the jump $k=q_L$ in \eqref{eq-logD-recursion} gives
\begin{align}\label{eq-logD-lower-subsequence}
  a_{n_L}
  \ge
  \exp\left[-\left((1-\kappa)^{-(1-\kappa)}+o(1)\right)
  n_L(\log\log n_L)^{1-\kappa}\right].
\end{align}
On the other hand, \eqref{eq-false-loglog-kappa-improved} would imply, by the
standard Fourier-coefficient optimization,
\[
  a_n\le C\exp[-c n(\log\log n)^{1-\kappa+\varepsilon}]
\]
for all large $n$, contradicting \eqref{eq-logD-lower-subsequence} along
$n=n_L$.

For $\kappa=0$, take $V(x)=1+\eta e^{ix}$ and
\[
  a_n=\frac{\eta^n}{\prod_{j=1}^n(\log(e+j)-1)}.
\]
Then $\log(e+|D|)u=Vu$ and $a_n\ge\exp[-C n\log\log(e+n)]$, while an estimate
with exponent $1+\varepsilon$ in the double logarithm would force
$a_n\le C\exp[-c n(\log\log n)^{1+\varepsilon}]$.  This contradiction completes
the proof.
\end{proof}

\begin{remark}
The logarithmic-multiplier example is the same one-sided Fourier mechanism as in
the fractional sharpness proof.  The only structural change is the denominator in
the recursion: $n^s$ is replaced with
  $\log(e+n)-1\sim\log n$.
This replacement changes the critical coefficient decay from
\(\exp[-c n(\log n)^{1-\kappa}]\) to
\(\exp[-c n(\log\log n)^{1-\kappa}]\), and hence changes the sharp denominator from a
single logarithm to the double logarithm
\(\log(e+\log(e+m))\).
\end{remark}

\section{Invariant ultra-analytic classes}\label{sec-invariant-classes}

In this section we make precise the invariant-class question introduced in the
Introduction.  The preceding results show that the fractional equation usually
weakens the ultra-analytic scale of the coefficient. We
formulate a transfer criterion with explicit Fourier-side hypotheses, and then
apply it to the concrete weights used in this paper.

\subsection{Weighted ultra-analytic classes}

We first fix the notation.  All classes in this section are {\it total-order}
classes: the derivative of order \(|\alpha|\) is measured with \(|\alpha|!\), not
with the coordinate factorial \(\alpha!\).  This is the same total-order convention
used in Theorems \ref{thm-fractional-Lp} and \ref{thm-general-phi}.

\begin{definition}[Admissible weights]
A sequence \(L=\{L_m\}_{m\ge0}\) is called an {\it admissible ultra-analytic weight}
if
\[
 L_0=L_1=1,
 \qquad
 1\le L_2\le L_3\le\cdots,
 \qquad
 L_m\to\infty\quad\text{as }m\to\infty.
\]
The monotonicity assumption is only a normalization.  The condition
\(L_m\to\infty\) is what makes the corresponding class smaller than the usual
analytic class.
\end{definition}

\begin{definition}[The spaces \(\mathcal U_L^p(A)\) and \(\mathcal U_L^p\)]
Let \(1\le p\le\infty\), let \(A>0\), and let \(L\) be an admissible weight.  We define
\begin{align*}
 \|f\|_{\mathcal U_L^p(A)}
 :=
 \sup_{\alpha\in\mathbb N^n}
 \frac{L_{|\alpha|}^{|\alpha|}}{A^{|\alpha|}|\alpha|!}
 \|D^\alpha f\|_{L^p(\mathbb R^n)}.
\end{align*}
Here we use the convention \(L_0^0=A^0=1\).  Then
\[
 \mathcal U_L^p(A)
 :=
 \{f\in C^\infty(\mathbb R^n):\|f\|_{\mathcal U_L^p(A)}<\infty\},
\]
and the qualitative class is
\begin{align*}
 \mathcal U_L^p
 :=
 \bigcup_{A>0}\mathcal U_L^p(A).
\end{align*}
Thus \(f\in\mathcal U_L^p\) means that there exist constants \(A,C>0\) such that
\begin{align}\label{eq-U-L-equivalent-bound-new}
 \|D^\alpha f\|_{L^p(\mathbb R^n)}
 \le
 C A^{|\alpha|}
 \frac{|\alpha|!}{L_{|\alpha|}^{|\alpha|}},
 \qquad \alpha\in\mathbb N^n.
\end{align}
\end{definition}

\begin{example}[Standard weights]
If \(L_m=m^\nu\), \(0<\nu\le1\), then \eqref{eq-U-L-equivalent-bound-new} gives
\[
 \|D^\alpha f\|_{L^p}
 \le
 C A^{|\alpha|}\frac{|\alpha|!}{|\alpha|^{\nu|\alpha|}}.
\]
By Stirling's formula this is equivalent, up to changing \(A\), to the Gevrey-type
ultra-analytic estimate
\[
 \|D^\alpha f\|_{L^p}
 \lesssim
 A^{|\alpha|}(|\alpha|!)^{1-\nu}.
\]
If \(L_m=(\log(m+e))^\gamma\), then \(\mathcal U_L^p\) is the logarithmic
ultra-analytic class
\[
 \|D^\alpha f\|_{L^p}
 \le
 C A^{|\alpha|}
 \frac{|\alpha|!}{\log^{\gamma|\alpha|}(|\alpha|+e)}.
\]
\end{example}

\subsection{A transfer criterion for ultra-analytic scales}

For a weight \(L\) and a parameter \(A>0\), define the associated function
\[
 \Omega_{L,A}(R)
 :=
 \sup_{N\ge1}
 \left\{
 N\log\left(\frac{RL_N}{A}\right)-\log(N!)
 \right\},
 \qquad R\ge1,
\]
and its Legendre-type dual
\[
 H_{L,A}(\sigma)
 :=
 \sup_{R\ge1}\{\sigma R-\Omega_{L,A}(R)\},
 \qquad \sigma>0.
\]
These two functions encode, respectively, the annular frequency decay obtained from
derivative bounds and the cost of multiplying by \(V\) in analytic frequency-uniform
norms.

The following definition isolates the precise hypotheses needed for the transfer
argument.  It is deliberately stated as a criterion rather than as a characterization
of all possible weights.

\begin{definition}[Transfer-admissible weights]\label{def-transfer-admissible}
Let \(L=\{L_m\}_{m\ge0}\) be an admissible weight.  We say that \(L\) is
{\it transfer-admissible} if, for every \(A>0\), the following properties hold after
changing the constants by amounts depending only on \(L\), \(A\), and the fixed
parameters of the decomposition.

\smallskip
\noindent
\((T1)\) If \(V\in\mathcal U_L^\infty(A)\), then its smooth dyadic annular pieces
\(V_q\), localized to \(|\xi|_\infty\simeq 2^q\), satisfy
\begin{align}\label{eq-transfer-admissible-T1}
 \|V_q\|_{L^\infty}
 \le
 C_A\exp\bigl(-c_A\Omega_{L,A}(c_A2^q)\bigr),
 \qquad q\ge0.
\end{align}

\smallskip
\noindent
\((T2)\) The decay in \((T1)\) implies the summability estimate
\begin{align}\label{eq-transfer-admissible-T2}
 \sum_{q\ge0}
 \exp\bigl(C\sigma 2^q-c\Omega_{L,A}(c2^q)\bigr)
 \le
 C_A\exp\bigl(C_AH_{L,A}(C_A\sigma)+C_A\sigma\bigr),
 \qquad \sigma>0.
\end{align}
The harmless polynomial overlap factors coming from the frequency-uniform blocks are
included in the constants in \eqref{eq-transfer-admissible-T2}.

\smallskip
\noindent
\((T3)\) Define the transferred weight by
\begin{align}\label{def-transferred-weight-new}
 \widetilde L_m
 :=
 \sup\left\{
 \sigma>0:
 C_AH_{L,A}(C_A\sigma)+C_A\sigma\le c_s\log(m+e)
 \right\},
 \qquad m\ge2,
\end{align}
where \(c_s>0\) is chosen sufficiently small. We also assume that the sequence
\(\widetilde L=\{\widetilde L_m\}_{m\ge2}\) is admissible.  If \(\sigma_m\le c\widetilde L_m\), then the
absorption threshold
\begin{align}\label{eq-transfer-N-sigma-general}
 N_\sigma
 :=
 C_A\exp\left(\frac{C_AH_{L,A}(C_A\sigma)+C_A\sigma}{s}\right)
\end{align}
satisfies
\begin{align}\label{eq-transfer-sublinear-condition}
 \sigma_m N_{\sigma_m}=o(m),
 \qquad m\to\infty.
\end{align}
\end{definition}

\begin{remark}
For the standard weights used below, conditions \((T1)\)--\((T3)\) are verified by
explicit one-dimensional optimizations.  For example, when $\nu\in(0,1)$, \(L_m=m^\nu\) gives \(\Omega_{L,A}(R)\simeq (R/A)^{1/(1-\nu)}\),
\(H_{L,A}(\sigma)\simeq (A\sigma)^{1/\nu}\) and hence
\(\widetilde L_m\simeq (\log(m+e))^\nu\).  Similarly,
\(L_m=(\log(m+e))^\gamma\) transfers to a double-logarithmic weight, while the
iterated-logarithm weight in Example \ref{ex-log-star-weight} is stable under this
transfer.  
\end{remark}

\begin{theorem}[Transfer criterion for ultra-analytic scales]\label{thm-transfer-scale}
Let \(s>0\), \(1\le p\le\infty\), and let \(u\in L^p(\mathbb R^n)\) solve
\[
 \Lambda^s u=Vu,
 \qquad
 \|u\|_{L^p}=1.
\]
Assume that \(L\) is transfer-admissible and that
\(V\in\mathcal U_L^\infty(A)\) for some \(A>0\).  Let \(\widetilde L\) be the
transferred weight defined by \eqref{def-transferred-weight-new}.  Then
\(u\in\mathcal U_{\widetilde L}^p\).  Equivalently, there exist constants \(B,C>0\)
such that
\begin{align}\label{eq-transfer-derivative-new}
 \|D^\alpha u\|_{L^p}
 \le
 C B^{|\alpha|}
 \frac{|\alpha|!}{\widetilde L_{|\alpha|}^{|\alpha|}},
 \qquad \alpha\in\mathbb N^n.
\end{align}
\end{theorem}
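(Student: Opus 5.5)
We follow the three-step mechanism of the proof of Theorem \ref{thm-fractional-Lp} verbatim, replacing the explicit $\kappa$-dependent functions by $\Omega_{L,A}$ and $H_{L,A}$.

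\textbf{Step 1: the multiplication estimate.} We decompose $V=\sum_{q\ge0}V_q$ into dyadic annular pieces with $\widehat{V_q}$ supported in $\{|\xi|_\infty\le C2^q\}$. For each piece, (T1) gives the bound $\|V_q\|_\infty\le C_A\exp(-c_A\Omega_{L,A}(c_A2^q))$. Lemma \ref{lem-localized-mult}, applied with $a=V_q$, $R=C2^q$ and a fixed scale $h$ (say $h=1$, or $h$ comparable to $1+A$), then gives
\[
\|V_qf\|_{\mathcal U^\sigma_{p,h}}\le C(1+2^q)^n e^{C\sigma2^q+C\sigma h}\|V_q\|_\infty\|f\|_{\mathcal U^\sigma_{p,h}}.
\]
We sum in $q$ and invoke (T2), absorbing the polynomial overlap factors as that hypothesis allows. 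This yields
\[
\|Vf\|_{\mathcal U^\sigma_{p,h}}\le \mathcal V_\sigma\|f\|_{\mathcal U^\sigma_{p,h}},\qquad \mathcal V_\sigma:=C_A\exp\bigl(C_AH_{L,A}(C_A\sigma)+C_A\sigma\bigr).
\]
No sharp constants are needed, since the statement only asks for some $B$. For this reason the dyadic decomposition suffices here, and the refined $(1+\delta)$-annuli of Lemma \ref{lem-refined-V-decay} are not required.

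\textbf{Step 2: absorption.} As in Lemma \ref{lem-Up-apriori}, we first work in the truncated norms $\mathcal U^{\sigma,L}_{p,h}$, where the constants are uniform in the truncation level. Combining the multiplication estimate with Lemma \ref{lem-Up-low-high} and $\|u\|_p=1$ gives
\[
U_\sigma\le Ce^{C\sigma N}+CN^{-s}\mathcal V_\sigma U_\sigma .
\]
We choose $N=h+N_\sigma$, with $N_\sigma$ as in \eqref{eq-transfer-N-sigma-general}, enlarging $C_A$ if needed so that $CN^{-s}\mathcal V_\sigma\le1/2$. This yields $\|u\|_{\mathcal U^\sigma_{p,h}}\le Ce^{C\sigma(h+N_\sigma)}$. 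Letting the truncation level tend to infinity removes the truncation. For $p=2$ one may alternatively use the $\mathcal G^\sigma_\infty$ route, but the frequency-uniform argument already covers all $p$.

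\textbf{Step 3: extracting derivatives.} We repeat the extraction \eqref{eq7.31}--\eqref{eq7.33}, which gives
\[
\|D^\alpha u\|_p\le C e^{C\sigma h}(m+1)^{n-1}(\sigma h)^{-n}\frac{m!}{\sigma^m}e^{C\sigma(h+N_\sigma)}.
\]
We then choose $\sigma_m:=c\widetilde L_m$ for $m\ge2$, with the constant $c$ from (T3). By (T3), $\sigma_mN_{\sigma_m}=o(m)$. Also $\sigma_m\le c\widetilde L_m$, and $\widetilde L_m\lesssim\log(m+e)$ because $C_A\sigma\le c_s\log(m+e)$ in \eqref{def-transferred-weight-new}; hence $e^{C\sigma_mh}$ and $(m+1)^{n-1}$ are $e^{o(m)}$. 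Since $\widetilde L$ is admissible, it is nondecreasing with $\widetilde L_m\ge1$, so $(\sigma_mh)^{-n}\le C$. All these factors are therefore bounded by $C2^m$ after enlarging $C$ to cover finitely many small $m$. This gives
\[
\|D^\alpha u\|_p\le C(2/c)^m\frac{m!}{\widetilde L_m^m},
\]
that is, \eqref{eq-transfer-derivative-new} with $B=2/c$. The cases $m=0,1$ follow from $\widetilde L_0=\widetilde L_1=1$ together with the estimate at a fixed $\sigma$.

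\textbf{Main obstacle.} The real analytic content is hidden in (T1)--(T3), so the main thing to check is bookkeeping. The constants produced in Step 1 (the multiplication constant $C_A$, the overlap factors, and the shift $C\sigma h$) must match the constants $C_A$ appearing in the definition of $N_\sigma$ and of $\widetilde L_m$. If they do not match directly, I would first try to absorb the discrepancy by enlarging $C_A$ and shrinking $c_s$ and $c$, using the phrase ``after changing the constants'' in Definition \ref{def-transfer-admissible}.
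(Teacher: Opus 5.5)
Your proposal is correct and follows essentially the same route as the paper's proof: the multiplication bound comes from (T1)--(T2) via Lemma~\ref{lem-localized-mult}, the low--high absorption uses $N=h+N_\sigma$, and the derivatives are extracted at $\sigma_m\simeq\widetilde L_m$ with (T3) supplying $\sigma_mN_{\sigma_m}=o(m)$. Your extra bookkeeping (truncated norms, the concrete choice $\sigma_m=c\widetilde L_m$, and $\widetilde L_m\ge1$ to control $(\sigma_mh)^{-n}$) fills in details the paper leaves implicit; one small correction is that $\widetilde L_m\lesssim 1+\log(m+e)$ should be deduced from a lower bound such as $H_{L,A}(\tau)\ge\tau-\Omega_{L,A}(1)$, not from $C_A\sigma\le c_s\log(m+e)$, because $H_{L,A}$ can be negative, but this does not affect the argument.
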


\begin{proof}
The proof is a direct abstraction of the preceding low--high frequency
argument. By \((T1)\), the derivative bounds on \(V\) imply the
annular frequency decay \eqref{eq-transfer-admissible-T1}.  Combining this decay
with the localized multiplication estimate in the frequency-uniform analytic norm
and using \((T2)\), we obtain
\begin{align}\label{eq-transfer-multiplication-bound}
 \|Vf\|_{\mathcal U^\sigma_{p,h}}
 \le
 C_A\exp\bigl(C_AH_{L,A}(C_A\sigma)+C_A\sigma\bigr)
 \|f\|_{\mathcal U^\sigma_{p,h}}.
\end{align}
Here \(h\ge1\) is the fixed frequency-uniform scale used in the proof, and all
constants are allowed to depend on the quantitative \(\mathcal U_L^\infty(A)\)-norm
of \(V\).

Set
\[
 U_\sigma:=\|u\|_{\mathcal U^\sigma_{p,h}},
 \qquad
 Y_\sigma:=\|\Lambda^s u\|_{\mathcal U^\sigma_{p,h}}.
\]
From \(\Lambda^s u=Vu\) and \eqref{eq-transfer-multiplication-bound},
\begin{align}\label{eq-transfer-Y-bound}
 Y_\sigma
 \le
 C_A\exp\bigl(C_AH_{L,A}(C_A\sigma)+C_A\sigma\bigr)U_\sigma.
\end{align}
The usual low--high frequency decomposition gives
\begin{align}\label{eq-transfer-low-high-system}
 U_\sigma
 \le
 C e^{C\sigma N}
 + C N^{-s}Y_\sigma.
\end{align}
Choose \(N=h+N_\sigma\) with $N_\sigma$ as in \eqref{eq-transfer-N-sigma-general}.  Then the second term in \eqref{eq-transfer-low-high-system}
is absorbed by the left-hand side, and hence
\begin{align}\label{eq-transfer-apriori-U}
 U_\sigma\le C\exp(C\sigma N_\sigma).
\end{align}

Let \(m=|\alpha|\ge1\).  Choose \(\sigma_m\le c\widetilde L_m\) so that
\begin{align}\label{eq-transfer-sigma-choice}
 C_AH_{L,A}(C_A\sigma_m)+C_A\sigma_m\le c_s\log(m+e).
\end{align}
Then \eqref{eq-transfer-N-sigma-general} gives \(N_{\sigma_m}\le C(m+e)^q\) for
some \(q<1\), provided \(c_s\) is sufficiently small.  By \((T3)\),
\[ \sigma_mN_{\sigma_m}=o(m). \]
The derivative extraction from the frequency-uniform analytic norm is the same as
in the proof of Theorem \ref{thm-general-phi}: for every fixed \(\eta>0\),
\[
 \|D^\alpha u\|_p
 \le
 C_{\eta,n}e^{C\sigma h}(m+1)^{n-1}\frac{1}{(\sigma h)^n} 
 \frac{m!}{\sigma^m}
 \|u\|_{\mathcal U^\sigma_{p,h}}.
\]
Applying this with \(\sigma=\sigma_m\), and using
\eqref{eq-transfer-apriori-U}, we obtain
\[
 \|D^\alpha u\|_p
 \le
 C_{\eta,n}e^{C\sigma_mh}(m+1)^{n-1}\frac{1}{(\sigma_m h)^n}  
 \frac{m!}{\sigma_m^m}e^{C\sigma_mN_{\sigma_m}}.
\]
The factors \(e^{C\sigma_mh}\), \((m+1)^{n-1}\), \((\sigma_m h)^{-n}\), and
\(e^{C\sigma_mN_{\sigma_m}}\) are subexponential in \(m\).  They can therefore be
absorbed into a larger base constant \(B^m\).  Since
\(\sigma_m\simeq\widetilde L_m\), this gives \eqref{eq-transfer-derivative-new}.
The case \(m=0\) follows from \(\|u\|_p=1\).  The theorem is proved.
\end{proof}
 
As direct consequences of the transfer principle, we have the following results.

\begin{corollary}[Polynomial ultra-analytic scales]
Let \(L_m=m^\nu\), \(0<\nu\le1\).  This weight is transfer-admissible and
\(\widetilde L_m\simeq (\log(m+e))^\nu\).  Hence, if
\(V\in\mathcal U_L^\infty\), then every normalized solution of \(\Lambda^s u=Vu\)
satisfies
\begin{align}\label{eq-cor-polynomial-scale-bound}
 \|D^\alpha u\|_p
 \le
 C B^{|\alpha|}
 \frac{|\alpha|!}{(\log(|\alpha|+e))^{\nu|\alpha|}}.
\end{align}
This recovers the logarithmic ultra-analytic estimate of Theorem
\ref{thm-fractional-Lp}.
\end{corollary}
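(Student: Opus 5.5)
The corollary follows from Theorem \ref{thm-transfer-scale} once we check that $L_m=m^\nu$ satisfies (T1)--(T3) and compute $\widetilde L_m$. The case $\nu=1$ needs separate handling. There $\mathcal U_L^\infty$ means $\|D^\alpha V\|_\infty\le CA^{|\alpha|}|\alpha|!/|\alpha|^{|\alpha|}\lesssim (eA)^{|\alpha|}$, which is assumption \eqref{assump-A-derivative} with $\kappa=0$. Then (T1) is immediate from Lemma \ref{lem-refined-V-decay}, since $\widehat V$ is compactly supported. The conclusion for $\nu=1$ is therefore exactly Theorem \ref{thm-fractional-Lp} with $\kappa=0$, and we would simply cite it.

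For $0<\nu<1$ we first reduce to the same theorem. By Stirling, $m!/m^{\nu m}\le C^m (m!)^{1-\nu}$, and since $|\alpha|!\le n^{|\alpha|}\alpha!$, the hypothesis $V\in\mathcal U_L^\infty(A)$ gives \eqref{assump-A-derivative} with $\kappa=1-\nu$, $M=C_nA$, and $A_0=\|V\|_{\mathcal U_L^\infty(A)}$. Theorem \ref{thm-fractional-Lp} then gives the bound with denominator $(\log(|\alpha|+e))^{(1-\kappa)|\alpha|}=(\log(|\alpha|+e))^{\nu|\alpha|}$, which is \eqref{eq-cor-polynomial-scale-bound}. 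So the estimate itself needs no new machinery. This shortcut also explains the phrase ``recovers Theorem \ref{thm-fractional-Lp}'': the two statements coincide under $\kappa=1-\nu$.

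It remains to justify the structural claims: transfer-admissibility and $\widetilde L_m\simeq(\log(m+e))^\nu$. We would compute $\Omega_{L,A}(R)=\sup_N\{N\log(RN^\nu/A)-\log N!\}$. By Stirling the maximand is $N\log(R/A)-(1-\nu)N\log N+N+O(\log N)$, which is maximized at $N^{1-\nu}\simeq R/A$. This gives $\Omega_{L,A}(R)=(1-\nu)(R/A)^{1/(1-\nu)}+O(\log R)$.

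With this, (T1) is the content of Lemma \ref{lem-refined-V-decay}: the infimum over $N$ there is precisely $\exp(-\Omega)$ up to polynomial factors, and dyadic pieces are sums of boundedly many refined pieces. (T2) is Lemma \ref{lem-lacunary-summation} with ratio $2$ instead of $1+\delta$. Since only constants are claimed, the sharp coefficient is irrelevant, and the Legendre dual is $H_{L,A}(\sigma)\simeq (A\sigma)^{1/\nu}$. Solving $C_AH(C_A\sigma)+C_A\sigma\le c_s\log(m+e)$ then gives $\widetilde L_m\simeq (\log(m+e))^\nu$, which is admissible after normalizing the first terms.

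For (T3), if $\sigma_m\le c\widetilde L_m$ then the exponent defining $N_{\sigma_m}$ is at most $(c_s/s)\log(m+e)$. Hence $N_{\sigma_m}\lesssim (m+e)^{c_s/s}$ with $c_s<s$, and $\sigma_mN_{\sigma_m}\lesssim(\log m)^\nu m^{q}=o(m)$.

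The only delicate point is keeping the $O(\log R)$ and polynomial-overlap errors from spoiling the exponential comparison in (T1)--(T2). As in the proof of Lemma \ref{lem-refined-V-decay}, we would absorb them by slightly shrinking the constant $c_A$ in front of $\Omega$. We expect this to be harmless, since only constants, not the sharp coefficient, are claimed here.
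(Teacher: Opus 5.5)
Your proposal is correct, but you derive the estimate by a different route from the paper. The paper treats the corollary as a direct instance of Theorem~\ref{thm-transfer-scale}. It verifies (T1)--(T3) for $L_m=m^\nu$ through the one-dimensional optimizations recorded in the remark preceding that theorem, namely $\Omega_{L,A}(R)\simeq(R/A)^{1/(1-\nu)}$, $H_{L,A}(\sigma)\simeq(A\sigma)^{1/\nu}$, and hence $\widetilde L_m\simeq(\log(m+e))^\nu$. It then runs the abstract absorption argument. You instead note that, by Stirling and $|\alpha|!\le n^{|\alpha|}\alpha!$, the hypothesis $V\in\mathcal U_L^\infty$ is just \eqref{assump-A-derivative} with $\kappa=1-\nu$ (and $\kappa=0$ when $\nu=1$). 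So \eqref{eq-cor-polynomial-scale-bound} follows from Theorem~\ref{thm-fractional-Lp}, and you check the structural claims separately by the same optimizations the paper uses.

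Your route has three advantages. The estimate rests on the fully proved fractional theorem rather than on the more schematic proof of Theorem~\ref{thm-transfer-scale}. It gives an explicit type $B$ in terms of $A$ and the $\mathcal U_L^\infty(A)$-norm of $V$. And since the reverse bound $(\alpha!)^{1-\nu}\le C^{|\alpha|}|\alpha|!/|\alpha|^{\nu|\alpha|}$ also holds, it shows that the corollary and Theorem~\ref{thm-fractional-Lp} are equivalent up to constants. The price is that ``recovers'' can then no longer mean an independent rederivation. That is what the paper's route offers: the general transfer mechanism reproduces the fractional estimate (without the sharp type), and the same mechanism also handles the logarithmic and $\log^*$ weights.

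Two small points in your structural part, neither affecting the conclusion:
\begin{itemize}
\item The maximum defining $\Omega_{L,A}$ is $(1-\nu)e^{\nu/(1-\nu)}(R/A)^{1/(1-\nu)}+O(\log R)$, not $(1-\nu)(R/A)^{1/(1-\nu)}+O(\log R)$. This is harmless since only $\simeq$ is used.
\item For $\nu=1$ you never verify $\widetilde L_m\simeq\log(m+e)$. This is immediate: $\Omega_{L,A}(R)=+\infty$ once $R>A/e$, so $H_{L,A}$ grows at most linearly and the term $C_A\sigma$ in \eqref{def-transferred-weight-new} determines $\widetilde L_m$.
\end{itemize}
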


\begin{corollary}[Logarithmic-to-double-logarithmic transfer]
Let \(L_m=(\log(m+e))^\gamma\), \(\gamma>0\).  This weight is transfer-admissible and
\(\widetilde L_m\simeq(\log\log(m+e^e))^\gamma\).  Hence, if
\(V\in\mathcal U_L^\infty\), then the solution satisfies
\begin{align*}
 \|D^\alpha u\|_p
 \le
 C B^{|\alpha|}
 \frac{|\alpha|!}{(\log\log(|\alpha|+e^e))^{\gamma|\alpha|}}.
\end{align*}
Thus a logarithmic ultra-analytic coefficient generally gives a double-logarithmic
ultra-analytic solution.
\end{corollary}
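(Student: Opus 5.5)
The plan is to apply Theorem \ref{thm-transfer-scale} with $L_m=(\log(m+e))^\gamma$. This requires two things: verifying that this weight is transfer-admissible, and computing the transferred weight $\widetilde L_m$ from \eqref{def-transferred-weight-new}. We first compute the associated function $\Omega_{L,A}$. For $N\ge1$ we have
\[
N\log\Bigl(\frac{R(\log(N+e))^\gamma}{A}\Bigr)-\log N!
= N\log\frac{R}{A}+\gamma N\log\log(N+e)-N\log N+N+O(\log N).
\]
Differentiating in $N$, the maximizer satisfies $\log N\approx\log(R/A)+\gamma\log\log N$, that is, $N_*\simeq (R/A)(\log(R/A))^\gamma$. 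Hence
\[
\Omega_{L,A}(R)\simeq N_*\simeq \frac{R}{A}\Bigl(\log\frac RA\Bigr)^\gamma,
\]
up to lower-order corrections in the exponent. The dual is then
\[
H_{L,A}(\sigma)=\sup_R\Bigl\{\sigma R-\tfrac RA(\log R)^\gamma\Bigr\}.
\]
The supremum is attained when $(\log R)^\gamma\approx A\sigma$, i.e.\ $R\approx\exp((A\sigma)^{1/\gamma})$, so
\[
H_{L,A}(\sigma)\simeq \exp\bigl(c(A\sigma)^{1/\gamma}\bigr).
\]

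We next check the three conditions. For $(T1)$, we repeat the proof of Lemma \ref{lem-refined-V-decay} on dyadic annuli: factor out $\xi_\ell^N$ and bound $\|D^NV\|_\infty\le CA^NN!/L_N^N$. Optimizing over $N$ gives the decay $\exp(-\Omega_{L,A}(c2^q))$, with the polynomial factors $(1+N)^C$ absorbed by a constant $c_A<1$. For $(T2)$, we argue as in Lemma \ref{lem-lacunary-summation}. We split $\Omega=(1-\delta)\Omega+\delta\Omega$; the first part is controlled by $\sup_R\{\sigma R-c\Omega(cR)\}$, which is at most $C_AH_{L,A}(C_A\sigma)$ after a rescaling of constants (dilations of $R$ and $\sigma$ only change the constant $c$ inside $\exp(c(A\sigma)^{1/\gamma})$). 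The second part, $\sum_q e^{-\delta\Omega(c2^q)}$, converges because $\Omega$ grows superlinearly.

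For $(T3)$, the defining condition $C_AH(C_A\sigma)+C_A\sigma\le c_s\log(m+e)$ becomes $\exp(c'\sigma^{1/\gamma})\lesssim c_s\log m$. This gives
\[
\widetilde L_m\simeq(\log\log(m+e^e))^\gamma,
\]
which is admissible after the normalization $\widetilde L_0=\widetilde L_1=1$ (a finite adjustment absorbed into constants). With $\sigma_m\le c\widetilde L_m$ we get $N_{\sigma_m}\le C_A\exp(c_s\log(m+e)/s)=C(m+e)^{c_s/s}$, so $\sigma_mN_{\sigma_m}\lesssim(\log\log m)^\gamma(m+e)^{c_s/s}=o(m)$ once $c_s<s$.

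Theorem \ref{thm-transfer-scale} then yields the stated bound directly. The main obstacle is keeping track of constants in $(T2)$: the multiplicative constants $c_A, C_A$ sit inside an exponential-type $H$, so they must be shown to affect only the constant in $\exp(c\sigma^{1/\gamma})$, which in turn only rescales $\widetilde L_m$ by a constant factor. That factor is absorbed into $B$, and we accept losing the leading-coefficient sharpness since the corollary claims only $\simeq$.
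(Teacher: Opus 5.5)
Your proposal is correct and follows the paper's route. The paper states this corollary as a direct consequence of Theorem~\ref{thm-transfer-scale}, with (T1)--(T3) left to the one-dimensional optimizations mentioned in the preceding remark, and you supply those details: $\Omega_{L,A}(R)\simeq (R/A)(\log(R/A))^\gamma$ and $\log H_{L,A}(\sigma)\simeq \sigma^{1/\gamma}$, hence $\widetilde L_m\simeq(\log\log(m+e^e))^\gamma$ and $\sigma_mN_{\sigma_m}\lesssim (\log\log m)^\gamma (m+e)^{c_s/s}=o(m)$ for $c_s<s$. You also handle the constants correctly: $c_A,C_A$ only change the constant in the exponent of $H_{L,A}$, so they rescale $\widetilde L_m$ by a bounded factor that is absorbed into $B$.
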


\subsection{An invariant family generated by logarithmically stable weights}
Theorem~\ref{thm-transfer-scale} shows that, under the fractional equation, the coefficient 
scale \(L_m\) is essentially transferred to the solution scale \(L_{\lfloor c\log(m+e)\rfloor}\). 
Therefore, a scale \(L\) yields an invariant class when it is stable, up to harmless 
constants, under the logarithmic shift \(m\mapsto\log m\). The following definition makes 
this precise.

\begin{definition}[Logarithmic stability]
A transfer-admissible weight \(L\) is called {\it logarithmically stable} if its
transferred weight satisfies
\begin{align}\label{eq-log-stability-new}
 \widetilde L_m\ge c_1L_m,
 \qquad m\ge m_0,
\end{align}
for some constants \(c_1>0\) and \(m_0\ge2\).
\end{definition}

\begin{corollary}[Logarithmically stable classes are invariant]\label{cor-log-stable}
Assume that \(L\) is logarithmically stable.  If \(V\in\mathcal U_L^\infty\), then
every normalized solution of \(\Lambda^s u=Vu\) satisfies
\[
 u\in\mathcal U_L^p.
\]
\end{corollary}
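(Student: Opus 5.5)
This follows directly from Theorem~\ref{thm-transfer-scale} once we compare the two weights. Let \(V\in\mathcal U_L^\infty\), so \(V\in\mathcal U_L^\infty(A)\) for some \(A>0\). Since a logarithmically stable weight is transfer-admissible by definition, Theorem~\ref{thm-transfer-scale} applies. It produces constants \(B,C>0\) such that
\[
 \|D^\alpha u\|_{L^p}\le C B^{|\alpha|}\frac{|\alpha|!}{\widetilde L_{|\alpha|}^{|\alpha|}},\qquad \alpha\in\mathbb N^n .
\]
It remains to replace \(\widetilde L_{|\alpha|}\) by \(L_{|\alpha|}\) in the denominator, at the cost of enlarging \(B\) and \(C\).

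For \(m=|\alpha|\ge m_0\), the stability condition \eqref{eq-log-stability-new} gives \(\widetilde L_m\ge c_1L_m\), hence
\[
\widetilde L_m^{-m}\le c_1^{-m}L_m^{-m}.
\]
Therefore on this range
\[
\|D^\alpha u\|_p\le C(B/c_1)^{m}\frac{m!}{L_m^{m}}.
\]
The factor \(c_1^{-m}\) is purely geometric and is absorbed into the new base \(B'=\max(B/c_1,B)\).

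For the finitely many orders \(m<m_0\), we argue separately. The case \(m=0\) is just \(\|u\|_p=1\). For \(1\le m<m_0\), we use the \(m\)-th instance of the transferred bound, which makes \(\|D^\alpha u\|_p\) finite. Since \(L_m\ge1\) and \(\widetilde L_m\) is positive for these \(m\) (the transferred sequence is admissible, assumed in (T3)), the ratio \(L_m^m/\widetilde L_m^m\) is a finite number. We bound it by \(\max_{m<m_0}(L_m/\widetilde L_m)^m\) and put this into \(C\).

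Combining the two ranges gives \(\|D^\alpha u\|_p\le C'(B')^{|\alpha|}|\alpha|!/L_{|\alpha|}^{|\alpha|}\) for all \(\alpha\), that is, \(u\in\mathcal U_L^p(B')\subset\mathcal U_L^p\). There is no real obstacle here. The only point needing care is the bookkeeping at small orders: the indices \(m=0,1\), where \(\widetilde L\) is defined only for \(m\ge2\) and the convention \(L_0^0=1\) is used, are handled directly by the normalization and the finite-order bounds.
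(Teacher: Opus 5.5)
Your proposal is correct and is exactly the short deduction the paper intends; the paper states the corollary without a separate proof, as a direct consequence of Theorem~\ref{thm-transfer-scale}. You obtain $u\in\mathcal U_{\widetilde L}^p$, use $\widetilde L_m\ge c_1L_m$ for $m\ge m_0$ to absorb $c_1^{-m}$ into the base, and absorb the finitely many orders $m<m_0$ into the constant.

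One point is worth stating explicitly: $\widetilde L$ depends on $A$ through $C_A$ and $H_{L,A}$. The stability condition \eqref{eq-log-stability-new} should therefore be read for the transferred weight attached to the $A$ with $V\in\mathcal U_L^\infty(A)$. This is an imprecision in the paper's definition, not in your argument.
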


\begin{example}[The iterated-logarithm weight]\label{ex-log-star-weight}
We now give a detailed example of a logarithmically stable weight.  Fix a threshold
\(C_*>1\).  For \(m>C_*\), define
\[
 \log^*m
 :=
 \min\left\{k\in\mathbb N_0:\log^{\circ k}m\le C_*\right\},
\]
where \(\log^{\circ0}m=m\), \(\log^{\circ1}m=\log m\), and
\(\log^{\circ k}\) denotes the \(k\)-fold composition of \(\log\) with itself.  For
\(1\le m\le C_*\) set \(\log^*m=0\).  Thus \(\log^*m\) is the number of logarithms
needed to bring \(m\) below the fixed threshold \(C_*\).

For example, if
\[
 m=\exp(\exp(\exp C_*)),
\]
then
\[
 \log m=\exp(\exp C_*),
 \qquad
 \log\log m=\exp C_*,
 \qquad
 \log^{\circ3}m=C_*.
\]
Hence \(\log^*m=3\).  This illustrates the extremely slow growth of \(\log^*m\):
adding one level to the exponential tower increases \(\log^*m\) by only one.

Now define
\begin{align}\label{eq-logstar-weight-new}
 L_m:=(1+\log^*m)^\gamma,
 \qquad \gamma>0.
\end{align}
Indeed, Stirling's formula in the definition
of \(\Omega_{L,A}\) gives the one-dimensional optimization
\[
        \Omega_{L,A}(R)\simeq_A R L_R .
\]
For the iterated-logarithm weight this follows from the slow-variation property
\[
        L_{CRL_R}\simeq_A L_R,
        \qquad R\ge2.
\]
Hence the transferred weight satisfies
\[
        \widetilde L_m\simeq_A L_{\lfloor c_A\log(m+e)\rfloor}.
\]
But \( \log^*(\log(m+e))=\log^*(m+e)-1+O(1)\) and replacing \(\log(m+e)\) by \(c_A\log(m+e)\) changes \(\log^*\) only by a
bounded additive amount.  Therefore,
\[
        \widetilde L_m
        \simeq_A
        L_{\lfloor c_A\log(m+e)\rfloor}
        \simeq_A
        L_m .
\]
Thus the weight \eqref{eq-logstar-weight-new} is logarithmically stable. The
corresponding class \(\mathcal U_L^p\) consists of functions satisfying, for some
\(A,C>0\),
\[
 \|D^\alpha f\|_{L^p}
 \le
 C A^{|\alpha|}
 \frac{|\alpha|!}{(1+\log^*|\alpha|)^{\gamma|\alpha|}},
\]
with the usual harmless convention for \(|\alpha|\le1\).  This class is much weaker
than the class with denominator \((\log |\alpha|)^{\gamma|\alpha|}\), but unlike the
latter it is invariant under the fractional equation.
\end{example}

\begin{proof}[Proof of Theorem \ref{thm-concrete-invariant-class}]
Take \(L_m=(1+\log^*(m+e))^\gamma\). By Example~\ref{ex-log-star-weight},
\(L\) is transfer-admissible and logarithmically stable. The conclusion follows
from Corollary~\ref{cor-log-stable}.
\end{proof}

Let \(\mathfrak L_{\mathrm{inv}}\) be the collection of transfer-admissible weights
satisfying the logarithmic stability condition \eqref{eq-log-stability-new}.  For
\(1\le p\le\infty\) define
\[
 \mathcal U^p_{\mathrm{inv}}
 :=
 \bigcup_{L\in\mathfrak L_{\mathrm{inv}}}\mathcal U_L^p.
\]

\begin{theorem}[Invariant family generated by logarithmically stable weights]
\label{thm-invariant-family}
Let \(s>0\), \(1\le p\le\infty\) and let \(u\in L^p(\mathbb R^n)\) solve
\[
 \Lambda^s u=Vu,
 \qquad
 \|u\|_{L^p}=1.
\]
If \(V\in\mathcal U_{\mathrm{inv}}^\infty\), then
\[
 u\in\mathcal U_{\mathrm{inv}}^p.
\]
Moreover, every logarithmically stable transfer-admissible class
\(\mathcal U_L^p\) is invariant under the fractional pure-potential equation.
\end{theorem}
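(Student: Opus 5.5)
The theorem is a formal consequence of Corollary~\ref{cor-log-stable} and the definition of $\mathcal U^p_{\mathrm{inv}}$ as a union, so the plan is to unwind the union and apply the corollary to a single weight. Suppose $V\in\mathcal U^\infty_{\mathrm{inv}}$. By definition there is a weight $L\in\mathfrak L_{\mathrm{inv}}$ with $V\in\mathcal U_L^\infty$. That is, $L$ is transfer-admissible, satisfies \eqref{eq-log-stability-new}, and there exist $A,C>0$ with
\[
\|D^\alpha V\|_{L^\infty}\le CA^{|\alpha|}\frac{|\alpha|!}{L_{|\alpha|}^{|\alpha|}}
\]
for every $\alpha\in\mathbb N^n$. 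Since $V\in\mathcal U_L^\infty(A)$ for this $A$, Theorem~\ref{thm-transfer-scale} applies. It gives $u\in\mathcal U^p_{\widetilde L}$, i.e.\ \eqref{eq-transfer-derivative-new} with some constants $B,C$.

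The next step is to pass from $\widetilde L$ back to $L$ using logarithmic stability. For $m\ge m_0$ we have $\widetilde L_m\ge c_1L_m$, hence
\[
\frac{1}{\widetilde L_m^{\,m}}\le \frac{c_1^{-m}}{L_m^{\,m}},
\]
and therefore
\[
\|D^\alpha u\|_{L^p}\le C\,(B/c_1)^{|\alpha|}\frac{|\alpha|!}{L_{|\alpha|}^{|\alpha|}}
\qquad\text{for } |\alpha|\ge m_0 .
\]
For the finitely many orders $|\alpha|<m_0$, the bound $\|D^\alpha u\|_{L^p}<\infty$ comes from the same Theorem~\ref{thm-transfer-scale} estimate at those orders. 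Since $L_m\ge1$ is bounded on this finite range, these orders are absorbed by enlarging $C$. This yields $u\in\mathcal U^p_L$.

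Because the same $L$ lies in $\mathfrak L_{\mathrm{inv}}$, we conclude $u\in\mathcal U^p_L\subset\mathcal U^p_{\mathrm{inv}}$. The "moreover" clause is exactly the statement just proved for a fixed $L$, namely Corollary~\ref{cor-log-stable}, read as the mapping property $\mathcal U_L^\infty\ni V\Rightarrow u\in\mathcal U_L^p$.

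The one point needing care is that the transferred weight $\widetilde L$, and hence the constants in \eqref{eq-log-stability-new}, may depend on $A$ through \eqref{def-transferred-weight-new}. The stability hypothesis must therefore be used for the particular $A$ for which $V\in\mathcal U_L^\infty(A)$. Definition~\ref{def-transfer-admissible} requires (T1)--(T3) for every $A>0$, so I would read logarithmic stability as holding, with $A$-dependent $c_1,m_0$, for each $A$. This is the reading the iterated-logarithm example supports, and I would state it explicitly. Beyond this bookkeeping there is no genuine obstacle: all the analysis is already contained in Theorem~\ref{thm-transfer-scale}.
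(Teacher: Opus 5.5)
Your proposal is correct and matches the paper's proof: unwind the union defining $\mathcal U^p_{\mathrm{inv}}$, pick $L\in\mathfrak L_{\mathrm{inv}}$ with $V\in\mathcal U_L^\infty$, and apply Corollary~\ref{cor-log-stable}. Beyond that, you also derive the corollary from Theorem~\ref{thm-transfer-scale} (via $\widetilde L_m\ge c_1L_m$ for $m\ge m_0$ and absorbing finitely many low orders), and your point that stability must hold for the particular $A$ with $V\in\mathcal U_L^\infty(A)$ fairly clarifies something the paper leaves implicit.
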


\begin{proof}
If \(V\in\mathcal U_{\mathrm{inv}}^\infty\), then \(V\in\mathcal U_L^\infty\) for some
\(L\in\mathfrak L_{\mathrm{inv}}\).  By Corollary \ref{cor-log-stable}, the solution
belongs to \(\mathcal U_L^p\), hence to \(\mathcal U_{\mathrm{inv}}^p\).
\end{proof}

\section*{Acknowledgements}
 H.D. was partially supported by the NSF under agreement DMS-2350129. Y.H. was partially supported by the National Key R\&D Program of China (2022YFA1006800), the National Natural Science Foundation of China (Grant No.~12471205), the Science and Technology 
Innovation Program of Hunan Province (2025RC1005), the Hunan Basic Science Research Center for Mathematical Analysis (2024JC2002), and the Key Project of the Hunan Provincial Education Department (24A0004). M.W. was partially supported by the National Natural Science Foundation of China (No.~12571260), the Natural Science Foundation of Hunan Province (No.~2026JJ20012), and the Hunan Basic Science Research Center for Mathematical Analysis (2024JC2002).

\normalem


\begin{thebibliography}{99}

\bibitem{ApraizEscauriaza14}
J. Apraiz, L. Escauriaza, G. Wang, and C. Zhang,
\newblock Observability inequalities and measurable sets,
\newblock \emph{J. Eur. Math. Soc. (JEMS)} \textbf{16} (2014), no. 11, 2433--2475.

\bibitem{BaeBiswas15}
H. Bae and A. Biswas,
\newblock Gevrey regularity for a class of dissipative equations with analytic nonlinearity,
\newblock \emph{Methods Appl. Anal.} \textbf{22} (2015), no. 4, 377--408.

\bibitem{BaeBiswasTadmor12}
H. Bae, A. Biswas, and E. Tadmor,
\newblock Analyticity and decay estimates of the Navier--Stokes equations in critical Besov spaces,
\newblock \emph{Arch. Ration. Mech. Anal.} \textbf{205} (2012), no. 3, 963--991.

\bibitem{BeauchardJamingPravdaStarov21}
K. Beauchard, P. Jaming, and K. Pravda-Starov,
\newblock Spectral estimates for finite combinations of Hermite functions and
null-controllability of hypoelliptic quadratic equations,
\newblock \emph{Studia Math.} \textbf{260} (2021), 1--43.

\bibitem{Biswas12}
A. Biswas,
\newblock Gevrey regularity for a class of dissipative equations with applications to decay,
\newblock \emph{J. Differential Equations} \textbf{253} (2012), no. 10, 2739--2764.


\bibitem{BiswasMartinezSilva15}
A. Biswas, V. R. Martinez, and P. Silva,
\newblock On Gevrey regularity of the supercritical SQG equation in critical Besov spaces,
\newblock \emph{J. Funct. Anal.} \textbf{269} (2015), no. 10, 3083--3119.

\bibitem{Bourgain05}
J. Bourgain,
\newblock \emph{Green's Function Estimates for Lattice Schr\"odinger Operators and Applications},
\newblock Annals of Mathematics Studies, vol. 158,
\newblock Princeton University Press, Princeton, NJ, 2005.

\bibitem{BradshawGrujicKukavica15}
Z. Bradshaw, Z. Gruji\'c, and I. Kukavica,
\newblock Local analyticity radii of solutions to the 3D Navier--Stokes equations with locally analytic forcing,
\newblock \emph{J. Differential Equations} \textbf{259} (2015), no. 8, 3955--3975.



\bibitem{CappielloToft2017}
M. Cappiello and J. Toft,
\newblock Pseudo-differential operators in a Gelfand--Shilov setting,
\newblock \emph{Math. Nachr.} 290 (2017), 738--755.

\bibitem{DjakovMityaginEntire2003}
P. Djakov and B. Mityagin,
\newblock Spectral gaps of the periodic Schr\"odinger operator when its
potential is an entire function,
\newblock \emph{Adv. in Appl. Math.} 31 (2003), 562--596.

\bibitem{DongKim12}
H. Dong and D. Kim,
\newblock On \(L_p\)-estimates for a class of non-local elliptic equations,
\newblock \emph{J. Funct. Anal.} \textbf{262} (2012), no.~3, 1166--1199.

\bibitem{DongRyu26}
H. Dong and J. Ryu,
\newblock \(L_p\)-estimates for nonlocal equations with general L\'evy measures,
\newblock arXiv:2512.24704, 2025.

\bibitem{DongWang24}
H. Dong and M. Wang,
\newblock Log-type ultra-analyticity of elliptic equations with gradient terms,
\newblock \emph{SIAM J. Math. Anal.} \textbf{57} (2025), no. 4, 4609--4630.

\bibitem{DonnellyFefferman88}
H. Donnelly and C. Fefferman,
\newblock Nodal sets of eigenfunctions on Riemannian manifolds,
\newblock \emph{Invent. Math.} \textbf{93} (1988), no. 1, 161--183.

\bibitem{EscauriazaMontanerZhang15}
L. Escauriaza, S. Montaner, and C. Zhang,
\newblock Observation from measurable sets for parabolic analytic evolutions and applications,
\newblock \emph{J. Math. Pures Appl. (9)} \textbf{104} (2015), no. 5, 837--867.

\bibitem{FoiasTemam89}
C. Foias and R. Temam,
\newblock Gevrey class regularity for the solutions of the Navier--Stokes equations,
\newblock \emph{J. Functional Analysis} \textbf{87} (1989), no. 2, 359--369.

\bibitem{Hardy33}
G. H. Hardy,
\newblock A theorem concerning Fourier transforms,
\newblock \emph{J. London Math. Soc.} \textbf{8} (1933), 227--231.

\bibitem{Hoermander85}
L. H\"ormander,
\newblock \emph{The Analysis of Linear Partial Differential Operators III: Pseudo-Differential Operators},
\newblock Springer-Verlag, Berlin, 1985.

\bibitem{KarpeshinaParnovskiShterenberg26}
Y. Karpeshina, L. Parnovski, and R. Shterenberg,
\newblock Bethe--Sommerfeld conjecture and absolutely continuous spectrum of
multi-dimensional quasi-periodic Schr\"odinger operators,
\newblock \emph{Ann. of Math. (2)} \textbf{203} (2026), no. 2, 349--470.

\bibitem{KukavicaVicol09}
I. Kukavica and V. Vicol,
\newblock On the radius of analyticity of solutions to the three-dimensional Euler equations,
\newblock \emph{Proc. Amer. Math. Soc.} \textbf{137} (2009), no. 2, 669--677.

\bibitem{Li24}
D. Li,
\newblock Optimal Gevrey regularity for supercritical quasi-geostrophic equations,
\newblock \emph{Comm. Math. Phys.} \textbf{405} (2024), article no. 30.

\bibitem{LiWang21}
Z. Li and M. Wang,
\newblock Observability inequality at two time points for KdV equations,
\newblock \emph{SIAM J. Math. Anal.} \textbf{53} (2021), 1944--1957.

\bibitem{Lopes2017GelfandShilov}
P. T. P. Lopes,
\newblock Gelfand--Shilov regularity of SG boundary value problems,
\newblock \emph{J. Pseudo-Differ. Oper. Appl.} 8 (2017), no.~1, 55--81.

\bibitem{MorreyNirenberg57}
C. B. Morrey and L. Nirenberg,
\newblock On the analyticity of the solutions of linear elliptic systems of partial differential equations,
\newblock \emph{Comm. Pure Appl. Math.} \textbf{10} (1957), 271--290.

\bibitem{PaicuVicol11} M. Paicu and V. Vicol, \newblock Analyticity and Gevrey-class regularity for the second-grade fluid equations, \newblock \emph{J. Math. Fluid Mech.} \textbf{13} (2011), 533--555.

\bibitem{Poschel2011HillWeighted}
J. P\"oschel,
\newblock Hill's potentials in weighted Sobolev spaces and their spectral gaps,
\newblock \emph{Math. Ann.} 349 (2011), no.~2, 433--458.

\bibitem{Rodino93}
L. Rodino,
\newblock \emph{Linear Partial Differential Operators in Gevrey Spaces},
\newblock World Scientific, Singapore, 1993.

\bibitem{Trubowitz1977}
E. Trubowitz,
\newblock The inverse problem for periodic potentials,
\newblock \emph{Comm. Pure Appl. Math.} 30 (1977), 321--342.

\bibitem{WangWangZhang19}
G. Wang, M. Wang, and Y. Zhang,
\newblock Observability and unique continuation inequalities for the Schr\"odinger equation,
\newblock \emph{J. Eur. Math. Soc. (JEMS)} \textbf{21} (2019), 3513--3572.

\bibitem{WangZhang23}
M. Wang and C. Zhang,
\newblock Analyticity and observability for fractional order parabolic equations in the whole space,
\newblock \emph{ESAIM Control Optim. Calc. Var.} \textbf{29} (2023), Paper No. 63, 22 pp.

\end{thebibliography}
\end{document}